\theoremstyle{comment}
\newtheorem*{mcomment}{\color{cyan}{Comment}}
\DeclareMathOperator{\Sp}{Sp}
\DeclareMathOperator{\Orth}{O}
\newcommand{\Fk}{\Bbb{F}_{2^k}}
\newtheorem{theorem}{Theorem}[section]
\newtheorem{proposition}[theorem]{Proposition}
\theoremstyle{definition}
\begin{document}

\title{String C-group representations of alternating groups}

\author{Maria Elisa Fernandes}
\address{Maria Elisa Fernandes, Department of Mathematics,
University of Aveiro,
Aveiro,
Portugal}
\email{maria.elisa@ua.pt}

\author{Dimitri Leemans}
\address{Dimitri Leemans, Universit\'{e} Libre de Bruxelles,
D\'{e}partement de Math\'{e}matique,
C.P.216 Alg\`ebre et Combinatoire,
Bld du Triomphe, 1050 Bruxelles,
Belgium}
\email{dleemans@ulb.ac.be}

\maketitle
\begin{abstract}
We prove that for any integer $n\geq 12$, and for every $r$ in the interval $[3, \ldots, \lfloor (n-1)/2\rfloor]$, the group $A_n$ has a string C-group representation of rank $r$ therefore showing that the only alternating group whose set of ranks is not an interval is $A_{11}$.
\end{abstract}

\noindent \textbf{Keywords:} abstract regular polytopes, Coxeter groups, alternating groups, string C-groups.

\noindent \textbf{2000 Math Subj. Class:} 52B11, 20D06.

\section{Introduction}
String C-group representations have gained much attention in recent years as they are in one-to-one correspondence with abstract regular polytopes.
More precisely, it is known that string C-groups are automorphism
groups of abstract regular polytopes and that, given an abstract regular polytope and a base flag of the polytope, one can construct a string C-group whose group $G$ is the automorphism group of the polytope~\cite[Section 2E]{arp}. Hence the study of string C-groups has interest not only for group theory, but also for geometry.  

Classifications of string C-groups from almost simple groups started with experimental work of Leemans and Vauthier~\cite{atlasl} and also Hartley~\cite{Halg}. These were pushed further in~\cite{HHalg, LMalg, CLM2012}.
The results obtained in~\cite{atlasl} quickly led to the determination of the highest rank of a string C-group representation for Suzuki groups~\cite{Leemans:2006}. Other families of almost simple groups were then investigated: the almost simple groups with socle $\mathrm{PSL}(2,q)$~\cite{ls07,ls09,DiJuTho}, groups $\mathrm{PSL}(3,q)$ and $\mathrm{PGL}(3,q)$~\cite{Brooksbank:2010}, groups $\mathrm{PSL}(4,q)$~\cite{Brooksbank:2015}, small Ree groups~\cite{Leemans:2015},
orthogonal and symplectic groups in characteristic 2,
 and finally, symmetric groups~\cite{fl} and alternating groups~\cite{flm,flm2}. In particular, only the last four families gave rise to string C-group representations of arbitrary large rank. 
 In~\cite{Brooksbank:2018}, it is shown that, for all integers $m\geq 2$, and all integers $k\geq 2$, 
the orthogonal groups $\Orth^{\pm}(2m,\Fk)$ act on abstract regular polytopes 
of rank $2m$, and the symplectic groups $\Sp(2m,\Fk)$ act on abstract regular polytopes of rank $2m+1$.
A symmetric group $S_n$ is known to have string C-group representations of highest rank $n-1$~\cite{CC} and an alternating group $A_n$ is known to have string C-group representations of highest rank $\lfloor\frac{n-1}{2}\rfloor$ when $n\geq 12$~\cite{an}.

The authors looked at the symmetric groups in~\cite{fl} and showed three important results. Firstly, when $n\geq 5$, the $(n-1)$-simplex is, up to isomorphism, the unique string C-group representation of rank $n-1$ for $S_n$. 
Secondly, they showed that when $n\geq 7$, there is also, up to isomorphism, a unique string C-group representation of rank $n-2$.
And finally, they showed that for every $n\geq 4$, and for every integer $r$ in the interval $[3,\ldots n-1]$, a symmetric group $S_n$ has at least one string C-group representation of rank $r$. Therefore, the symmetric groups have no gaps in their set of ranks. The first and second theorem have been extended in~\cite{extension} where the authors, together with Mark Mixer, classified string C-group representations of rank $n-3$ (for $n\geq 9$) and $n-4$ (for $n\geq 11$) for the symmetric group $S_n$.

Also with Mixer, the authors produced in~\cite{flm,flm2} string C-group representations of rank $\lfloor (n-1)/2 \rfloor$ for the alternating groups, with $n\geq 12$. 
In the process of obtaining these results, they computed all string C-group representations of $A_n$ with $n\leq 12$. They found that the set of ranks for the alternating groups of small degree were as given in Table~\ref{ranksAlt}.
\begin{table}
\begin{center}
\begin{tabular}{||c|c||}
\hline
Group&Set of ranks\\
\hline
$A_5$&\{3\}\\
$A_6$&$\emptyset$\\
$A_7$&$\emptyset$\\
$A_8$&$\emptyset$\\
$A_9$&\{3,4\}\\
$A_{10}$&\{3,4,5\}\\
$A_{11}$&\{3,6\}\\
$A_{12}$&\{3,4,5\}\\
\hline
\end{tabular}
\caption{Set of ranks for small alternating groups.}\label{ranksAlt}
\end{center}
\end{table}
The case $n=11$ turned out to be special in the sense that it was the only example encountered so far of a group whose set of ranks presented gaps. In this paper, we prove a similar result as the third theorem of~\cite{fl}. Our main result is stated as follows.

\begin{theorem}\label{mainT}
For $n\geq 12$ and  for every $3 \leq r \leq \lfloor (n-1)/2 \rfloor$, the group $A_n$ has at least one string C-group representation of rank $r$.
\end{theorem}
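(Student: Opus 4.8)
The plan is to realize every admissible rank by an explicit permutation representation. For a fixed $n\geq 12$ and a target rank $r$ with $3\leq r\leq\lfloor(n-1)/2\rfloor$, I would exhibit $r$ even permutations $\rho_0,\dots,\rho_{r-1}$ of $\{1,\dots,n\}$, conveniently encoded by a permutation representation graph, and check that $(A_n;\rho_0,\dots,\rho_{r-1})$ is a string C-group. Concretely this means verifying four things: that each $\rho_i$ is an involution; that $\rho_i\rho_j=\rho_j\rho_i$ whenever $|i-j|\geq 2$ (the string condition); that $\langle\rho_0,\dots,\rho_{r-1}\rangle=A_n$; and that the intersection property $\langle\rho_i:i\in I\rangle\cap\langle\rho_i:i\in J\rangle=\langle\rho_i:i\in I\cap J\rangle$ holds for all $I,J\subseteq\{0,\dots,r-1\}$. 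The shape of the generators makes the first two conditions immediate, so the substance lies in the last two.

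Rather than treat each pair $(n,r)$ in isolation, I would fix the rank $r$ and induct on the degree $n$. The decisive observation is that the extreme case $r=\lfloor(n-1)/2\rfloor$ is exactly the maximal-rank situation, which occurs for the two smallest admissible degrees: $\lfloor(n-1)/2\rfloor=r$ forces $n\in\{2r+1,2r+2\}$, and for these degrees rank $r$ is already provided by~\cite{flm,flm2}. Thus for each $r\geq 6$ the base of the induction (degrees $2r+1$ and $2r+2$, of opposite parity) is known, and for the remaining ranks $r\in\{3,4,5\}$ the base degree is $n=12$, for which the needed representations appear in Table~\ref{ranksAlt} and the computations of~\cite{flm,flm2}.

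For the inductive step I would pass from a rank-$r$ representation of $A_n$ to one of $A_{n+1}$ by adjoining a new point $n+1$ and \emph{rerouting} a single edge of the permutation representation graph incident to one of the two end-generators $\rho_0$ or $\rho_{r-1}$: replacing a transposition $(a,b)$ of that generator by $(a,n+1)$ keeps the number of transpositions, hence the parity, unchanged, so all generators remain even involutions and the string condition is undisturbed. It then has to be checked that the modified generators again generate the full alternating group $A_{n+1}$; here I would argue that the associated group is transitive, even primitive, and contains a permutation of small support, so that Jordan-type theorems (or the classification of primitive groups containing such elements) force it to be all of $A_{n+1}$.

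The main obstacle is the intersection property, and I would reduce it to a single equation at each stage by the standard inductive criterion for string C-groups~\cite{arp}: a string group generated by involutions is a string C-group as soon as its two facet subgroups $\langle\rho_0,\dots,\rho_{r-2}\rangle$ and $\langle\rho_1,\dots,\rho_{r-1}\rangle$ are string C-groups and satisfy $\langle\rho_0,\dots,\rho_{r-2}\rangle\cap\langle\rho_1,\dots,\rho_{r-1}\rangle=\langle\rho_1,\dots,\rho_{r-2}\rangle$. The construction must therefore be arranged so that these two facet subgroups are recognisable --- as (alternating or symmetric) groups acting on the sub-configurations cut out by deleting the first or last generator, or as lower-degree representations to which the induction hypothesis applies --- so that only this one intersection equation remains to be confirmed. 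Controlling this equation uniformly in $n$ and $r$, while simultaneously guaranteeing that the whole group stays equal to $A_n$, is where the delicate, case-dependent work will be concentrated.
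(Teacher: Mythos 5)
Your overall architecture --- explicit permutation representation graphs, rank fixed, induction on the degree starting from the maximal-rank examples of~\cite{flm,flm2}, with Proposition~\ref{arp} reducing the intersection property to one equation and a Jordan-type theorem (here Theorem~\ref{GJ}) forcing the group to be $A_n$ --- is exactly the skeleton of the paper's proof. The fatal problem is your inductive step. Rerouting a transposition $(a,b)$ of an end generator to $(a,n+1)$ conserves the total number of transpositions over all generators, i.e.\ the total number of edges of the graph. Transitivity of the group (necessary for it to be $A_{n+1}$) requires the graph to be connected, hence to have at least $n$ edges on $n+1$ vertices. Since your move never creates edges, starting from a base graph with $E_0$ edges you can never connect more than $E_0+1$ points; for fixed $r$ the theorem requires arbitrarily large $n$, so the induction collapses after finitely many steps no matter how cleverly the reroutings are chosen. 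The step also fails locally: if either $a$ or $b$ is incident to an edge of label $j$ with $|j|\geq 2$ relative to the rerouted label, the alternating square forced by the commuting property becomes an alternating path after rerouting, so $(\rho_0\rho_j)^2\neq 1$ and the sggi condition is destroyed; while if $a$ and $b$ carry only $0$- and $1$-edges (as on the alternating tails of the graphs in~\cite{flm,flm2}), the edge $\{a,b\}$ is a bridge, and rerouting it either disconnects the graph or merely renames a pendant vertex.

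This conservation issue is precisely the parity obstruction the actual proof has to work around: to absorb new points one must \emph{add} transpositions to generators, and to keep every generator even they must be added two at a time per generator. That is why the paper grows the degree by $2$ (for $n$ even, Theorem~\ref{evenr}, appending equal-length alternating $0$-$1$ tails to two components) or by $4$ (for $n$ odd, Theorems~\ref{1r} and~\ref{3r}, inserting alternating $0$-$1$ sequences of length $4k$), why the argument splits into congruence classes of $n$ modulo $4$, and why the ranks that the growth constructions cannot reach (ranks $4$ and $5$ for $n$ even, ranks $4$, $5$ and $6$ for $n\equiv 3 \bmod 4$) require the separate ad hoc families of Theorems~\ref{24}, \ref{25}, \ref{04}, \ref{05}, \ref{34}, \ref{35} and~\ref{36}. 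Your proposal has no mechanism playing this role, so the gap is not a matter of unfinished verification of the intersection property: the basic degree-raising move itself cannot work.
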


Mark Mixer talked about a similar result in 2015 at the AMS Fall Eastern Sectional Meeting in Rutgers (talk 1115-20-283) but never circulated nor published any proof of the result he then announced. We decided it was time to fill in this gap and we give in this paper a constructive proof by explicitly providing for each $n\geq 12$ and each rank $3\leq r \leq \lfloor (n-1)/2\rfloor$ a string C-group representation of rank $r$ for $A_n$.

This theorem shows indeed that the case $n=11$ is special among the alternating groups.
The main tool in the proof of our main theorem is to find good permutation representation graphs that turn out to be CPR graphs, for every rank $3\leq r\leq n$ once $n$ is fixed. We use a proof similar to that of the third theorem of~\cite{fl} to tackle most cases and are just left dealing with finding string C-group representations of rank four and five for $A_n$ when $n$ is even, and ranks 4, 5 and 6 when $n\equiv 3 \mod 4$.

The paper is organised as follows.
In Section~\ref{scg}, we recall the basic definitions about string C-groups.
In Section~\ref{sectioncpr}, we recall the definitions of permutation representation graphs and CPR-graphs and give some results that will be useful in proving Theorem~\ref{mainT}.
Finally, in Section~\ref{proofMainT}, we prove Theorem~\ref{mainT}.

\section{String C-groups}\label{scg}

An abstract polytope is a combinatorial object which generalizes a classical convex polytope in Euclidean space.
When the automorphism group of an abstract polytope acts regularly on its set of flags, the polytope is called \emph{regular}, and in that case, its automorphism group can be presented as what is known as a string C-group.  Additionally, each abstract regular polytope can be constructed from a string C-group, and thus abstract regular polytopes and string C-groups are basically the same objects.  For more details on the subject see \cite[Section 2E]{arp}. 


A \emph{Coxeter group} is a group with generators $\rho_0,\ldots,\rho_{r-1}$ and presentation
$$\langle \rho_i \ | \ (\rho_i\rho_j)^{m_{i,j}}=\varepsilon\mbox{ for all }i,\,j\in\{0,\ldots,r-1\} \rangle$$
where each $m_{i,j}$ is a positive integer or infinity, $m_{i,i}=1$, and $m_{i,j}=m_{j,i}>1$ for $i\neq j$.  It follows from the definition, that a Coxeter group satisfies the next condition called the \emph{intersection property}.
\[\forall J, K \subseteq \{0,\ldots,r-1\}, \langle \rho_j \mid j \in J\rangle \cap \langle \rho_k \mid k \in K\rangle = \langle \rho_j \mid j \in J\cap K\rangle\]
A Coxeter group $G$ can be represented by a \emph{Coxeter diagram} $\mathcal{D}$. This Coxeter diagram $\mathcal{D}$ is a labelled graph which represents the set of relations of $G$. More precisely, the vertices of the graph correspond to the generators $\rho_i$ of $G$, and for each $i$ and $j$, an edge with label $m_{i,j}$ joins the $i$th and the $j$th vertices; conventionally, edges of label 2 are omitted. By a \emph{string (Coxeter) diagram} we mean a Coxeter diagram with each connected component linear. A Coxeter group with a string diagram is called a \emph{string Coxeter group}.

More generally, we define a \emph{string group generated by involutions}, or \emph{sggi} for short, as a pair $(G,S)$ where $G$ is a group, $S:= \{\rho_0,\ldots,\rho_{r-1}\}$ is a finite set of involutions of $G$ that generate $G$ and that satisfy the following property, called the \emph{commuting property}.
\[\forall i,j\in\{0,\ldots, r-1\}, \;|i-j|>1\Rightarrow (\rho_i\rho_j)^2=1\]
Finally, a {\em string C-group representation} of a group $G$ is a pair $(G,S)$ that is a sggi and that satisfies the intersection property. In this case the underlying ``Coxeter" diagram for $(G,S)$ is a string diagram.
The \emph{(Schl\"afli) type} of $(G,S)$ is $\{p_1,\,\ldots,\,p_{r-1}\}$ where $p_i$ is the order of $\rho_{i-1}\rho_i,\,i\in\{1,\ldots,r-1\}$,
and the \emph{rank} of a string C-group representation (or of a sggi) $(G,S)$ is the size of $S$.
When the context is clear, we sometimes omit to precise the set of generators $S$ and we talk about a string C-group $G$ instead of a string C-group representation $(G,S)$.

The {\em set of ranks} of a group $G$ is the largest set of integers $I$ such that for each $r\in I$, there exists at least one string C-group representation of rank $r$ for $G$.

Let $(G,S)$ be a sggi with $S:=\{\rho_0,\,\ldots,\,\rho_{r-1}\}$. We denote by $G_I$ with $I\subseteq \{0,\ldots,\,r-1\}$ the subgroup of $G$ generated by the involutions with indices that are in $I$; each $G_I$ is itself a string C-group.  Also, for $i,j \in  \{0,\ldots,\,r-1\}$, we denote $G_i=\langle \rho_j\,|\, j\neq i\rangle$ and $G_{i,j}:=(G_i)_j$.
The following two results show that when $G_0$ and $G_{r-1}$ are string C-group representations, the intersection property for $(G,S)$ is verified by checking only one condition.

\begin{proposition}\cite[Proposition 2E16]{arp}\label{arp}
Let $(G,S)$ be a sggi with $S:=\{\rho_0,\ldots,\rho_{r-1}\}$.
Suppose that its subgroups, $G_0$ and $G_{r-1}$, are string $C$-group. If $G_0\cap G_{r-1} \cong G_{0,r-1}$, then $(G,S)$ is a string $C$-group.
\end{proposition}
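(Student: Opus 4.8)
The plan is to establish the full intersection property for $(G,S)$, that is, $G_J\cap G_K=G_{J\cap K}$ for all $J,K\subseteq\{0,\ldots,r-1\}$, working directly from the three hypotheses at hand: the intersection property inside $G_0$, the intersection property inside $G_{r-1}$, and the single identity $G_0\cap G_{r-1}=G_{0,r-1}$. (Since $G_{0,r-1}\leq G_0\cap G_{r-1}$ always holds and the groups involved are finite, the isomorphism in the statement is equivalent to this equality.) The inclusion $G_{J\cap K}\leq G_J\cap G_K$ is automatic, so the whole content is the reverse inclusion $G_J\cap G_K\leq G_{J\cap K}$, which I would prove by a case analysis on how the two ends $0$ and $r-1$ of the string are distributed among $J$ and $K$.

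First I would dispose of the two easy regimes. If $0\notin J\cup K$, then both $G_J$ and $G_K$ are contained in $G_0=\langle\rho_1,\ldots,\rho_{r-1}\rangle$, and the desired equality is exactly an instance of the intersection property assumed for $G_0$; symmetrically, if $r-1\notin J\cup K$ we work inside $G_{r-1}$. Hence I may assume that both $0$ and $r-1$ occur in $J\cup K$.

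The heart of the argument is the representative case $0\in J\setminus K$ and $r-1\in K\setminus J$. Here $r-1\notin J$ gives $G_J\leq G_{r-1}$ and $0\notin K$ gives $G_K\leq G_0$, so by the key hypothesis $G_J\cap G_K\leq G_{r-1}\cap G_0=G_{0,r-1}=G_{\{1,\ldots,r-2\}}$. Consequently $G_J\cap G_K\leq G_J\cap G_{\{1,\ldots,r-2\}}$, and since $J$ and $\{1,\ldots,r-2\}$ are both index sets of $G_{r-1}$, the intersection property in $G_{r-1}$ collapses this to $G_{J\setminus\{0\}}$. A final intersection with $G_K$, now taking place entirely inside $G_0$ (both $J\setminus\{0\}$ and $K$ avoid the index $0$), uses the intersection property in $G_0$ to give $G_{(J\setminus\{0\})\cap K}=G_{J\cap K}$, as required. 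The recurring mechanism is thus: use $G_0\cap G_{r-1}=G_{0,r-1}$ to push any common element into the middle subgroup $G_{\{1,\ldots,r-2\}}$, then let the assumed intersection properties of the two ends finish the job.

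The hard part will be the bookkeeping for the remaining configurations — when one of the end indices $0$ or $r-1$ lies in \emph{both} $J$ and $K$, or when both ends lie in a single one of them — since there the two factors no longer sit cleanly one in $G_0$ and one in $G_{r-1}$. I expect to handle these by the same collapse-to-the-middle idea applied iteratively (or by an induction on the rank, in which the inductive hypothesis for the rank $r-1$ subgroups $G_0$ and $G_{r-1}$ is precisely what the hypotheses supply), peeling off the end generators one at a time and at each stage replacing the intersection by one that takes place strictly inside $G_0$ or $G_{r-1}$. The main point to verify carefully is that this reduction always terminates in the easy regimes above and never leaves a genuinely new configuration unaddressed.
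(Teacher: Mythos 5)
The paper itself offers no proof of this proposition: it is quoted verbatim from McMullen--Schulte \cite[Proposition 2E16]{arp}, where it is deduced from the segment criterion (Proposition 2E13 of that book), whose proof is where the real work lies. Measured against that, your attempt is incomplete in exactly the place that matters. The parts you do carry out are correct: the two easy regimes ($0\notin J\cup K$ or $r-1\notin J\cup K$), and the case $0\in J\setminus K$, $r-1\in K\setminus J$, where the chain $G_J\cap G_K\leq G_0\cap G_{r-1}=G_{0,r-1}$, followed by the intersection property inside $G_{r-1}$ and then inside $G_0$, is exactly the routine half of the standard argument. But the configurations you defer to ``bookkeeping'' --- those in which $J$ (or $K$) is a proper subset containing \emph{both} $0$ and $r-1$ --- are the entire content of the proposition, and neither of your proposed mechanisms closes them. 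The collapse-to-the-middle step cannot even start there, because such a $G_J$ is contained in neither $G_0$ nor $G_{r-1}$; and the rank-induction remark is off target, since the hypotheses of the proposition do not reproduce themselves for smaller parabolics in the form an induction would need, and no induction on rank ever converts an index set containing both ends into one that does not.

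To see that this is a genuine mathematical gap and not bookkeeping, consider the simplest deferred instance: $J=\{0,r-1\}$, $K=\{1,\ldots,r-2\}$, where one must show $\langle\rho_0,\rho_{r-1}\rangle\cap G_{0,r-1}=1$. None of your three hypotheses applies directly to $\langle\rho_0,\rho_{r-1}\rangle$. The resolution requires an essential use of the commuting (string) property, which your outline never invokes: since $\rho_0$ and $\rho_{r-1}$ commute, $\langle\rho_0,\rho_{r-1}\rangle=\{1,\rho_0,\rho_{r-1},\rho_0\rho_{r-1}\}$, and then, e.g., $\rho_0\rho_{r-1}\in G_{0,r-1}$ would force $\rho_0\in G_0\cap G_{r-1}=G_{0,r-1}$, contradicting $\langle\rho_0\rangle\cap G_{0,r-1}=1$ inside the C-group $G_{r-1}$. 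In general, when $J\ni 0,r-1$ is proper one must pick a gap $m\notin J$, decompose $G_J=G_{J\cap[0,m-1]}\cdot G_{J\cap[m+1,r-1]}$ as a product of \emph{commuting} factors lying in $G_{r-1}$ and $G_0$ respectively, and then compare factorizations of a common element, pushing the discrepancy into $G_{0,r-1}$ before the two end intersection properties can be applied; this (or the equivalent reduction via \cite[Proposition 2E13]{arp}) is the missing core of the proof. As it stands, your argument proves only the cases that were never in doubt.
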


We point out that the inclusion $G_0\cap G_{r-1} \geq G_{0,r-1}$ is trivial, and thus we only need to check that $G_0\cap G_{r-1} \leq G_{0,r-1}$. The following proposition makes it even simpler to check if a pair $(G,S)$ is a string C-group representation when $G_{0,r-1}$ is a maximal subgroup of either $G_0$ or $G_{r-1}$ (or both).

\begin{proposition}
\label{max}\cite[Lemma 2.2]{flm}
Let $\Gamma = \langle \rho_0,\ldots,\rho_{r-1} \rangle$ be a sggi and suppose that its subgroups, $\Gamma_0$ and $\Gamma_{r-1}$, are string $C$-groups. If $\rho_{r-1} \not \in \Gamma_{r-1}$ and $\Gamma_{0,r-1}$ is maximal in $\Gamma_0$, then $\Gamma$ is a string C-group.
\end{proposition}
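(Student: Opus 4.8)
The plan is to reduce everything to Proposition~\ref{arp} and then exploit the maximality hypothesis to pin down the intersection $\Gamma_0 \cap \Gamma_{r-1}$. Since $\Gamma_0$ and $\Gamma_{r-1}$ are assumed to be string C-groups, Proposition~\ref{arp} tells us it suffices to verify that $\Gamma_0 \cap \Gamma_{r-1} \cong \Gamma_{0,r-1}$. As noted immediately after that proposition, the inclusion $\Gamma_0 \cap \Gamma_{r-1} \geq \Gamma_{0,r-1}$ is automatic, so the only thing left to establish is the reverse containment $\Gamma_0 \cap \Gamma_{r-1} \leq \Gamma_{0,r-1}$.

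To that end I would record the chain of subgroups $\Gamma_{0,r-1} \leq \Gamma_0 \cap \Gamma_{r-1} \leq \Gamma_0$, where the first inclusion is the trivial one just mentioned and the second is obvious. Because $\Gamma_{0,r-1}$ is maximal in $\Gamma_0$ by hypothesis, the intermediate subgroup $\Gamma_0 \cap \Gamma_{r-1}$ must coincide with one of the two endpoints of this chain: either $\Gamma_0 \cap \Gamma_{r-1} = \Gamma_{0,r-1}$, which is exactly the desired conclusion, or $\Gamma_0 \cap \Gamma_{r-1} = \Gamma_0$.

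The heart of the argument, and the step I regard as the main (if modest) obstacle, is ruling out this second alternative, and this is precisely where the hypothesis $\rho_{r-1} \notin \Gamma_{r-1}$ is used. If $\Gamma_0 \cap \Gamma_{r-1} = \Gamma_0$, then $\Gamma_0 \leq \Gamma_{r-1}$; since $\Gamma_0 = \langle \rho_1, \ldots, \rho_{r-1}\rangle$ contains $\rho_{r-1}$, this would force $\rho_{r-1} \in \Gamma_{r-1}$, contradicting the hypothesis. Hence $\Gamma_0 \cap \Gamma_{r-1} = \Gamma_{0,r-1}$, and Proposition~\ref{arp} then yields that $\Gamma$ is a string C-group. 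Beyond invoking Proposition~\ref{arp}, the only care required is in the index bookkeeping, namely keeping track that $\Gamma_{0,r-1} = (\Gamma_0)_{r-1} = \langle \rho_1, \ldots, \rho_{r-2}\rangle$ sits maximally below $\Gamma_0 = \langle \rho_1, \ldots, \rho_{r-1}\rangle$, while $\rho_{r-1}$ lies in $\Gamma_0$ but, by assumption, not in $\Gamma_{r-1}$.
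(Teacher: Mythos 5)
Your proof is correct and is essentially the standard argument (the one given for Lemma 2.2 in the cited reference \cite{flm}, which this paper quotes without reproving): reduce to Proposition~\ref{arp}, squeeze $\Gamma_0 \cap \Gamma_{r-1}$ between $\Gamma_{0,r-1}$ and $\Gamma_0$, and use $\rho_{r-1} \in \Gamma_0$ together with $\rho_{r-1} \notin \Gamma_{r-1}$ to exclude the case $\Gamma_0 \cap \Gamma_{r-1} = \Gamma_0$ forced by maximality. No gaps; the index bookkeeping is exactly right.
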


\section{Permutation representation graphs and CPR graphs}\label{sectioncpr}

Let $G$ be a group of permutations acting on a set $\{1,\,\ldots,\,n\}$.
Let $S:=\{\rho_0,\ldots,\rho_{r-1}\}$ be a set of $r$ involutions of $G$ that generate $G$.
We define $\mathcal{G}$ as the $r$-edge-labeled multigraph  with $n$ vertices and with an $i$-edge $\{a,\,b\}$ whenever $a\rho_i=b$ with $a\neq b$. 

The pair $(G,S)$ is a sggi if and only if $\mathcal{G}$ satisfies the following properties:
\begin{enumerate}
\item The graph induced by edges of label $i$ is a matching;
\item Each connected component of the graph induced by edges of labels $i$ and $j$, for $|i-j| \geq 2$, is a single vertex, a single edge, a double edge, or a square with alternating labels.
\end{enumerate}

When $(G,S)$ is a string C-group, the multigraph $\mathcal{G}$ is called a \emph{CPR graph}, as defined in \cite{pcpr}.  In rank $3$, there are a couple of known results to determine if a $3$-edge-labeled multigraph is a CPR graph.  For higher ranks, no arguments were accomplished.

One simple example of a CPR graph is the one corresponding to the $(n-1)$-simplex as follows:
$$ \xymatrix@-1pc{*+[o][F]{}  \ar@{-}[rr]^0 && *+[o][F]{}  \ar@{-}[rr]^1 && *+[o][F]{}  \ar@{-}[rr]^2 && *+[o][F]{} \ar@{-}[rr]^3 && *+[o][F]{} \ar@{.}[rr] && *+[o][F]{}  \ar@{-}[rr]^{n-2} && *+[o][F]{} \ar@{-}[rr]^{n-1} &&*+[o][F]{} }$$

In~\cite{fl}, for each rank $3\leq r\leq n-2$, a regular $r$-polytope with automorphism group $S_n$ was found. In~\cite{flm}, the authors constructed a regular $r$-polytope, for each rank $r\geq 4$, with automorphism group $A_n$ for some $n$.  This is summarized in the following two theorems, and the associated CPR graphs are given in Table~\ref{TT}.
\begin{theorem}\label{Sym}\cite[Theorem 3]{fl}
For $n\geq 5$ and $3\leq r\leq n-2$, there is an abstract regular $r$-polytope of type $\{n-r+2,\,6,\,3^{r-3}\}$ with automorphism group isomorphic to $S_n$.
\end{theorem}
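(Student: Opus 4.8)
The plan is to give a constructive proof: for each admissible pair $(n,r)$ I would write down an explicit permutation representation graph $\mathcal{G}$ on the point set $\{1,\dots,n\}$, exhibit it as the family recorded in Table~\ref{TT}, and then verify that it is a CPR graph for $S_n$ of the stated type. The shape of $\mathcal{G}$ I would use is a controlled deformation of the $(n-1)$-simplex path from the example above: the generators $\rho_2,\dots,\rho_{r-1}$ form a linear tail of $r-2$ overlapping transpositions occupying the points $\{n-r+2,\dots,n\}$, exactly as in the simplex, so that $(\rho_{i-1}\rho_i)^3=\varepsilon$ for every $i\ge 3$ and the last $r-3$ marks of the Schl\"afli symbol are $3$. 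The remaining generators $\rho_0,\rho_1$, together with their interface with $\rho_2$ at the junction point $n-r+2$, are placed on the points $\{1,\dots,n-r+2\}$ so that the $\{0,1\}$-subgraph realises $\rho_0\rho_1$ as a single $(n-r+2)$-cycle and the $\{1,2\}$-subgraph forces $\rho_1\rho_2$ to have order $6$; apart from the tail, this head region is where $\mathcal{G}$ differs from the simplex.

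Once the graph is fixed, four checks remain, three of which are routine. First, that $(S_n,S)$ is a sggi amounts to verifying the two local conditions on $\mathcal{G}$: each label induces a matching, and for $|i-j|\ge 2$ every $\{i,j\}$-component is a vertex, an edge, a double edge or an alternating square---by construction the only non-trivial such pairs are $\{0,2\}$ and, if $r\ge 5$, $\{1,3\}$, whose components I would read off directly. Second, $\langle S\rangle=S_n$ follows because $\mathcal{G}$ is connected, hence the group is transitive; it is primitive (a non-trivial block system is incompatible with the $(n-r+2)$-cycle meeting the transposition tail), and a primitive subgroup of $S_n$ containing the transposition $\rho_{r-1}=(n-1,n)$ is all of $S_n$ by Jordan's theorem. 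Third, the Schl\"afli type $\{n-r+2,6,3^{r-3}\}$ is obtained by inspecting the consecutive two-label subgraphs as above.

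The substantive step is the intersection property, which I would establish by induction on $r$ (equivalently, along the diagonal where $n-r$ is constant) using Proposition~\ref{max}. Removing the last generator gives $\Gamma_{r-1}=\langle\rho_0,\dots,\rho_{r-2}\rangle$, which by self-similarity of the construction is precisely the member of the same family for $(n-1,r-1)$, hence a string C-group by the inductive hypothesis; the non-degeneracy condition $\rho_{r-1}\notin\Gamma_{r-1}$ is immediate since $\rho_{r-1}$ is the only generator moving the point $n$. It then remains to identify $\Gamma_0=\langle\rho_1,\dots,\rho_{r-1}\rangle$, to check it is a string C-group, and to prove that $\Gamma_{0,r-1}=\langle\rho_1,\dots,\rho_{r-2}\rangle$ is maximal in it. Here $\Gamma_0$ loses the long $\{0,1\}$-cycle and collapses to a smaller direct product of symmetric groups, for which the required maximality reduces to a classical fact about maximal subgroups of symmetric groups (an intransitive subgroup $S_a\times S_b$, or a point stabiliser $S_{m-1}<S_m$). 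I expect this maximality analysis for $\Gamma_0$, together with the base case $r=3$ of type $\{n-1,6\}$---where the inductive machinery is unavailable and one must instead invoke the special rank-$3$ criteria for recognising CPR graphs---to be the main obstacle; everything else is a bounded, local verification on $\mathcal{G}$.
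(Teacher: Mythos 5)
Your proposal is correct in substance but takes a genuinely different route from the paper. The paper imports Theorem~\ref{Sym} from \cite{fl}, and the proof it describes (in Section~\ref{sectioncpr}, illustrated in Table~\ref{sym7}) is an iterated Petrie-like mixing operation based on the construction of Hartley and Leemans \cite{HL}: starting from the $(n-1)$-simplex representation of $S_n$, one repeatedly replaces $\rho_{d-2}$ by $\rho_{d-2}\rho_d$ and drops $\rho_d$, so $n$ stays fixed while the rank descends from $n-1$ to $3$, the preservation of the string C-property at each step being the theorem proved in \cite{fl}. You instead fix the target CPR graph of Table~\ref{TT} and induct along the diagonal $n-r$ constant via Proposition~\ref{max}, with $\Gamma_{r-1}$ identified as the $(n-1,r-1)$-member of the same family; this is exactly the style the present paper uses for its own new results (Propositions~\ref{Sym2(1)}, \ref{Sym2}, \ref{Sym3} and Theorem~\ref{1r}), so it is a natural alternative: the paper's route produces all ranks for a fixed $n$ in one sweep and outsources the intersection-property work to the mixing theorem, while yours is self-contained modulo Proposition~\ref{max} but must analyse the sections explicitly. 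Two of those analyses need more care than you give them. First, $\Gamma_0$ is not a plain direct product of symmetric groups generated factor-by-factor: it is the simplex group $S_r$ with $\rho_1$ decorated by the product $\tau$ of the isolated $1$-edges, and since $\tau=(\rho_1\rho_2)^3\in\Gamma_0$, Proposition~\ref{sesqui}(2) does not apply; the intersection property of this decorated simplex must be proved directly (a short argument using that the only homomorphisms from a symmetric group to $C_2$ are the trivial one and the sign, or a citation such as \cite[Table 2]{flm2}), after which $\Gamma_0\cong S_r\times 2$, $\Gamma_{0,r-1}\cong S_{r-1}\times 2$, and the maximality you need does reduce to $S_{r-1}<S_r$. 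Second, your block-system sketch of primitivity is the shakiest step; it is cleaner to note that the stabilizer of the last point contains $\Gamma_{r-1}\cong S_{n-1}$ acting fully on the other $n-1$ points by induction, so $\Gamma$ is $2$-transitive, hence primitive, and Jordan's theorem applied with the transposition $\rho_{r-1}$ gives $\Gamma\cong S_n$.
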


\begin{theorem}\cite[Theorem 1.1]{flm}
\label{maintheorem}
For each rank $k \geq 3$, there is a regular $k$-polytope with automorphism group isomorphic to an alternating group $A_n$ for some $n$.  In particular, for each even rank $r \geq 4$, there is a regular polytope of type $\{10,3^{r-2}\}$ with automorphism group isomorphic to $A_{2r+1}$, and for each odd rank $q \geq 5$, there is a regular polytope of type $\{10,3^{q-4},6,4\}$ with automorphism group isomorphic to $A_{2q+3}$.
\end{theorem}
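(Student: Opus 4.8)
The plan is to prove both families by writing down, for each target rank, an explicit permutation representation graph on $n$ points---$n=2r+1$ in the even-rank case and $n=2q+3$ in the odd-rank case---and then verifying in turn that this graph is a CPR graph whose group is the full alternating group of the prescribed Schl\"afli type. First I would specify the generators $\rho_0,\dots,\rho_{r-1}$ as explicit products of transpositions. Since we want $G\leq A_n$, every $\rho_i$ must be an even permutation, so each is a matching with an even number of edges. The backbone of the graph is a near-linear chain arranged so that every consecutive pair $\{\rho_{i-1},\rho_i\}$ with $i\geq 2$ yields an order-$3$ element (alternating squares, or short alternating paths closing up to $3$-cycles), while the $\{0,1\}$-labelled subgraph carries a single alternating component on the right number of vertices that forces $\rho_0\rho_1$ to have order $10$. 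In the odd case the far end of the chain is decorated so as to produce the two trailing entries $6$ and $4$ of the type $\{10,3^{q-4},6,4\}$.

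With the graph in hand, the sggi conditions are a purely local, finite check: each label induces a matching by construction, and for $|i-j|\geq 2$ one confirms directly that the $\{i,j\}$-subgraph has only the permitted components (isolated vertices, single edges, double edges, alternating squares). Reading off the orders of the $\rho_{i-1}\rho_i$ from the consecutive two-label subgraphs then gives the Schl\"afli type. To identify the group itself I would show $G\leq A_n$ by parity, prove that $G$ is transitive and in fact primitive on the $n$ points, and exhibit a short-support even permutation (for instance a $3$-cycle obtained as a commutator or power of two generators); primitivity together with such an element forces $G=A_n$ by a standard Jordan-type argument. The small base ranks, including rank $3$, are handled separately by direct or computational verification.

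The real work, and the expected main obstacle, is the intersection property, which I would establish using Proposition~\ref{max}. Consider first the even-rank family of type $\{10,3^{r-2}\}$. Deleting the first generator leaves $\Gamma_0=\langle\rho_1,\dots,\rho_{r-1}\rangle$ of pure simplex type $\{3^{r-2}\}$, which forces $\Gamma_0\cong S_r$ and hence is a string C-group; deleting the last generator leaves $\Gamma_{r-1}$ of type $\{10,3^{r-3}\}$, which I would show to be a string C-group by a parallel induction on a slightly enlarged family of permutation graphs. The crucial point is that $\Gamma_{0,r-1}=\langle\rho_1,\dots,\rho_{r-2}\rangle\cong S_{r-1}$ sits inside $\Gamma_0\cong S_r$ as a point stabiliser, hence is maximal; combined with $\rho_{r-1}\notin\Gamma_{r-1}$, which is clear from supports and parity, Proposition~\ref{max} applies and $(G,S)$ is a string C-group.

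For the odd-rank family the two trailing entries $6$ and $4$ destroy the pure-simplex shape of the parabolics, so both the identification of the relevant maximal parabolic and the verification that $\Gamma_{0,r-1}$ is maximal inside it become genuinely delicate and must be carried out case by case according to the congruence class of $n$ modulo $4$. Establishing these maximalities requires explicit knowledge of the subgroup structure near the top of the relevant symmetric or alternating groups, and this---rather than any single clever device---is where the bulk of the argument resides. Once maximality is secured in every case, Proposition~\ref{max} (or, where the quotient is not maximal, the weaker Proposition~\ref{arp}) certifies the intersection property, the permutation representation graph is confirmed to be a CPR graph, and the construction yields the asserted regular $r$-polytope with automorphism group $A_n$.
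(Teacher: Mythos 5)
This statement is not proved in the present paper at all: it is quoted from \cite[Theorem 1.1]{flm}, and the paper merely records the corresponding CPR graphs in Table~\ref{TT}. Your plan --- explicit permutation representation graphs on $2r+1$ (resp.\ $2q+3$) points, identification of the group as $A_n$ via parity, primitivity and a Jordan-type theorem (Theorem~\ref{GJ}), and the intersection property established on the maximal parabolics by induction, sesqui-extensions (Proposition~\ref{sesqui}) and Proposition~\ref{max} --- is essentially the same approach as the cited source's proof, so it matches the intended argument in all its main steps.
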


\begin{table}
\begin{small}
\begin{center}
\begin{tabular}{||c|c|c||}
\hline
Group&Schl\"afli Type&CPR Graph\\
\hline
\begin{tabular}{c}
\\[-10pt]
$S_{n}$\\[5pt] $^{(3\leq r\leq n-2)}$
\end{tabular} &$\{n-r+2,\,6,\,3^{r-3}\}$&$ \xymatrix@-1.7pc{*+[o][F]{}  \ar@{-}[rr]^0 && *+[o][F]{}  \ar@{-}[rr]^1 && *+[o][F]{}  \ar@{.}[rr] && *+[o][F]{}  \ar@{-}[rr]^0 && *+[o][F]{} \ar@{-}[rr]^{1} && *+[o][F]{}  \ar@{-}[rr]^{2} && *+[o][F]{} \ar@{-}[rr]^3 && *+[o][F]{} \ar@{.}[rr] && *+[o][F]{}  \ar@{-}[rr]^{r-2} && *+[o][F]{} \ar@{-}[rr]^{r-1} &&*+[o][F]{} }$\\
\hline
\begin{tabular}{c}
\\[-10pt]
$A_{2r+1}$\\[5pt] $^{(r\textrm{ even and }\geq 4)}$
\end{tabular} & $\{10,3^{r-2}\}$
&
$\xymatrix@-1.7pc{&& *+[o][F]{}  \ar@{-}[rr]^1 && *+[o][F]{}  \ar@{-}[rr]^2 && *+[o][F]{}  \ar@{-}[rr]^3 && *+[o][F]{}   \ar@{.}[rr] && *+[o][F]{}  \ar@{-}[rr]^{r-2} && *+[o][F]{}  \ar@{-}[rr]^{r-1} && *+[o][F]{} \\
&& && && && && && && \\
 *+[o][F]{}  \ar@{-}[rr]_0 && *+[o][F]{}  \ar@{-}[rr]_1 && *+[o][F]{}   \ar@{-}[uu]^0   \ar@{-}[rr]_2 && *+[o][F]{}  \ar@{-}[uu]^0   \ar@{-}[rr]_3 && *+[o][F]{}  \ar@{-}[uu]^0     \ar@{.}[rr] && *+[o][F]{}  \ar@{-}[uu]_0   \ar@{-}[rr]_{r-2} && *+[o][F]{}  \ar@{-}[uu]_0   \ar@{-}[rr]_{r-1} &&  *+[o][F]{} \ar@{-}[uu]_0     }$\\
\hline
\begin{tabular}{c}
\\[-10pt]
$A_{2r+3}$\\[5pt] $^{(r \textrm{ odd and } \geq 5)}$
\end{tabular} & $\{10,3^{r-4},6,4\}$
&
$ \xymatrix@-1.7pc{&& *+[o][F]{}  \ar@{-}[rr]^1 && *+[o][F]{}  \ar@{-}[rr]^2 && *+[o][F]{}  \ar@{-}[rr]^3 && *+[o][F]{}   \ar@{.}[rr] && *+[o][F]{}  \ar@{-}[rr]^{r-2} && *+[o][F]{}  \ar@{-}[rr]^{r-1} && *+[o][F]{}   \ar@{-}[rr]^{r-2} && *+[o][F]{}\\
&& && && && && && && &&\\
 *+[o][F]{}  \ar@{-}[rr]_0 && *+[o][F]{}  \ar@{-}[rr]_1 && *+[o][F]{}   \ar@{-}[uu]^0   \ar@{-}[rr]_2 && *+[o][F]{}  \ar@{-}[uu]^0   \ar@{-}[rr]_3 && *+[o][F]{}  \ar@{-}[uu]^0     \ar@{.}[rr] && *+[o][F]{}  \ar@{-}[uu]_0   \ar@{-}[rr]_{r-2} && *+[o][F]{}  \ar@{-}[uu]_0   \ar@{-}[rr]_{r-1} &&  *+[o][F]{} \ar@{-}[uu]_0  \ar@{-}[rr]_{r-2} && *+[o][F]{}   \ar@{-}[uu]_0  }$\\
 \hline
\end{tabular}
\caption{String C-groups for $S_n$ and $A_n$}\label{TT}
\end{center}
\end{small}
\end{table}

Permutation representation graphs are a very useful tool for the construction of string groups generated by involutions.  We will use them in the proof of our main theorem.

The term sesqui-extension was first introduced in \cite{flm}. Let us recall its meaning.
Let $\Phi = \langle \alpha_0,\ldots,\alpha_{d-1} \rangle$ be a sggi, and let $\tau$ be an involution in a supergroup of $\Phi$ such that $\tau \not \in \Phi$ and $\tau$ commutes with all of $\Phi $.  For fixed $k$, we define the group $\Phi^*= \langle \alpha_i \tau^{\eta_i}\,|\, i\in \{0,\,\ldots,\,d-1\} \rangle$ where $\eta_i = 1$ if $i=k$ and 0 otherwise, the {\it sesqui-extension} of $\Phi $ with respect to $\alpha_k$ and $\tau$.

\begin{proposition} \label{sesqui}\cite[Proposition 5.4]{flm2}
If $\Phi =\left<\alpha_i\,|\, i=0,\,\ldots,\,d-1\right>$ and $\Phi^*=\langle \alpha_i \tau^{\eta_i}\,|\, i\in \{0,\,\ldots,\,d-1\} \rangle$ is a sesqui-extension of $\Phi$ with respect to $\alpha_k$, then:
\begin{enumerate}
\item $ \Phi^* \cong \Phi$ or $\Phi^* \cong \Phi\times \langle \tau\rangle\cong\Phi\times 2$.
\item whenever $\tau \notin \Phi^*$, $\Phi$ is a string C-group if and only if $\Phi^*$ is a string C-group.
\end{enumerate}
\end{proposition}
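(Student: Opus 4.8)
The plan is to realise everything inside the internal direct product $\Phi \times \langle \tau \rangle$, which is legitimate since $\tau$ is an involution that commutes with all of $\Phi$ and lies outside $\Phi$. Let $\pi \colon \Phi \times \langle \tau \rangle \to \Phi$ be the projection onto the first factor and restrict it to $\Phi^*$. On the generators $\pi$ acts by $\alpha_i \mapsto \alpha_i$ for $i \neq k$ and $\alpha_k\tau \mapsto \alpha_k$, so $\pi(\Phi^*) = \langle \alpha_0,\ldots,\alpha_{d-1}\rangle = \Phi$ and $\pi|_{\Phi^*}$ is surjective. Its kernel is $\Phi^* \cap \langle \tau \rangle$, which, since $\langle \tau \rangle$ has order $2$, is either trivial or all of $\langle \tau \rangle$.

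For part (1) I would split on whether $\tau \in \Phi^*$. If $\tau \notin \Phi^*$ the kernel above is trivial, so $\pi|_{\Phi^*}$ is an isomorphism and $\Phi^* \cong \Phi$. If instead $\tau \in \Phi^*$, then $\alpha_k = (\alpha_k\tau)\tau \in \Phi^*$, whence every $\alpha_i$ lies in $\Phi^*$ and $\Phi \leq \Phi^*$; together with $\tau \in \Phi^*$ this gives $\Phi^* = \Phi \times \langle \tau \rangle \cong \Phi \times 2$. These two cases exhaust the possibilities.

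For part (2), assume $\tau \notin \Phi^*$, so that $\varphi := \pi|_{\Phi^*}\colon \Phi^* \to \Phi$ is an isomorphism. First I would check that $\Phi^*$ is an sggi in its own right: each generator is an involution --- for $\alpha_k\tau$ this uses $(\alpha_k\tau)^2 = \alpha_k^2\tau^2 = 1$ together with $\alpha_k\tau \neq 1$ (otherwise $\tau = \alpha_k \in \Phi$) --- and the commuting property is inherited because $\tau$ is central, so for $|i-j|>1$ one has $(\alpha_i\tau^{\eta_i}\alpha_j\tau^{\eta_j})^2 = (\alpha_i\alpha_j)^2 = 1$. The decisive observation is that $\varphi$ is \emph{label-preserving}: it sends the $i$-th generator of $\Phi^*$ to the $i$-th generator $\alpha_i$ of $\Phi$, and hence carries each subgroup $\Phi^*_I = \langle \alpha_i\tau^{\eta_i} \mid i \in I \rangle$ isomorphically onto $\Phi_I = \langle \alpha_i \mid i \in I \rangle$. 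Since the intersection property is a statement about these labelled subgroups only, it holds in $\Phi^*$ if and only if it holds in $\Phi$; as both are sggis, $\Phi^*$ is a string C-group exactly when $\Phi$ is.

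All of these steps are routine, and the only point demanding care is the label-preservation of $\varphi$: it is this, rather than a bare abstract isomorphism, that lets the intersection property transfer faithfully between $\Phi$ and $\Phi^*$. I anticipate no real obstacle beyond tracking the central role of $\tau$ throughout.
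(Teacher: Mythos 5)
Your proof is correct, and it is essentially the argument behind the cited result: the paper itself gives no proof of Proposition~\ref{sesqui}, quoting it directly from \cite[Proposition 5.4]{flm2}, where the same device is used --- restrict the projection $\Phi\times\langle\tau\rangle\to\Phi$ to $\Phi^*$, split on whether $\tau\in\Phi^*$, and exploit that the resulting isomorphism carries distinguished generators to distinguished generators. Your closing emphasis that it is the \emph{label-preserving} nature of this isomorphism (not a bare abstract isomorphism) which transfers the intersection property between $\Phi$ and $\Phi^*$ is precisely the point that makes the statement usable in the rest of the paper.
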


Sesqui-extensions will be used later to check the intersection condition on the permutation representations of the groups of our main theorem. We also apply the techniques used in the proof of Theorem~\ref{Sym} based on a construction of Hartley and Leemans available in~\cite{HL}. The key of the proof of Theorem~\ref{Sym} was to start from the CPR graph of the $(n-1)$-simplex with generators $\rho_1, \ldots, \rho_{n-1}$ where $\rho_i$ is the transposition $(i,i+1)$ in $S_n$. Let $d=n-1$. At each step, we start with a string C-group representation of rank $d$ and generators $\rho_1, \ldots, \rho_d$. We replace $\rho_{d-2}$ by $\rho_{d-2}\rho_{d}$ and we drop $\rho_{d}$. As proved in~\cite{fl}, we get in this way a new string C-group with generators $\rho_1, \ldots, \rho_{d-1}$.
We can repeat this until $d=3$.
We give in Table~\ref{sym7} an example of this process for $S_7$.
\begin{table}
\begin{center}
\begin{tabular}{||c|c|c||}
\hline
Generators&CPR graph&Schl\"afli type\\
\hline
(1,2),(2,3),(3,4),(4,5),(5,6),(6,7)&
$ \xymatrix@-1.7pc{*+[o][F]{}  \ar@{-}[rr]^1 && *+[o][F]{}  \ar@{-}[rr]^2 && *+[o][F]{} \ar@{-}[rr]^{3} && *+[o][F]{}  \ar@{-}[rr]^{4} && *+[o][F]{} \ar@{-}[rr]^5 && *+[o][F]{} \ar@{-}[rr]^{6} && *+[o][F]{}  }$
&\{3,3,3,3,3\}\\
(1,2),(2,3),(3,4),(4,5)(6,7),(5,6)&
$ \xymatrix@-1.7pc{*+[o][F]{}  \ar@{-}[rr]^1 && *+[o][F]{}  \ar@{-}[rr]^2 && *+[o][F]{} \ar@{-}[rr]^{3} && *+[o][F]{}  \ar@{-}[rr]^{4} && *+[o][F]{} \ar@{-}[rr]^5 && *+[o][F]{} \ar@{-}[rr]^{4} && *+[o][F]{}  }$
&\{3,3,6,4\}\\
(1,2),(2,3),(3,4)(5,6),(4,5)(6,7)&
$ \xymatrix@-1.7pc{*+[o][F]{}  \ar@{-}[rr]^1 && *+[o][F]{}  \ar@{-}[rr]^2 && *+[o][F]{} \ar@{-}[rr]^{3} && *+[o][F]{}  \ar@{-}[rr]^{4} && *+[o][F]{} \ar@{-}[rr]^3 && *+[o][F]{} \ar@{-}[rr]^{4} && *+[o][F]{}  }$
&\{3,6,5\}\\
(1,2),(2,3)(4,5)(6,7),(3,4)(5,6)&
$ \xymatrix@-1.7pc{*+[o][F]{}  \ar@{-}[rr]^1 && *+[o][F]{}  \ar@{-}[rr]^2 && *+[o][F]{} \ar@{-}[rr]^{3} && *+[o][F]{}  \ar@{-}[rr]^{2} && *+[o][F]{} \ar@{-}[rr]^3 && *+[o][F]{} \ar@{-}[rr]^{2} && *+[o][F]{}  }$
&\{6,6\}\\
\hline
\end{tabular}
\caption{The induction process used on $S_7$}\label{sym7}
\end{center}
\end{table}

In order to prove that the permutation groups of our main theorem are isomorphic to alternating groups we use the following results.

\begin{theorem}\cite{GJ} \label{GJ} Let $G$ be a primitive permutation group of finite degree $n$, containing a
cycle of prime length fixing at least three points. Then $G\geq A_n$.
\end{theorem}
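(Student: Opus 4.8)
This is a theorem of Jordan type, and the plan is to show that the hypotheses force $G$ to be multiply transitive of high degree and then to appeal to the classification of such groups. Write $\sigma\in G$ for the given cycle, let $p$ be its (prime) length, let $\Delta$ be its support with $|\Delta|=p$, and let $\Phi=\{1,\ldots,n\}\setminus\Delta$ be its fixed-point set, so that $|\Phi|=n-p\geq 3$ by hypothesis. The cyclic group $\langle\sigma\rangle$ is contained in the pointwise stabilizer $G_{(\Phi)}$ and acts transitively (indeed regularly) on $\Delta$. In the language of Jordan, $\Delta$ is therefore a \emph{Jordan set} for the primitive group $G$, with complement $\Phi$ of size at least $3$.

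First I would upgrade primitivity to multiple transitivity using the classical theory of Jordan sets: a primitive group admitting a proper nontrivial Jordan set is $2$-transitive, and, iterating Jordan's argument (intersections and translates of a Jordan set are again Jordan sets), one bootstraps the transitivity of the action induced on $\Delta$ together with the $|\Phi|$ complementary points. The outcome is that $G$ is highly transitive, in fact $(n-p+1)$-transitive, hence at least $4$-transitive since $n-p\geq 3$. This is the heart of the classical Jordan machinery and is where the combinatorics of Jordan sets does the real work.

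With high transitivity in hand, I would finish using the classification of multiply transitive groups, which rests on the Classification of Finite Simple Groups: the only $4$-transitive groups of degree $n$ are $A_n$, $S_n$, and the Mathieu groups $M_{11},M_{12},M_{23},M_{24}$. To exclude the Mathieu groups I would inspect their cycle types and observe that none of them contains a single cycle of prime length fixing at least three points (concretely, in each of these groups an element of prime order never has cycle type $p^{1}1^{m}$ with $m\geq 3$). Thus $G\in\{A_n,S_n\}$, which gives $G\geq A_n$ as required.

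The step I expect to be the main obstacle is the exclusion of the exceptional multiply transitive groups: multiple transitivity alone does not force $G\geq A_n$, and the delicate point is precisely that the hypothesis that $\sigma$ fixes at least three points is what rules out the sporadic possibilities, while simultaneously keeping the support small enough for the Jordan-set induction to apply. Carrying this out without invoking the full Classification is possible through Wielandt's theory of Jordan sets, but it then requires a careful induction that decreases the cycle length $p$, and controlling that induction is the technically demanding part.
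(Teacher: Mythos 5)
The statement you are proving is not proved in the paper at all: it is quoted from Jones \cite{GJ}, and in the prime-length form stated here it is Jordan's classical theorem (1873), which admits an elementary, pre-classification proof. So your proposal has to stand on its own, and it contains one genuine gap, located exactly at the step you describe as ``the heart of the classical Jordan machinery.''

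The gap is the transitivity bootstrap. The first half of your claim (a finite primitive group with a proper nontrivial Jordan set is $2$-transitive) is correct, but the iteration ``\ldots hence $(n-p+1)$-transitive'' is false on the hypotheses you state. Counterexample: $G=\mathrm{PGL}(d,q)$ acting on $\mathrm{PG}(d-1,q)$ with $d\geq 3$. This group is primitive, and the complement $\Delta$ of a hyperplane $H$ is a Jordan set, since the elations with axis $H$ fix $H$ pointwise and act transitively (indeed regularly) on $\Delta$; here $|H|=(q^{d-1}-1)/(q-1)\geq 3$, yet $G$ is not even $3$-transitive, because it preserves collinearity. So ``primitive with a Jordan set whose complement has size at least $3$'' does not imply $4$-transitivity, and no amount of translating and intersecting Jordan sets repairs this. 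What rescues the argument in the present situation is precisely the primality of $p$, which you never invoke at this step: the action induced by $G_{(\Phi)}$ on $\Delta$ is transitive of prime degree, hence \emph{primitive}, and the correct form of Jordan's theorem (a primitive group possessing a Jordan set on which the induced action is primitive, with complement of size $m$, is $(m+1)$-transitive) then gives $(n-p+1)$-transitivity. Your closing paragraph actually misassigns the roles of the hypotheses: the ``at least three fixed points'' condition controls how much transitivity you get and the exclusion of the Mathieu groups, while primality is what makes the Jordan induction run at all. Once this is fixed, the rest of your plan does work: the list of $4$-transitive groups $A_n$, $S_n$, $M_{11}$, $M_{12}$, $M_{23}$, $M_{24}$ is correct (modulo CFSG), and none of the four Mathieu groups contains a cycle of prime length fixing at least three points, as you can check from their cycle types. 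Note, however, that this finish is far heavier than the classical one, which needs no classification: from the high transitivity one manufactures a $3$-cycle as a product of two conjugates of $\sigma$ whose supports overlap in $p-1$ points, and a primitive group containing a $3$-cycle contains $A_n$.
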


\begin{proposition}\label{0101tail}\cite[Proposition 3.3]{flm2} Let $G=\langle \rho_0,\ldots,\rho_{r-1}\rangle$ be a transitive permutation group acting on the points $\{1,\ldots, n\}$ with $n\geq 5$, and let 
$G^*=\langle\rho_0,\ldots,\rho_{r-1}, \rho_r, \rho_{r+1}\rangle$, where
\begin{center}
\begin{tabular}{l}
$\rho_r = (i, n+1)(n+2, n+3)$ for some $i\in\{1,\ldots, n\}$\\
$\rho_{r+1}=(n+1, n+2)(n+3, n+4)$.
\end{tabular}
\end{center}
Then $G^*$ is isomorphic to $S_{n+4}$ if it contains an odd permutation, and to $A_{n+4}$ otherwise.
\end{proposition}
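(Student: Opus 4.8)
The plan is to show that $G^*$ is a primitive permutation group on $\{1,\ldots,n+4\}$ containing a cycle of prime length that fixes at least three points; then Theorem~\ref{GJ} forces $A_{n+4}\leq G^*\leq S_{n+4}$. The stated dichotomy is immediate from this, since $A_{n+4}$ has index $2$ in $S_{n+4}$, so $G^*=S_{n+4}$ exactly when $G^*$ meets the odd permutations and $G^*=A_{n+4}$ otherwise.

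First I would record the ``tail'' structure: the generators $\rho_r$ and $\rho_{r+1}$ attach the four new points along the path $i,n+1,n+2,n+3,n+4$ with the alternating labels $r,r+1,r,r+1$. Hence, starting from the $G$-orbit $\{1,\ldots,n\}$, one reaches $n+1$ by $\rho_r$ and then $n+2,n+3,n+4$ successively by $\rho_{r+1},\rho_r,\rho_{r+1}$; since $G$ is transitive on $\{1,\ldots,n\}$, this shows $G^*$ is transitive. For the prime cycle I would compute $\rho_r\rho_{r+1}$ directly and verify that it equals the $5$-cycle $(i,n+2,n+4,n+3,n+1)$. This has prime length and fixes every point of $\{1,\ldots,n\}\setminus\{i\}$, hence at least $n-1\geq 4$ points.

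The main work, and the step I expect to be the real obstacle, is primitivity. Here I would consider $H:=\langle\rho_0,\ldots,\rho_r\rangle$, which fixes the point $n+4$ and therefore stabilises setwise the block $B$ of any $G^*$-block system that contains $n+4$. A short orbit computation shows that $H$ has exactly three orbits: $O_1=\{1,\ldots,n,n+1\}$ of size $n+1$ (the old points are fused by transitivity of $G$ and joined to $n+1$ through $\rho_r$), $O_2=\{n+2,n+3\}$ of size $2$, and $O_3=\{n+4\}$. Since $B$ is $H$-invariant it is a union of these orbits containing $O_3$, so $|B|\in\{1,\,3,\,n+2,\,n+4\}$. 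The values $1$ and $n+4$ give the trivial block systems; $|B|=n+2$ is impossible because $|B|$ must divide $n+4$ while $n+2\nmid n+4$ for $n\geq 5$; and the remaining case $B=\{n+2,n+3,n+4\}$ is eliminated by applying $\rho_{r+1}$, since $B\rho_{r+1}=\{n+1,n+3,n+4\}$ meets $B$ without equalling it, contradicting the defining property of a block. Hence $G^*$ is primitive.

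With transitivity, primitivity, and the $5$-cycle in hand, Theorem~\ref{GJ} yields $G^*\geq A_{n+4}$, and the parity dichotomy described above completes the argument. The only delicate point is the orbit bookkeeping for $H$ together with the divisibility constraint on $|B|$; everything else reduces to a routine verification.
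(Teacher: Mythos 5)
Your proof is correct: the computation of $\rho_r\rho_{r+1}$ as a $5$-cycle fixing $n-1\geq 4$ points, the orbit bookkeeping for $H=\langle\rho_0,\ldots,\rho_r\rangle$ together with the divisibility and $\rho_{r+1}$-translation arguments ruling out nontrivial blocks, and the final appeal to Theorem~\ref{GJ} are all valid, and the parity dichotomy follows as you say. This is essentially the intended argument: the paper itself does not reprove this proposition (it is quoted from \cite{flm2}), but the tool it sets up immediately beforehand, Theorem~\ref{GJ}, is precisely the one your primitivity-plus-prime-cycle argument feeds into.
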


\begin{proposition}\label{Sym2(1)}
The following graph, with $n\geq 8$ vertices  and $r\in\{3,\ldots, \frac{n-2}{2}\}$, is a CPR graph for $(S_{\frac{n-4}{2}}\times S_{\frac{n+4}{2}})^+$.
$$\xymatrix@-0.5pc{*+[o][F]{}  \ar@{-}[r]^0& *+[o][F]{}  \ar@{-}[r]^1&*+[o][F]{} \ar@{-}[r]^0& *+[o][F]{}  \ar@{-}[r]^1 & *+[o][F]{}  \ar@{-}[r]^0&*+[o][F]{}  \ar@{-}[r]^1&*+[o][F]{}   \ar@{.}[r] & *+[o][F]{}  \ar@{-}[r]^0 & *+[o][F]{}  \ar@{-}[r]^1 & *+[o][F]{}  \ar@{-}[r]^2 & *+[o][F]{} \ar@{.}[r] & *+[o][F]{_{}}  \ar@{-}[r]^{r-2}& *+[o][F]{_{}}  \ar@{-}[r]^{r-1}  & *+[o][F]{} \\
& & &&*+[o][F]{}  \ar@{-}[r]_0 & *+[o][F]{}  \ar@{-}[r]_1 &*+[o][F]{} \ar@{.}[r] & *+[o][F]{}  \ar@{-}[r]_0& *+[o][F]{}  \ar@{-}[r]_1 & *+[o][F]{}  \ar@{-}[r]_2 & *+[o][F]{}  \ar@{.}[r] & *+[o][F]{}  \ar@{-}[r]_{r-2} & *+[o][F]{}  \ar@{-}[r]_{r-1} & *+[o][F]{}} $$
\end{proposition}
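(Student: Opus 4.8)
The plan is to read the two factors off the graph, identify the group by combining Theorem~\ref{Sym} with Goursat's lemma and a parity count, and finally establish the intersection property by induction on $r$.

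First I would note that no edge joins the two rows, so the graph has exactly two connected components, a top path $\mathcal T$ and a bottom path $\mathcal B$; hence $G=\langle\rho_0,\dots,\rho_{r-1}\rangle$ is intransitive and preserves the partition of the $n$ points into $V(\mathcal T)$ and $V(\mathcal B)$. Counting vertices (the top path carries four more vertices than the bottom, owing to its longer alternating $0,1$ prefix) gives $|V(\mathcal T)|=\frac{n+4}{2}$ and $|V(\mathcal B)|=\frac{n-4}{2}$. That $(G,S)$ is a sggi is immediate from the picture: each label induces a matching, and for $|i-j|\ge 2$ every component of the $\{i,j\}$-subgraph is a vertex, an edge, or an alternating square, so the commuting property holds.

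To identify $G$, I would observe that each of $\mathcal T$ and $\mathcal B$ is, on its own support, an instance of the permutation representation graph of Theorem~\ref{Sym}: an alternating $0,1$ segment followed by the simplex tail $0,1,2,\dots,r-1$ (in the extremal cases the prefix degenerates and the component is a plain simplex, which still gives a symmetric group). Hence the restriction of $G$ to $V(\mathcal T)$ is $S_{(n+4)/2}$ and its restriction to $V(\mathcal B)$ is $S_{(n-4)/2}$, so $G$ is a subdirect product of $S_{(n+4)/2}\times S_{(n-4)/2}$. Since $\frac{n+4}{2}\ge 6$, the only proper quotients of $S_{(n+4)/2}$ are the trivial group and $\mathbb{Z}/2$, and as $S_{(n+4)/2}\not\cong S_{(n-4)/2}$ their only common proper quotient is the sign quotient $\mathbb{Z}/2$; by Goursat's lemma $G$ is therefore either the whole direct product or its index-two even subgroup $(S_{(n-4)/2}\times S_{(n+4)/2})^+$. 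To pin down the even one, I would check parities: each $\rho_i$ with $i\ge 2$ uses exactly one edge in each tail and so is a product of two transpositions, hence even, and a direct count of the $0$-edges and $1$-edges (using that $n$ is even) shows $\rho_0$ and $\rho_1$ are even as well. Thus $G\le A_n$ and $G=(S_{(n-4)/2}\times S_{(n+4)/2})^+$.

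It remains to verify the intersection property, which is what upgrades $\mathcal G$ to a CPR graph, and this is the step I expect to be the main obstacle. I would proceed by induction on $r$, the base case $r=3$ following from the known rank-three criteria. For the inductive step I would apply Proposition~\ref{max} to $\Gamma=G$: deleting the label-$(r-1)$ edges isolates the last vertex of each tail, so $G_{r-1}=\langle\rho_0,\dots,\rho_{r-2}\rangle$ fixes two points and acts on the remaining $n-2$ points as the rank-$(r-1)$ graph of the same shape, which is a string C-group by the induction hypothesis, while $\rho_{r-1}\notin G_{r-1}$ because only $\rho_{r-1}$ moves the two isolated points. Deleting instead the label-$0$ edges decomposes each component into a simplex tail on the labels $1,\dots,r-1$ together with single $1$-edges, from which one reads that $G_0=\langle\rho_1,\dots,\rho_{r-1}\rangle$ is again of the product type already analysed and hence a string C-group. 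The genuinely delicate point is the last hypothesis of Proposition~\ref{max}, namely that $G_{0,r-1}=\langle\rho_1,\dots,\rho_{r-2}\rangle$ is maximal in $G_0$; establishing this maximality --- equivalently, computing $G_0\cap G_{r-1}$ and matching it with $G_{0,r-1}$ as in Proposition~\ref{arp} --- is where the real work lies, after which Proposition~\ref{max} gives that $(G,S)$ is a string C-group and $\mathcal G$ is a CPR graph for $(S_{(n-4)/2}\times S_{(n+4)/2})^+$.
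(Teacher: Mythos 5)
Your identification of the full group $G$ is correct and takes a genuinely different route from the paper's: you invoke Theorem~\ref{Sym} on each connected component, then combine Goursat's lemma with a parity count of the generators, whereas the paper manufactures two $3$-cycles explicitly (from $(\rho_1\rho_2)^2$ and a conjugate of it by a suitable power of $\rho_0\rho_1$), deduces $A_{(n-4)/2}\times A_{(n+4)/2}\leq G$, and finishes with the same parity observation. For this part your argument is valid and arguably cleaner. The problems lie in the intersection-property part, which is where the proposition actually has content.

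First, your description of $G_0$ is wrong. Deleting the $0$-edges leaves, inside \emph{each} of the two components, a path with edge labels $1,2,\ldots,r-1$ on $r$ vertices, together with some isolated $1$-edges. The two surviving paths carry \emph{identical} label sequences, so each $\rho_i$ with $i\geq 1$ acts on the two paths in the same way under the obvious identification, and hence so does every element of $G_0$. Thus $G_0$ is a \emph{diagonal} copy of $S_r$, sesqui-extended (Proposition~\ref{sesqui}) by the product of the isolated $1$-edges; it is isomorphic to $S_r$ or $S_r\times 2$, and is certainly not ``of the product type already analysed'': it contains no element acting nontrivially on one path and trivially on the other. Your own Goursat analysis should have flagged this: once the two components of a graph are isomorphic as labeled graphs, the diagonal subdirect product is allowed, and here it is exactly what occurs. ($G_0$ is still a string C-group, but because it is a simplex group up to a sesqui-extension, not for the reason you give.)

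Second, and decisively, the proposal stops at the very step the proposition turns on: you declare that proving $G_{0,r-1}$ maximal in $G_0$ (equivalently, $G_0\cap G_{r-1}=G_{0,r-1}$) is ``where the real work lies'' and never carry it out. That step \emph{is} the intersection property, so the proof is incomplete. Worse, with your identification of $G_0$ it could not be completed: if $G_0$ really were of product type, then $G_{0,r-1}=\langle\rho_1,\ldots,\rho_{r-2}\rangle$, which genuinely is a diagonal $S_{r-1}$ (up to the sesqui factor), would sit inside the proper chain $\mathrm{diag}(S_{r-1})<\mathrm{diag}(S_r)<(S_r\times S_r)^+$ and so would not be maximal, and Proposition~\ref{max} would not apply. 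With the correct diagonal description the step closes, and this is essentially what the paper does: $G_{r-1}$ fixes the last vertex of each tail, so $G_0\cap G_{r-1}$ lies in the stabilizer in $G_0$ of those two (identified) vertices, which in the diagonal $S_r$ is precisely the group generated by $\rho_1,\ldots,\rho_{r-2}$ together with the sesqui factor, i.e.\ $G_{0,r-1}$; Proposition~\ref{arp} then gives the intersection property.
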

\begin{proof}
Let $G$ be the group with the permutation representation given by the graph of this proposition.
Let us first consider $r=3$.
$$\xymatrix@-0.5pc{*+[o][F]{}  \ar@{-}[r]^0& *+[o][F]{}  \ar@{-}[r]^1&*+[o][F]{} \ar@{-}[r]^0& *+[o][F]{}  \ar@{-}[r]^1 & *+[o][F]{}  \ar@{-}[r]^0&*+[o][F]{}  \ar@{-}[r]^1&*+[o][F]{}   \ar@{.}[r] & *+[o][F]{}  \ar@{-}[r]^0 & *+[o][F]{3}  \ar@{-}[r]^1 &   *+[o][F]{2} \ar@{-}[r]^2 &*+[o][F]{1}  \\
& & &&*+[o][F]{}  \ar@{-}[r]_0 & *+[o][F]{}  \ar@{-}[r]_1 &*+[o][F]{} \ar@{.}[r] & *+[o][F]{}  \ar@{-}[r]_0& *+[o][F]{4}  \ar@{-}[r]_1 &*+[o][F]{5}  \ar@{-}[r]_2 &*+[o][F]{6}  }$$

We have that $G_0$  and $G_2$ are string C-groups and  as $G_0\cap G_2=G_{0,2}\cong C_2$, $G$ is itself a string C-group
by Proposition~\ref{arp}.

Let us prove that is is isomorphic to $(S_{\frac{n-4}{2}}\times S_{\frac{n+4}{2}})^+$. 
We first prove that $G$ contains the  3-cycles $(1,2,3)$ and $(4,5,6)$ (the vertices of the above graph on the right). 
Let $l$ be the least integer such that $(\rho_0\rho_1)^l$ fixes all the vertices of the component  of the graph on the bottom. We have that $(\rho_1\rho_2)^2=(1,2,3)(4,5,6)$. The latter element conjugated by $(\rho_0\rho_1)^l$ is equal to $\alpha=(a,b,c)(4,5,6)$ with $\{a,b,c\}\cap\{1,2,3\}=\{1\}$. Hence $(\alpha(\rho_1\rho_2)^2)^5=(4,6,5)$ and $(1,2,3)=(4,6,5)(\rho_1\rho_2)^2$.

Now by transitivity in each of the two components of the graph we have that  $A_{\frac{n-4}{2}}\times A_{\frac{n+4}{2}}$ is a subgroup of $G$. As in addition $\rho_2\notin A_{\frac{n-4}{2}}\times A_{\frac{n+4}{2}}$ and $G$ is an even group we have that $G$ is isomorphic to $(S_{\frac{n-4}{2}}\times S_{\frac{n+4}{2}})^+$. 

Now let $r>3$.
We may assume by induction that $G_{r-1}$ is a string C-group isomorphic to $(S_{\frac{n-6}{2}}\times S_{\frac{n+2}{2}})^+$. 
In addition $G_0$ is a string C-group isomorphic to $S_{r-1}$.  By the intersection of the orbits of $G_0$ and $G_{r-1}$ we conclude that  $G_0\cap G_{r-1}$ and   $G_{0,r-1}$ are both isomorphic to $S_{r-2}$. Therefore $G$ is a string C-group. Moreover it is clearly isomorphic to $(S_{\frac{n-4}{2}}\times S_{\frac{n+4}{2}})^+$. 
\end{proof}

\begin{proposition}\label{Sym2}
The following graph, with $n\geq 10$ vertices  and $r\in\{4,\ldots, \frac{n-2}{2}\}$, is a CPR graph for $S_n$.
$$\xymatrix@-0.5pc{*+[o][F]{}  \ar@{-}[r]^0& *+[o][F]{}  \ar@{-}[r]^1&*+[o][F]{} \ar@{-}[r]^0& *+[o][F]{}  \ar@{-}[r]^1 & *+[o][F]{}  \ar@{-}[r]^0&*+[o][F]{}  \ar@{-}[r]^1&*+[o][F]{}   \ar@{.}[r] & *+[o][F]{}  \ar@{-}[r]^0 & *+[o][F]{}  \ar@{-}[r]^1 & *+[o][F]{}  \ar@{-}[r]^2 & *+[o][F]{} \ar@{.}[r] & *+[o][F]{_{}}  \ar@{-}[r]^{r-2}& *+[o][F]{_{}}  \ar@{-}[r]^{r-1}  & *+[o][F]{} \\
& & &&*+[o][F]{}  \ar@{-}[r]_0 & *+[o][F]{}  \ar@{-}[r]_1 &*+[o][F]{} \ar@{.}[r] & *+[o][F]{}  \ar@{-}[r]_0& *+[o][F]{}  \ar@{-}[r]_1 & *+[o][F]{}  \ar@{-}[r]_2 & *+[o][F]{}  \ar@{.}[r] & *+[o][F]{}  \ar@{-}[r]_{r-2} & *+[o][F]{}  \ar@{-}[r]_{r-1} & *+[o][F]{} \ar@{-}[u]_{r-2} } $$
\end{proposition}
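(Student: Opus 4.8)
The plan is to follow the same inductive strategy used in Proposition~\ref{Sym2(1)}, exploiting the fact that the graph of Proposition~\ref{Sym2} is obtained from that of Proposition~\ref{Sym2(1)} by attaching a single extra $(r-2)$-edge at the bottom-right vertex. Concretely, let $G$ be the group defined by the permutation representation graph of this proposition, and let $G'$ denote the group of Proposition~\ref{Sym2(1)} on the same vertex set (same graph minus the final $(r-2)$-edge). Since we already know $G' \cong (S_{(n-4)/2}\times S_{(n+4)/2})^+$, and $G$ differs from $G'$ only by composing $\rho_{r-2}$ with one extra transposition joining the two bottom strands, I first want to show that this extra edge glues the two symmetric factors together into a single $S_n$. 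The natural way is to produce an explicit odd permutation and a suitable cycle, then invoke transitivity together with Theorem~\ref{GJ}: once $G$ is shown to be transitive and primitive on the $n$ points and to contain, say, a $3$-cycle (or a transposition generating enough of the action), one concludes $G\geq A_n$, and the presence of an odd permutation (the extra transposition makes $\rho_{r-2}$ odd) forces $G=S_n$.

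For the isomorphism type I would argue as follows. First establish transitivity of $G$ on all $n$ vertices: the two components of Proposition~\ref{Sym2(1)} are each transitive, and the extra $(r-2)$-edge links a vertex of the top strand to a vertex of the bottom strand, so $G$ becomes transitive on the whole set. Next, reusing the commutator computation from the proof of Proposition~\ref{Sym2(1)} (which produced the $3$-cycles $(1,2,3)$ and $(4,5,6)$), I would show $G$ contains a short cycle of prime length fixing at least three points, and that $G$ is primitive — the latter because the block system forced by the two strands in $G'$ is destroyed once the strands are connected by the new edge. Then Theorem~\ref{GJ} gives $A_n\leq G$. Since $\rho_{r-2}$ is now a product of transpositions of total odd parity, $G$ contains an odd permutation, whence $G=S_n$.

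For the string C-group property I would invoke the inductive machinery already in place. The subgroup $G_{r-1}$ is obtained by deleting the highest label, which returns (up to the attached edge) a graph of the same shape on fewer points, so by induction $G_{r-1}$ is a string C-group. The subgroup $G_0$ is a string C-group of type corresponding to a smaller symmetric group, as in the previous proposition. I would then check the single intersection condition $G_0\cap G_{r-1}\leq G_{0,r-1}$ by comparing the orbits of $G_0$ and $G_{r-1}$, exactly as in Proposition~\ref{Sym2(1)}, or alternatively apply Proposition~\ref{max}: if $G_{0,r-1}$ is maximal in $G_0$ and $\rho_{r-1}\notin G_{r-1}$, the string C-group property follows immediately.

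The step I expect to be the main obstacle is verifying primitivity (and thereby the hypotheses of Theorem~\ref{GJ}) cleanly, since the extra $(r-2)$-edge must be shown to genuinely fuse the two strands rather than preserve some coarser block structure; once primitivity together with a prime-length cycle fixing three points is in hand, the identification $G=S_n$ is automatic, and the string C-group verification reduces to the same orbit/maximality bookkeeping already carried out in Proposition~\ref{Sym2(1)}.
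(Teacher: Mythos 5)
Your overall strategy coincides with the paper's: prove $G\cong S_n$ via transitivity, primitivity, a $3$-cycle and an odd permutation combined with Theorem~\ref{GJ}, then establish the string C-group property by showing $G_0$ and $G_{r-1}$ are string C-groups and comparing orbits to get $G_0\cap G_{r-1}=G_{0,r-1}$. However, there are two genuine gaps. The first is your identification of $G_0$ as ``a smaller symmetric group, as in the previous proposition.'' This is false, and it is precisely the point where the extra edge changes the structure: deleting the $0$-edges from the graph of Proposition~\ref{Sym2} leaves the two tails labelled $1,2,\ldots,r-1$ \emph{joined} to each other by the new vertical $(r-2)$-edge, so together they form a single path on $2r$ vertices (plus isolated $1$-edges coming from the alternating parts), and $G_0\cong 2\times(2\wr S_{r-1})$, an imprimitive wreath-type group rather than a symmetric group. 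The paper establishes that this $G_0$ is a string C-group by recognizing it as a sesqui-extension and invoking Proposition~\ref{sesqui} together with Table~2 of~\cite{flm2}; your argument, as written, has no substitute for this step, and both the string C-group property of $G_0$ and the computation of $G_0\cap G_{r-1}$ depend on knowing what $G_0$ actually is.

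The second gap is primitivity, which you yourself flag as the main obstacle but for which you offer only the heuristic that the block system of the two strands ``is destroyed'' by the new edge. That rules out one particular partition, not all of them, so it does not prove primitivity. The paper's proof is concrete and short: the stabilizer of the leftmost vertex $x$ contains the conjugates $\rho_2^{(\rho_0\rho_1)^k}$, which fix $x$ and fuse the orbits of $G_0$, so $G$ is $2$-transitive and hence primitive; then the $3$-cycle from Proposition~\ref{Sym2(1)} and the odd permutation $\rho_{r-2}$ (your parity observation here is correct) give $G\cong S_n$ by Theorem~\ref{GJ}. A smaller inaccuracy: your treatment of $G_{r-1}$ ``by induction'' is not quite right either, since deleting the label $r-1$ detaches the vertical $(r-2)$-edge from both strands; thus $G_{r-1}$ is a sesqui-extension (with respect to $\rho_{r-2}$) of the group of Proposition~\ref{Sym2(1)} on $n-2$ vertices, and one needs Proposition~\ref{sesqui}, not induction on the present statement, to transfer the string C-group property.
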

\begin{proof}
The permutation representation graph is connected, hence $G$, the group with the permutation representation graph given in this lemma, is transitive.  Let $x$ be the first point on the left of the graph. 
The stabilizer of $x$ has at most the same orbits as $G_0$. Consider the vertices $y$ and $z$ as in the following graph.  $$\xymatrix@-0.5pc{*+[o][F]{x}  \ar@{-}[r]^0& *+[o][F]{}  \ar@{-}[r]^1&*+[o][F]{} \ar@{-}[r]^0& *+[o][F]{}  \ar@{-}[r]^1 & *+[o][F]{}  \ar@{-}[r]^0&*+[o][F]{}  \ar@{-}[r]^1&*+[o][F]{}   \ar@{.}[r] & *+[o][F]{y}  \ar@{-}[r]^0 & *+[o][F]{}  \ar@{-}[r]^1 & *+[o][F]{}  \ar@{-}[r]^2 & *+[o][F]{z} \ar@{.}[r] & *+[o][F]{_{}}  \ar@{-}[r]^{r-2}& *+[o][F]{_{}}  \ar@{-}[r]^{r-1}  & *+[o][F]{} \\
& & &&*+[o][F]{}  \ar@{-}[r]_0 & *+[o][F]{}  \ar@{-}[r]_1 &*+[o][F]{} \ar@{.}[r] & *+[o][F]{}  \ar@{-}[r]_0& *+[o][F]{}  \ar@{-}[r]_1 & *+[o][F]{}  \ar@{-}[r]_2 & *+[o][F]{}  \ar@{.}[r] & *+[o][F]{}  \ar@{-}[r]_{r-2} & *+[o][F]{}  \ar@{-}[r]_{r-1} & *+[o][F]{} \ar@{-}[u]_{r-2} } $$
We have that $y\rho_2^{\rho_1\rho_0}=z$ and $\rho_2^{\rho_1\rho_0}$ fixes $x$.  More generally the appropriate conjugations of $\rho_2$ by powers of $\rho_0\rho_1$ fuse the orbits of $G_0$ while fixing $x$.
Hence $G$ is 2-transitive and therefore primitive. Moreover, it contains a 3-cycle (explicitly  given in the proof of Proposition~\ref{Sym2(1)}) and an odd permutation. Hence, by Theorem~\ref{GJ}, it is isomorphic to $S_{n-1}$.
By Proposition~\ref{sesqui} and~\cite[Table 2]{flm2} we may conclude that $G_0$ is a string C-group  isomorphic to $2\times (2\wr S_{r-1})$. 
By Proposition~\ref{Sym2(1)}, the group $G_{r-1}$ is a string C-group isomorphic to $(S_{\frac{n-6}{2}}\times S_{\frac{n+2}{2}})^+$.
From the intersection of the orbits of $G_0$ and $G_{r-1}$ we also conclude that $G_0\cap G_{r-1}=G_{0,r-1}\cong 2\times(S_{\frac{n-7}{2}}\times S_{\frac{n+1}{2}})^+$.
Hence $G$ is a string C-group.
\end{proof}

\begin{proposition}\label{Sym3}
The following graph, with $n\geq 8$ (and $r=n/2$) vertices is a CPR graph for $S_n$.
$$\xymatrix@-0.7pc{ *+[o][F]{}  \ar@{-}[r]^0 &*+[o][F]{}  \ar@{-}[r]^1 & *+[o][F]{} \ar@{-}[r]^2 & *+[o][F]{} \ar@{.}[r] & *+[o][F]{}  \ar@{-}[r]^{r-2}& *+[o][F]{}  \ar@{-}[r]^{r-1}  & *+[o][F]{}    \\
 & &*+[o][F]{}  \ar@{-}[r]_2 & *+[o][F]{}  \ar@{.}[r] & *+[o][F]{}  \ar@{-}[r]_{r-2} & *+[o][F]{}  \ar@{-}[r]_{r-1} & *+[o][F]{}  \ar@{}\ar@{-}[u]_{r-2}  } $$
\end{proposition}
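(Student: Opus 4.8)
The plan is to mirror the three-step method used for Propositions~\ref{Sym2(1)} and~\ref{Sym2}: first verify that $(G,S)$ is a sggi, then identify $G$ with $S_n$, and finally establish the intersection property through Proposition~\ref{arp}. Label the $r+1$ vertices of the top path $a_0,\dots,a_r$ and the $r-1$ vertices of the bottom path $b_2,\dots,b_r$, so that $\rho_0=(a_0,a_1)$, $\rho_1=(a_1,a_2)$, and $\rho_i=(a_i,a_{i+1})(b_i,b_{i+1})$ for $2\le i\le r-1$, where in addition the vertical edge forces $\rho_{r-2}$ to carry the extra transposition $(a_r,b_r)$. First I would note that whenever $|i-j|\ge 2$ the supports of $\rho_i$ and $\rho_j$ are disjoint, so every two-colour subgraph is a matching; hence $(G,S)$ is a sggi with a string diagram.

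To identify $G$, the graph is connected, so $G$ is transitive, and the subgroup $G_0=\langle\rho_1,\dots,\rho_{r-1}\rangle$ that fixes $a_0$ is already transitive on the remaining $n-1$ vertices: the two paths are each connected and $\rho_{r-2}$ glues them through $(a_r,b_r)$. Thus $G$ is $2$-transitive, hence primitive. Since $\rho_0\rho_1=(a_0,a_2,a_1)$ is a $3$-cycle fixing $n-3\ge 5$ points, Theorem~\ref{GJ} gives $G\ge A_n$, and because $\rho_0$ is a transposition, $G=S_n$.

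For the intersection property I would apply Proposition~\ref{arp}, which requires recognising $G_0$ and $G_{r-1}$ as string C-groups. The group $G_0\cong S_{n-1}$ by the very argument just given (transitive on $n-1$ points, primitive, containing a $3$-cycle and a transposition), and it is a string C-group by induction on $n$. For $G_{r-1}=\langle\rho_0,\dots,\rho_{r-2}\rangle$ the orbits are $\{a_0,\dots,a_{r-1}\}$, $\{b_2,\dots,b_{r-1}\}$ and $\{a_r,b_r\}$; since the block $\{a_r,b_r\}$ is moved only by $\rho_{r-2}$, a sesqui-extension argument (Proposition~\ref{sesqui}) reduces the analysis of $G_{r-1}$ to that of the ``two parallel paths'' group obtained by deleting the transposition $(a_r,b_r)$, which is isomorphic to $S_r\times S_{r-2}$ and is itself a string C-group. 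It then remains to check $G_0\cap G_{r-1}=G_{0,r-1}$, where $G_{0,r-1}=\langle\rho_1,\dots,\rho_{r-2}\rangle$ has orbits $\{a_0\}$, $\{a_1,\dots,a_{r-1}\}$, $\{b_2,\dots,b_{r-1}\}$ and $\{a_r,b_r\}$. The inclusion $\supseteq$ is immediate, and for $\subseteq$ I would argue on orbits: an element of $G_0\cap G_{r-1}$ fixes $a_0$ and preserves each orbit of $G_{r-1}$, hence stabilises exactly the orbits of $G_{0,r-1}$, and a comparison of orders forces it into $G_{0,r-1}$.

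The hard part will be this last inclusion together with the precise identification of $G_{r-1}$. Because the actions on the three nontrivial orbits are coupled through $\rho_{r-2}$, the sesqui-extension collapses or not according to $r$ (for $r=4$ the central involution is absorbed, while for $r\ge 5$ it survives), so the isomorphism type and order of $G_{r-1}$ must be pinned down carefully before the order count ruling out ``diagonal'' elements of $G_0\cap G_{r-1}$ can be made rigorous. Once these orders are matched, Proposition~\ref{arp} yields that $(G,S)$ is a string C-group, completing the proof.
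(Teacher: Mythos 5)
You follow the same overall architecture as the paper --- prove $G\cong S_n$ via primitivity and Theorem~\ref{GJ}, treat $G_{r-1}$ as a sesqui-extension of the two-parallel-paths group, compare orbits and orders to get $G_0\cap G_{r-1}=G_{0,r-1}$, and conclude with Proposition~\ref{arp} --- but there is a genuine gap at the step ``$G_0$ is a string C-group by induction on $n$''. No such induction is available. After deleting the $0$-edge and shifting all labels down by one, the permutation representation graph of $G_0$ consists of a top path with labels $0,\dots,r-2$ on $r$ vertices and a bottom path with labels $1,\dots,r-2$ on $r-1$ vertices, joined by an edge of label $r-3$; this is not an instance of the family in the statement (whose bottom path starts at label $2$), and indeed it cannot be, since $G_0$ acts on $n-1=2r-1$ points, an odd number, while every graph of the family has an even number of vertices. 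Knowing $G_0\cong S_{n-1}$ as an abstract permutation group does not help: the intersection property is a property of the chosen generating involutions, not of the group. The paper closes exactly this point by citing Table~2 of \cite{flm2}, where this particular graph is shown to be a CPR graph for $S_{n-1}$; your proposal has no substitute for that citation, so the hypothesis of Proposition~\ref{arp} concerning $G_0$ is left unestablished.

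A second, smaller omission: you assert that the two-parallel-paths group (which is indeed $S_r\times S_{r-2}$; the paper's ``$S_{r-1}\times S_{r-3}$'' is an off-by-one slip in its text) ``is itself a string C-group''. That is true, but it is precisely where the paper's real inductive work lies: it is proved by induction on $r$, with base case $r=4$ giving $S_4\times S_2\cong 2\times S_4$ and Proposition~\ref{arp} applied at each step. A proposal that simply asserts it has skipped the core of the $G_{r-1}$ half of the argument. On the credit side: your identification of $G\cong S_n$ (2-transitivity, the $3$-cycle $\rho_0\rho_1$, Theorem~\ref{GJ}, and the odd generator $\rho_0$) is correct and independent of the paper's route, which instead deduces $G\cong S_n$ from $G_0\cong S_{n-1}$ plus transitivity; your orbit-plus-order-count plan for $G_0\cap G_{r-1}=G_{0,r-1}$ is sound and fleshes out what the paper leaves as a one-line remark; and your observation that $\tau=(a_r,b_r)$ lies outside $G_{r-1}$ for $r=4$ but inside it for $r\geq 5$ is correct and in fact more careful than the paper, since Proposition~\ref{sesqui}(2) as quoted requires $\tau\notin\Phi^*$ and therefore cannot be invoked verbatim when $r\geq 5$.
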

\begin{proof}
Let $G$ be the group with the permutation representation graph given in this proposition.
Removing the $0$-edge from the graph we get a CPR graph for a symmetric group of degree $n-1$ (see Table 2 of \cite{flm2}). Hence $G_0$ is a string C-group.
Now consider the group $H$ with the following permutation representation graph.
$$\xymatrix@-0.7pc{ *+[o][F]{}  \ar@{-}[r]^0 &*+[o][F]{}  \ar@{-}[r]^1 & *+[o][F]{} \ar@{-}[r]^2 & *+[o][F]{} \ar@{.}[r] & *+[o][F]{}  \ar@{-}[r]^{r-2}& *+[o][F]{}   \\
 & &*+[o][F]{}  \ar@{-}[r]_2 & *+[o][F]{}  \ar@{.}[r] & *+[o][F]{}  \ar@{-}[r]_{r-2} & *+[o][F]{}  } $$
For $r=4$, $H$ is a string C-group isomorphic to $2\times S_4$. Assume by induction that  $H_{r-2}$ is a string C-group isomorphic to $S_{r-1}\times S_{r-3}$.
As $H_0$ is a string C-group and $H_0\cap H_{r-2}\leq S_{r-2}\times S_{r-3}\cong H_{0,r-2}$, $H$ is a string C-group. Moreover  $H\cong S_{r-1}\times S_{r-3}$. Now by Proposition~\ref{sesqui} the group $G_{r-1}$ is a string C-group isomorphic to $2\times S_{r-1}\times S_{r-3}$. By the intersection of the orbits of $G_0$ and $G_{r-1}$ we have that $G_0\cap G_{r-1}=G_{0,r-1}$ Hence $G$ is a string C-group. 
As $G_0\cong S_{n-1}$ and stabilizes  the first vertex on the left, we have that $G\cong S_n$.
\end{proof}

\begin{proposition}\label{Sym4}
The following graph with $n$ vertices, $n\equiv 3 \mod 4$ and $n\geq 11$, is a CPR graph for $S_n$.
$$\xymatrix@-0.7pc{ *+[o][F]{}  \ar@{-}[r]^0 & *+[o][F]{}  \ar@{-}[r]^1 &*+[o][F]{}  \ar@{=}[r]^0_2 & *+[o][F]{} \ar@{-}[r]^1 &  *+[o][F]{}  \ar@{-}[r]^0 & *+[o][F]{}  \ar@{-}[r]^1 & *+[o][F]{}  \ar@{.}[r] & *+[o][F]{}  \ar@{-}[r]^1 &*+[o][F]{}  \ar@{-}[r]^2 & *+[o][F]{} \ar@{-}[r]^3 & *+[o][F]{}  } $$
\end{proposition}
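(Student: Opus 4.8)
The plan is to follow the two-phase template used for Propositions~\ref{Sym2(1)}--\ref{Sym3}: first identify the permutation group $G=\langle\rho_0,\rho_1,\rho_2,\rho_3\rangle$ determined by the graph as $S_n$, and then verify the intersection property so as to conclude that the graph is a CPR graph. Here $\rho_0,\rho_1,\rho_2,\rho_3$ are the four involutions read off from the edges of the given labels; the graph plainly satisfies the matching and component conditions of Section~\ref{sectioncpr} (the double edge carrying labels $0$ and $2$ is one of the permitted $\{0,2\}$-components), so $(G,S)$ is a rank-$4$ sggi to begin with.

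For the identification I would argue as in Proposition~\ref{Sym2}. The graph is connected, so $G$ is transitive. To get primitivity I would prove $2$-transitivity: the stabiliser of the leftmost vertex $x$ contains $G_0=\langle\rho_1,\rho_2,\rho_3\rangle$ (each of $\rho_1,\rho_2,\rho_3$ fixes $x$, which is moved only by $\rho_0$), and suitable conjugates of $\rho_2$ by powers of $\rho_0\rho_1$ fix $x$ while fusing the orbits of $G_0$ on the remaining points, forcing the point stabiliser to be transitive. With primitivity established, I would exhibit a $3$-cycle fixing at least three points — a short product of generators near the right-hand tail produces such an element, in the spirit of the explicit $3$-cycles of Proposition~\ref{Sym2(1)} — and invoke Theorem~\ref{GJ} to deduce $G\geq A_n$. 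Since the label-$3$ edge occurs exactly once, $\rho_3$ is a single transposition, hence odd, and together with $G\geq A_n$ this yields $G\cong S_n$.

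It then remains to verify the intersection property, for which I would apply Proposition~\ref{max} with $r=4$. The rightmost vertex is moved only by $\rho_3$, so it is fixed by $G_3=\langle\rho_0,\rho_1,\rho_2\rangle$ and hence $\rho_3\notin G_3$; one checks that $G_0$ and $G_3$ are themselves string C-groups by recognising their smaller permutation representation graphs, appealing where possible to the earlier propositions and to Proposition~\ref{sesqui}, and otherwise by a direct induction on $n$ (which runs through the residues $n\equiv 3 \mod 4$). The crux is to show that $G_{0,3}=\langle\rho_1,\rho_2\rangle$ is maximal in $G_0=\langle\rho_1,\rho_2,\rho_3\rangle$; failing a clean maximality statement, one instead compares the orbit partitions of $G_0$ and $G_3$ to prove $G_0\cap G_3=G_{0,3}$ directly and then applies Proposition~\ref{arp}.

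I expect this last step to be the main obstacle. The double edge on labels $0$ and $2$ makes the parabolic-type subgroups $G_0$ and $G_3$ less transparent than in the pure simplex or ladder graphs, so determining their isomorphism types and, above all, computing the intersection $G_0\cap G_3$ exactly — rather than merely bounding it — is the delicate point; by contrast the transitivity, the prime-cycle argument via Theorem~\ref{GJ}, and the parity argument are routine.
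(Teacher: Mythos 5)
Your overall scaffolding (identify the group, then verify the intersection property via Proposition~\ref{arp} or~\ref{max}, reducing the checks to a bounded case) has the right shape, and your parity argument and the observation that $\rho_3\notin G_3$ are correct. But your identification step has a genuine gap. Label the vertices $v_1,\ldots,v_n$ from left to right, so that $\rho_2=(v_3\,v_4)(v_{n-2}\,v_{n-1})$ and $\rho_3=(v_{n-1}\,v_n)$. The orbits of $G_0$ on $\{v_2,\ldots,v_n\}$ are $\{v_2,v_3,v_4,v_5\}$, the $1$-edge pairs $\{v_6,v_7\},\ldots,\{v_{n-5},v_{n-4}\}$, and $\{v_{n-3},v_{n-2},v_{n-1},v_n\}$, and you propose to fuse them by conjugates $\rho_2^{(\rho_0\rho_1)^k}$ fixing $x=v_1$. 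That trick works in Proposition~\ref{Sym2} because there the $2$-edges are far from the left end of the $0$--$1$ path; here the doubled $\{0,2\}$-edge is the third edge from the left, and the trick fails. Concretely, for $n=11$ the element $\rho_0\rho_1$ is the $9$-cycle $(v_1\,v_3\,v_5\,v_7\,v_9\,v_8\,v_6\,v_4\,v_2)$; the support of $\rho_2$ inside this cycle occupies positions $\{1,4,7\}$, all congruent to $1 \bmod 3$, while $v_1,v_7,v_6$ occupy positions $\{0,3,6\}$, all congruent to $0\bmod 3$. Hence a conjugate $\rho_2^{(\rho_0\rho_1)^k}$ meets $\{v_1,v_6,v_7\}$ exactly when $k\equiv 2 \pmod 3$, and then its support is precisely $\{v_1,v_6,v_7,v_{10}\}$ (for instance $\rho_2^{(\rho_0\rho_1)^5}=(v_6\,v_7)(v_1\,v_{10})$). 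So every conjugate that fixes $v_1$ is disjoint from $\{v_6,v_7\}$, and $G_0$ together with all of your conjugates still leaves $\{v_6,v_7\}$ invariant: the transitivity of the stabiliser of $v_1$ is not established, and the whole chain ($2$-transitivity, primitivity, Theorem~\ref{GJ}) collapses.

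The paper's proof avoids this, and it also supplies the step of your plan that has no ready implementation, namely why $G_3$ is a string C-group: its permutation representation graph (a $0$--$1$ path with one doubled $\{0,2\}$-edge and a terminal $2$-edge) is not among those of Propositions~\ref{Sym2(1)}--\ref{Sym3}, nor a sesqui-extension of them, so ``recognising a smaller graph or inducting on $n$'' does not get off the ground. Instead, the paper works with $G_3$ from the start: $G_3$ is an even group, transitive on $\{v_1,\ldots,v_{n-1}\}$, its point stabiliser is transitive on the remaining points, and it contains the $3$-cycle $(\rho_1\rho_2)^4$, so Theorem~\ref{GJ} gives $G_3\cong A_{n-1}$. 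From this, $G\cong S_n$ is immediate ($G$ is transitive, its point stabiliser contains $A_{n-1}$, and $\rho_3$ is odd), and --- the key idea missing from your proposal --- $G_3$ is a string C-group at no cost, because it is a \emph{simple} group generated by three involutions, two of which commute (the criterion of \cite[Theorem 4.1]{CO} invoked elsewhere in the paper). The proof then concludes along the lines of your fallback: $G_0$ is a string C-group and $G_0\cap G_3=G_{0,3}$, both verified once for $n=11$, since increasing $n$ does not change how these subgroups interact; maximality of $G_{0,3}$ in $G_0$ is never needed.
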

\begin{proof}
Let $G$ be the group with the permutation representation graph given in this proposition.
The group $G_3$ is an even transitive group containing a 3-cycle, namely $(\rho_1\rho_2)^4$, and the stabilizer of a point on $G_3$ is transitive on the remaining points. Hence by Theorem~\ref{GJ} $G_3\cong A_{n-1}$.
Consequently $G\cong S_n$. Moreover as $G_3$ is a simple group generated by three independent involution, therefore it is a a string C-group. It is also easy to check that $G_0$ is string C-group and that  $G_3\cap G_0=G_{0,3}$, as it is sufficient to consider the case $n=11$. Hence $G$ is a string C-group isomorphic to $S_n$ as wanted.
\end{proof}

\section{Proof of Theorem~\ref{mainT} }\label{proofMainT}

For the rank 3, we can rely on~\cite{Con80,Con81} which covers all but a small number of small cases that can be easily dealt with  {\sc Magma}~\cite{magma}. Another possibility is to use~\cite{SC94}.
Hence we have to construct examples of rank 4 and above. 
Also, the case where $n=12$ is done in~\cite{flm}.

We divide the rest of the proof as follows. First we deal with the case where $n$ is even.
In Theorem~\ref{evenr}, we construct a family of polytopes of rank $6\leq r \leq \frac{n-2}{2}$.
 The rank 4 and 5 for $n$ even are then dealt with in Theorems~\ref{04} and~\ref{05} when $n\equiv 0 \mod 4$ and in Theorems~\ref{24} and~\ref{25} when $n\equiv 2 \mod 4$ respectively.
Then we deal with the case where $n$ is odd and we need to divide the discussion depending on whether $n\equiv 1 \mod 4$ or $n\equiv 3 \mod 4$.
For $n\equiv 1\mod 4$, we give in Theorem~\ref{1r} a family of polytopes of rank $4\leq r \leq \frac{n-1}{2}$.
For $n\equiv 3\mod 4$, we give in Theorem~\ref{3r} a family of polytopes of rank $7\leq r \leq \frac{n-1}{2}$. Then we give, in Theorems~\ref{34},~\ref{35}, and~\ref{36}, families of polytopes of respective ranks 4, 5 and 6 to finish the proof of Theorem~\ref{mainT}

\subsection{The even case}

\begin{theorem}\label{evenr}
Let $n\geq 14$ be an even integer and $6\leq r \leq \frac{n-2}{2}$.
The group $A_n$ admits a string C-group representation of rank $r$, with Schl\"afli type $\{LCM(4+i, i), 6, 3^{r-6},6,6,3\}$ (with $i=  (n-2)/2-r+1)$ and with the following CPR graph for $n\equiv 2\mod 4$

$$\xymatrix@-0.6pc{*+[o][F]{}  \ar@{-}[r]^0 & *+[o][F]{}  \ar@{-}[r]^1 &*+[o][F]{}  \ar@{-}[r]^0 & *+[o][F]{}  \ar@{-}[r]^1 & *+[o][F]{}  \ar@{-}[r]^0 & *+[o][F]{}  \ar@{-}[r]^1 & *+[o][F]{}  \ar@{.}[r] &*+[o][F]{}  \ar@{-}[r]^0 & *+[o][F]{}  \ar@{-}[r]^1 & *+[o][F]{}  \ar@{-}[r]^2 & *+[o][F]{}  \ar@{-}[r]^3 & *+[o][F]{} \ar@{.}[r] & *+[o][F]{_{}}  \ar@{-}[r]^{r-3}& *+[o][F]{_{}}  \ar@{-}[r]^{r-2}  & *+[o][F]{}  \ar@{-}[r]^{r-1}  & *+[o][F]{}\\
&& & &*+[o][F]{}   \ar@{-}[r]_0&*+[o][F]{}  \ar@{-}[r]_1 & *+[o][F]{}  \ar@{.}[r] &*+[o][F]{} \ar@{-}[r] _0 & *+[o][F]{}  \ar@{-}[r]_1& *+[o][F]{}  \ar@{-}[r]_2 & *+[o][F]{}  \ar@{-}[r]_3 & *+[o][F]{}  \ar@{.}[r] & *+[o][F]{}  \ar@{-}[r]_{r-3} & *+[o][F]{}  \ar@{-}[r]_{r-2} & *+[o][F]{}  \ar@{-}[r]_{r-1}\ar@{-}[u]_{r-3}  & *+[o][F]{}\ar@{-}[u]_{r-3}} $$

and the following CPR-graph for $n\equiv 0\mod 4$.

$$\xymatrix@-0.6pc{*+[o][F]{}  \ar@{-}[r]^1 &*+[o][F]{}  \ar@{-}[r]^0 & *+[o][F]{}  \ar@{-}[r]^1 & *+[o][F]{}  \ar@{-}[r]^0 & *+[o][F]{}  \ar@{-}[r]^1 & *+[o][F]{}  \ar@{.}[r] &*+[o][F]{}  \ar@{-}[r]^0 & *+[o][F]{}  \ar@{-}[r]^1 & *+[o][F]{}  \ar@{-}[r]^2 & *+[o][F]{}  \ar@{-}[r]^3 & *+[o][F]{} \ar@{.}[r] & *+[o][F]{_{}}  \ar@{-}[r]^{r-3}& *+[o][F]{_{}}  \ar@{-}[r]^{r-2}  & *+[o][F]{}  \ar@{-}[r]^{r-1}  & *+[o][F]{}\\
&& & &*+[o][F]{}  \ar@{-}[r]_1 & *+[o][F]{}  \ar@{.}[r] &*+[o][F]{} \ar@{-}[r] _0 & *+[o][F]{}  \ar@{-}[r]_1& *+[o][F]{}  \ar@{-}[r]_2 & *+[o][F]{}  \ar@{-}[r]_3 & *+[o][F]{}  \ar@{.}[r] & *+[o][F]{}  \ar@{-}[r]_{r-3} & *+[o][F]{}  \ar@{-}[r]_{r-2} & *+[o][F]{}  \ar@{-}[r]_{r-1}\ar@{-}[u]_{r-3}  & *+[o][F]{}\ar@{-}[u]_{r-3}} $$
\end{theorem}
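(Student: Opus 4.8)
The plan is to follow the template of Propositions~\ref{Sym2(1)}--\ref{Sym4}: verify that each labelled multigraph is the permutation representation graph of a sggi, identify the resulting permutation group as $A_n$, and then establish the intersection property by an inductive comparison of the orbits of $G_0$ and $G_{r-1}$, invoking Proposition~\ref{arp}.

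First I would confirm that the two pictures define sggis and read off the type. The requirements that each label induce a matching and that two labels $i,j$ with $|i-j|\geq 2$ only create single vertices, single or double edges, or alternating squares are all immediate from the drawings. For the Schl\"afli type, the element $\rho_0\rho_1$ acts on the two $01$-alternating paths at the left as a product of two cycles of lengths $4+i$ and $i$, where $i=(n-2)/2-r+1$, giving order $LCM(4+i,i)$, while the remaining marks $6,3^{r-6},6,6,3$ are read off from the local configurations at the successive junctions of the top and bottom rows, as in Table~\ref{TT}.

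Next I would show that the group $G$ carried by the graph equals $A_n$. Connectedness of the graph gives transitivity. Conjugating an element such as $(\rho_1\rho_2)^2$ by suitable powers of $\rho_0\rho_1$, exactly as in the proof of Proposition~\ref{Sym2(1)}, produces a $3$-cycle, and the same orbit-fusing conjugations show that a point stabiliser is transitive on the remaining points, so $G$ is $2$-transitive and hence primitive. Counting shows every generator is an even permutation, so $G\leq A_n$; since $G$ is primitive and contains a $3$-cycle, which fixes at least three points because $n\geq 14$, Theorem~\ref{GJ} forces $G\geq A_n$, whence $G=A_n$. The cases $n\equiv 0$ and $n\equiv 2\pmod 4$ have to be treated separately at this stage, because the left-hand ends of the two graphs differ; the extra leading $1$-edge present when $n\equiv 0\pmod 4$ is precisely what keeps all generators even.

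Finally comes the intersection property, which I expect to be the main obstacle. By Proposition~\ref{arp} it is enough to prove $G_0\cap G_{r-1}=G_{0,r-1}$ once $G_0$ and $G_{r-1}$ are known to be string C-groups. Deleting the $0$-edges realises $G_0=\langle\rho_1,\ldots,\rho_{r-1}\rangle$ as a group already understood through Proposition~\ref{Sym2(1)} together with a sesqui-extension (Proposition~\ref{sesqui}), the latter accounting for the doubling that yields the even subgroup. Deleting the $(r-1)$-edges realises $G_{r-1}=\langle\rho_0,\ldots,\rho_{r-2}\rangle$ as the rank $r-1$ member of the very same family on two fewer points and with the same value of $i$; by induction on $n$, with the smallest degrees furnished by~\cite{flm} and by the rank four and five constructions of this section, $G_{r-1}$ is then a string C-group for $A_{n-2}$. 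The heart of the argument is to compute the orbits of $G_0$ and of $G_{r-1}$ on $\{1,\ldots,n\}$ and to check that any permutation lying in both subgroups must already lie in $G_{0,r-1}$; the way the two rows are glued at the right by the $(r-3)$-edges, together with the parity-dependent left-hand end, is what forces the cases $n\equiv 0$ and $n\equiv 2\pmod 4$ to be verified by parallel but distinct bookkeeping. Once this equality of subgroups is in hand, Proposition~\ref{arp} gives that $(A_n,\{\rho_0,\ldots,\rho_{r-1}\})$ is a string C-group of the stated rank and Schl\"afli type.
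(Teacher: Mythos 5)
Your overall skeleton (check the sggi conditions, identify the group, prove $G_0$ and $G_{r-1}$ are string C-groups, compare orbits, apply Proposition~\ref{arp}) coincides with the paper's, but both of your key subgroup identifications are wrong, and the inductive scaffolding you build on them would fail. Deleting the $(r-1)$-edges does \emph{not} produce the rank $r-1$ member of the same family on two fewer points. What it actually produces is the graph of Proposition~\ref{Sym2} of rank $r-1$ on $n-2$ vertices (whose terminal vertical edge then has label $(r-1)-2=r-3$), together with one isolated $(r-3)$-edge on the two remaining vertices; in other words, $G_{r-1}$ is a sesqui-extension of the group of Proposition~\ref{Sym2}. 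Consequently $G_{r-1}\cong S_{n-2}$, not $A_{n-2}$: it acts on $n-2$ points as the full symmetric group, odd permutations being compensated by the transposition on the two extra points. So your induction on $n$ ``within the family'' rests on a false inductive hypothesis, and it could not even start, since the rank $4$ and $5$ constructions of this section have completely different graph shapes and are not members of this family (which only exists for $r\geq 6$). Moreover, the identification $G_{r-1}\cong S_{n-2}$ is not a technicality: it is exactly what the paper uses to identify $G$ itself, because the even embedding of $S_{n-2}$ is a maximal subgroup of $A_n$, $G$ is even, and $\rho_{r-1}\notin G_{r-1}$, which gives $G\cong A_n$ immediately, without the two-transitivity/Jones argument you propose (that route, modelled on Proposition~\ref{Sym2}, might be salvageable, but it is not what carries the paper's proof).

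Your treatment of $G_0$ is also incorrect: Proposition~\ref{Sym2(1)} concerns graphs glued along $0$-edges, whereas in the graph of $G_0$ all $0$-edges have been deleted. What remains is a collection of isolated $1$-edges together with two chains labelled $1,2,\ldots,r-1$ joined at their right ends by two vertical $(r-3)$-edges. This is a small group: the paper observes $G_0\leq 2\times(2^{r}:S_r)$ and, using the fact that $G_0$ is even together with Proposition~\ref{sesqui}, concludes $G_0\cong 2^{r}:S_r$ — nothing like the product of two large symmetric groups $(S_{(n-4)/2}\times S_{(n+4)/2})^+$ that Proposition~\ref{Sym2(1)} yields. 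Since the final step (showing $G_0\cap G_{r-1}=G_{0,r-1}$ by comparing orbits) depends entirely on having the correct structures and orbits of $G_0$ and $G_{r-1}$, these two misidentifications are fatal to the proof as proposed.
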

\begin{proof}
We consider the following CPR graph which was given as the example of highest possible rank in the case where $n\geq14$ is even and $r=\frac{n-2}{2}\geq 6$  in~\cite{flm2}.
$$\xymatrix@-0.5pc{*+[o][F]{}  \ar@{-}[r]^0 & *+[o][F]{}  \ar@{-}[r]^1 &*+[o][F]{}  \ar@{-}[r]^0 & *+[o][F]{}  \ar@{-}[r]^1 & *+[o][F]{}  \ar@{-}[r]^2 & *+[o][F]{}  \ar@{-}[r]^3 & *+[o][F]{} \ar@{.}[r] & *+[o][F]{_{}}  \ar@{-}[r]^{r-3}& *+[o][F]{_{}}  \ar@{-}[r]^{r-2}  & *+[o][F]{}  \ar@{-}[r]^{r-1}  & *+[o][F]{}\\
& & & & *+[o][F]{}  \ar@{-}[r]_2 & *+[o][F]{}  \ar@{-}[r]_3 & *+[o][F]{}  \ar@{.}[r] & *+[o][F]{}  \ar@{-}[r]_{r-3} & *+[o][F]{}  \ar@{-}[r]_{r-2} & *+[o][F]{}  \ar@{-}[r]_{r-1}\ar@{-}[u]_{r-3}  & *+[o][F]{}\ar@{-}[u]_{r-3}} $$

It gives a string C-group representation of type $\{5,6,3^{r-6},6,6,3\}$. From this graph we construct a family of graphs with $n$ vertices and $r\in\{6,\ldots, \frac{n-2}{2}\}$ adding on the top and on the bottom of the graph above, two sequences of edges, of the same size, with alternate labels $0$ and $1$.
So we have the following two possibilities.

$$\xymatrix@-0.6pc{*+[o][F]{}  \ar@{-}[r]^0 & *+[o][F]{}  \ar@{-}[r]^1 &*+[o][F]{}  \ar@{-}[r]^0 & *+[o][F]{}  \ar@{-}[r]^1 & *+[o][F]{}  \ar@{-}[r]^0 & *+[o][F]{}  \ar@{-}[r]^1 & *+[o][F]{}  \ar@{.}[r] &*+[o][F]{}  \ar@{-}[r]^0 & *+[o][F]{}  \ar@{-}[r]^1 & *+[o][F]{}  \ar@{-}[r]^2 & *+[o][F]{}  \ar@{-}[r]^3 & *+[o][F]{} \ar@{.}[r] & *+[o][F]{_{}}  \ar@{-}[r]^{r-3}& *+[o][F]{_{}}  \ar@{-}[r]^{r-2}  & *+[o][F]{}  \ar@{-}[r]^{r-1}  & *+[o][F]{}\\
&& & &*+[o][F]{}   \ar@{-}[r]_0&*+[o][F]{}  \ar@{-}[r]_1 & *+[o][F]{}  \ar@{.}[r] &*+[o][F]{} \ar@{-}[r] _0 & *+[o][F]{}  \ar@{-}[r]_1& *+[o][F]{}  \ar@{-}[r]_2 & *+[o][F]{}  \ar@{-}[r]_3 & *+[o][F]{}  \ar@{.}[r] & *+[o][F]{}  \ar@{-}[r]_{r-3} & *+[o][F]{}  \ar@{-}[r]_{r-2} & *+[o][F]{}  \ar@{-}[r]_{r-1}\ar@{-}[u]_{r-3}  & *+[o][F]{}\ar@{-}[u]_{r-3}} $$

$$\xymatrix@-0.6pc{*+[o][F]{}  \ar@{-}[r]^1 &*+[o][F]{}  \ar@{-}[r]^0 & *+[o][F]{}  \ar@{-}[r]^1 & *+[o][F]{}  \ar@{-}[r]^0 & *+[o][F]{}  \ar@{-}[r]^1 & *+[o][F]{}  \ar@{.}[r] &*+[o][F]{}  \ar@{-}[r]^0 & *+[o][F]{}  \ar@{-}[r]^1 & *+[o][F]{}  \ar@{-}[r]^2 & *+[o][F]{}  \ar@{-}[r]^3 & *+[o][F]{} \ar@{.}[r] & *+[o][F]{_{}}  \ar@{-}[r]^{r-3}& *+[o][F]{_{}}  \ar@{-}[r]^{r-2}  & *+[o][F]{}  \ar@{-}[r]^{r-1}  & *+[o][F]{}\\
&& & &*+[o][F]{}  \ar@{-}[r]_1 & *+[o][F]{}  \ar@{.}[r] &*+[o][F]{} \ar@{-}[r] _0 & *+[o][F]{}  \ar@{-}[r]_1& *+[o][F]{}  \ar@{-}[r]_2 & *+[o][F]{}  \ar@{-}[r]_3 & *+[o][F]{}  \ar@{.}[r] & *+[o][F]{}  \ar@{-}[r]_{r-3} & *+[o][F]{}  \ar@{-}[r]_{r-2} & *+[o][F]{}  \ar@{-}[r]_{r-1}\ar@{-}[u]_{r-3}  & *+[o][F]{}\ar@{-}[u]_{r-3}} $$

Let $G$ be a group having one of the permutation representations above.
The group $G_0$ is a subgroup of $2\times(2^{r}:S_r )$, but as $G_0$ is an even group, by Proposition~\ref{sesqui}, $ G_0\cong 2^{r}:S_r$. Moreover $G_0$ is a string C-group. By Propositions~\ref{sesqui} and \ref{Sym2} the group $G_{r-1}$ is a string C-group isomorphic to $S_{n-2}$. As $G_{r-1}$ is a maximal subgroup of $A_n$ and $\rho_{r-1}\not\in G_{r-1}$, it follows from Proposition~\ref{arp} that $G\cong A_n$.
Finally, $G_{0,r-1} = G_0 \cap G_{r-1}$ as the orbits of $G_0\cap G_{r-1}$ have to be suborbits of $G_0$ and of $G_{r-1}$ and $G_{0,r-1}$ is the biggest subgroup we can obtain that would have such orbits. Hence $G$ is isomorphic to $A_n$ and the permutation representation graph above is a CPR-graph.

Let $i=  (n-2)/2-r+1$.  Then it is easy to see from the CPR-graph that the Schl\"afli type of the string C-group of rank $r$ for $A_n$ obtained by this construction is
$\{LCM(4+i, i), 6, 3^{r-6},6,6,3\}$.
The first entry of the symbol comes from the fact that there are 0-1-components on the upper side of the graph and on the lower side of the graph and the upper one has 4 more vertices than the lower one.
\end{proof}
There remains to construct examples in rank 4 and 5 for $n$ even. We split the discussion in two cases, namely the case where $n\equiv 0 \mod 4$ and the case where $n\equiv 2 \mod 4$.
%
%
\begin{theorem}\label{24}
Let $n\equiv 2 \mod 4$ with $n\geq 10$.	
The group $A_n$ admits a string C-group representation of rank $4$, with Schl\"afli type $\{5, 6, n-4\}$, with the following CPR-graph.
\begin{center}
\begin{tabular}{cc}
$(F_1)$
&$ \xymatrix@-1.7pc{*+[o][F]{} \ar@<.3ex>@{-}[rr]^2 \ar@<-.3ex>@{-}[rr]_0 && *+[o][F]{}  \ar@{-}[rr]^1 && *+[o][F]{}  \ar@{-}[rr]^0 && *+[o][F]{} \ar@{-}[rr]^{1} && *+[o][F]{}  \ar@{-}[rr]^{2} && *+[o][F]{} \ar@{-}[rr]^3 && *+[o][F]{} \ar@{-}[rr]^{2} && *+[o][F]{} \ar@{-}[rr]^{3} && *+[o][F]{} \ar@{-}[rr]^2 && *+[o][F]{} \ar@{.}[rr] && *+[o][F]{} \ar@{-}[rr]^{3} && *+[o][F]{}  \ar@{-}[rr]^{2} && *+[o][F]{} \ar@{-}[rr]^{3} &&*+[o][F]{} \ar@{-}[rr]^{2} &&*+[o][F]{}  }$\\
\end{tabular}
\end{center}
\end{theorem}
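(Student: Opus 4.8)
The plan is to read the permutation generators off the CPR graph $(F_1)$ and then to analyse the two ``facet'' subgroups $G_0=\langle\rho_1,\rho_2,\rho_3\rangle$ and $G_3=\langle\rho_0,\rho_1,\rho_2\rangle$. Numbering the vertices $1,\ldots,n$ from left to right, the double edge together with the rest of the graph gives
\[
\rho_0=(1,2)(3,4),\ \ \rho_1=(2,3)(4,5),\ \ \rho_2=(1,2)(5,6)(7,8)\cdots(n-1,n),\ \ \rho_3=(6,7)(8,9)\cdots(n-2,n-1).
\]
First I would verify that $(G,S)$ is an sggi: the only non-consecutive pairs are $(\rho_0,\rho_2),(\rho_0,\rho_3),(\rho_1,\rho_3)$, and each commutes (for $\rho_0,\rho_2$ because they share only the transposition $(1,2)$ and are otherwise disjointly supported). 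A short computation gives $\rho_0\rho_1=(1,3,5,4,2)$, so $|\rho_0\rho_1|=5$; $\rho_1\rho_2$ is a product of two $3$-cycles on $\{1,\ldots,6\}$ together with $(7,8)(9,10)\cdots$, so $|\rho_1\rho_2|=6$; and $\rho_2\rho_3$ is $(1,2)$ times an $(n-4)$-cycle on $\{5,\ldots,n\}$, so $|\rho_2\rho_3|=n-4$. This is the type $\{5,6,n-4\}$. Since $n\equiv2\bmod4$, each $\rho_i$ is a product of an even number of transpositions, so $G\leq A_n$.

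Next I would identify $G_0$. Deleting the $0$-edges splits the graph into the path $1\!-\!2\!-\!3$ and the path $4\!-\!5\!-\!\cdots\!-\!n$, so $G_0$ has orbits $\{1,2,3\}$ and $\{4,\ldots,n\}$; it induces $S_3$ on the first. On the second, $\rho_1$ restricts to the transposition $(4,5)$ while $\rho_2\rho_3$ restricts to an $(n-4)$-cycle, and conjugating $(4,5)$ by its powers produces every transposition $(4,j)$, so the induced group is $S_{n-3}$. A parity count (again using $n\equiv2\bmod4$) shows the two projections are coupled exactly by the sign, whence $G_0\cong(S_3\times S_{n-3})^+$, the stabiliser in $A_n$ of the $3$-subset $\{1,2,3\}$. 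For $n\geq10$ this is a maximal (intransitive) subgroup of $A_n$. As $G$ is transitive, $\rho_0\notin G_0$ gives $G_0\subsetneq G\leq A_n$, and maximality forces $G\cong A_n$.

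It then remains to establish the intersection property, and the delicate point — the step I expect to be the main obstacle — is the identification of $G_3$. Its orbits are $\{1,\ldots,6\}$ together with the pairs $\{7,8\},\{9,10\},\ldots$. On $\{1,\ldots,6\}$ the group is transitive with point stabiliser $\langle\rho_0,\rho_1\rangle$, and here lies the trap: $\langle\rho_0,\rho_1\rangle$ is the \emph{dihedral} group $D_{10}$ (two involutions whose product is a $5$-cycle), \emph{not} $A_5$, so the six-point action is the transitive $\mathrm{PSL}(2,5)\cong A_5$ of order $60$, and not $A_6$. Writing $\tau=(7,8)(9,10)\cdots$ for the action of $\rho_2$ on $\{7,\ldots,n\}$, the group $G_3$ is the sesqui-extension of this $A_5$ with respect to $\tau$; since $\rho_1\rho_2$ has order $6$ while $A_5$ has no element of order $6$, Proposition~\ref{sesqui} yields $G_3\cong A_5\times C_2$. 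Both $G_0$ and $G_3$ are then checked to be string C-groups by the (short) rank-$3$ intersection condition in each case.

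Finally I would close the argument with Proposition~\ref{arp}: one computes $G_0\cap G_3$ by intersecting the orbit data, and the constraint ``$\phi\in\mathrm{PSL}(2,5)$ stabilises $\{1,2,3\}$'' cuts the six-point part down to the maximal $S_3$ of $A_5$, giving $G_0\cap G_3=G_{0,3}=\langle\rho_1,\rho_2\rangle\cong D_{12}\cong S_3\times C_2$ of order $12$; hence $(G,S)$ is a string C-group representation of $A_n$ of rank $4$ and type $\{5,6,n-4\}$. Equivalently one may note $\rho_0\notin G_0$ and that $G_{0,3}\cong S_3\times C_2$ is maximal in $G_3\cong A_5\times C_2$ (precisely because $S_3$ is maximal in $A_5$) and apply the mirror image of Proposition~\ref{max}. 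In either route the whole proof hinges on recognising the six-point action as $\mathrm{PSL}(2,5)$: mistaking it for $A_6$ would force $G_0\cap G_3$ up to order $36$ and make the intersection property appear to fail, whereas the correct $A_5\times C_2$ makes it collapse to $G_{0,3}$.
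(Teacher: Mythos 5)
Your proposal is correct, and its overall skeleton (analyse the two rank-$3$ subgroups $G_3=\langle\rho_0,\rho_1,\rho_2\rangle$ and $G_0=\langle\rho_1,\rho_2,\rho_3\rangle$, then glue via Propositions~\ref{arp} and~\ref{max}) is the paper's; in particular your closing alternative --- $G_{0,3}\cong D_{12}$ is maximal in $G_3\cong C_2\times A_5$ because $S_3$ is maximal in $A_5$, then apply the mirror image of Proposition~\ref{max} --- is exactly the published argument. Two of your steps, however, genuinely differ. First, the paper proves $G\cong A_n$ by exhibiting the $5$-cycle $\rho_0\rho_1$, asserting primitivity of $G$ (``$\rho_0$ cannot preserve any block system''), and invoking Theorem~\ref{GJ}; you instead identify $G_0\cong(S_3\times S_{n-3})^+$ as the stabiliser of a $3$-set and quote maximality of intransitive subgroups of $A_n$, which sidesteps the primitivity claim altogether. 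Second, where the paper establishes the key intersection $G_0\cap G_3=G_{0,3}$ by reducing to $n=10$ and checking with {\sc Magma}, you derive it structurally: any element of $G_0\cap G_3$ stabilises $\{1,2,3\}$, and the $3$-set stabiliser in the transitive $\mathrm{PSL}(2,5)$ on six points is precisely the $S_3$ generated by the restrictions of $\rho_1,\rho_2$, so the intersection collapses to $\langle\rho_1,\rho_2\rangle$. Your route is uniform in $n$ and computer-free; the paper's is shorter and leans on standard machinery plus a finite verification.

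Two caveats, neither fatal. (i) Your identification of the six-point constituent as $\mathrm{PSL}(2,5)$ rather than $A_6$ is circular as written: you \emph{assert} that the point stabiliser equals $\langle\rho_0,\rho_1\rangle\cong D_{10}$, which already presupposes the order is $60$. A clean fix: the restriction of $\langle\rho_0,\rho_1,\rho_2\rangle$ to $\{1,\ldots,6\}$ is a quotient of the string Coxeter group $[5,3]\cong A_5\times C_2$ of order $120$ and lies in $A_6$; since $A_6$ has no subgroup of index $3$, the transitive constituent must be $A_5$. (ii) You still owe the rank-$3$ intersection check for $G_0$, namely $\langle\rho_1,\rho_2\rangle\cap\langle\rho_2,\rho_3\rangle=\langle\rho_2\rangle$; this is not forced by orders (both groups have order $12$ when $n=10$) and needs either the paper's {\sc Magma} check or a short dihedral argument (the extra candidates $\tau=(7,8)\cdots(n-1,n)$ and $\rho_2\tau$ act on the $(n-4)$-cycle fixing the two \emph{adjacent} points $5,6$, which no nontrivial element of $\langle\rho_2,\rho_3\rangle$ does). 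Both points are at the level of detail the paper itself delegates to ``obviously'' or to {\sc Magma}, so your proof stands once they are filled in.
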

\begin{proof}
Let $G$ be a group having one of the permutation representation graphs above.
In this case, $G_3 \cong C_2\times A_5$ is obviously a string C-group of rank 3.
Moreover, $G_{0,3}\cong C_2\times D_{6} \cong D_{12}$ and therefore $G_{0,3}$ is maximal in $G_3$.
So, by Proposition~\ref{max}, 
it remains to prove that $G_0$ is also a string C-group.
Now, $G_{0,3}$ and $G_{0,1}$ are obviously string C-groups as they are dihedral groups.
The group $G_{0,1,3} \cong C_2$ and the subgroups $G_{0,3}$ and $G_{0,1}$ will have the same intersection no matter what the value of $n$ is. We can thus assume $n=10$ and check by hand or using {\sc Magma} that $G_0 \cap G_3 = G_{0,3}$. Hence $G_0$ is a string C-group and all string C-group representations with permutation representation graph $(F_1)$ are string C-groups. It remains to show that the four generators generate $A_n$. The element $\rho_0\rho_1$ is a 5-cycle and $G$ is primitive, as for instance $\rho_0$ cannot preserve any block system. Hence, by Theorem~\ref{GJ}, $G\cong A_n$.

The Schl\"afli type is obvious from the permutation representation graph.
\end{proof}
\begin{theorem}\label{25}
Let $n\equiv 2 \mod 4$ with $n\geq 10$.	
The group $A_n$ admits a string C-group representation of rank $5$, with Schl\"afli type $\{5, 5, 6, n-5\}$, with the following CPR-graph.
\begin{center}
\begin{tabular}{cc}

$(F_2)$&$ \xymatrix@-1.7pc{*+[o][F]{}  \ar@{-}[rr]^0 && *+[o][F]{}  \ar@{-}[rr]^1 &&*+[o][F]{} \ar@<.3ex>@{-}[rr]^2 \ar@<-.3ex>@{-}[rr]_0 && *+[o][F]{}  \ar@{-}[rr]^1 && *+[o][F]{}  \ar@{-}[rr]^2 && *+[o][F]{} \ar@{-}[rr]^{3} && *+[o][F]{}  \ar@{-}[rr]^{4} && *+[o][F]{} \ar@{-}[rr]^{3} && *+[o][F]{}  \ar@{-}[rr]^{4}   && *+[o][F]{} \ar@{.}[rr]  && *+[o][F]{} \ar@{-}[rr]^{3} &&*+[o][F]{} \ar@{-}[rr]^{4} &&*+[o][F]{}\ar@{-}[rr]^{3} &&*+[o][F]{} \ar@{-}[rr]^{4} &&*+[o][F]{}  }$\\

\end{tabular}
\end{center}
\end{theorem}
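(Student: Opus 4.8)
The plan is to run the argument of Theorem~\ref{24} one rank higher, checking the intersection property through Proposition~\ref{arp}. Label the vertices of $(F_2)$ from left to right by $v_1,\dots,v_n$, so that $\rho_0=(v_1,v_2)(v_3,v_4)$, $\rho_1=(v_2,v_3)(v_4,v_5)$, $\rho_2=(v_3,v_4)(v_5,v_6)$, and $\rho_3,\rho_4$ are the two alternating matchings of the tail on $v_6,v_7,\dots,v_n$. Two global facts come first. The graph is connected, so $G$ is transitive. And since $n\equiv 2\bmod 4$, every generator is a product of an even number of transpositions: the two tail matchings each have $(n-6)/2$ edges, which is even precisely because of the congruence, so $G\leq A_n$.

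Next I would identify the two relevant subgroups. Deleting $\rho_4$ disconnects the tail: the vertices $v_1,\dots,v_7$ form one component while $(v_8,v_9),(v_{10},v_{11}),\dots$ become isolated $\rho_3$-edges whose product $\tau$ commutes with the action on $v_1,\dots,v_7$. Thus $G_4=\langle\rho_0,\rho_1,\rho_2,\rho_3\rangle$ is the sesqui-extension, with respect to $\rho_3$ and $\tau$, of the fixed group $H=\langle(v_1,v_2)(v_3,v_4),(v_2,v_3)(v_4,v_5),(v_3,v_4)(v_5,v_6),(v_6,v_7)\rangle$ on the seven points $v_1,\dots,v_7$. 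As $H$ is transitive on a prime number of points and contains a transposition, $H\cong S_7$, and one checks it is a string C-group. Since $\tau$ is odd (it has $(n-6)/2-1$ transpositions) while $G_4\leq A_n$, we have $\tau\notin G_4$, so Proposition~\ref{sesqui} gives $G_4\cong H\cong S_7$, a string C-group independent of $n$. The same sesqui-extension reduction applied to $G_{0,4}=\langle\rho_1,\rho_2,\rho_3\rangle$ yields a fixed group on the six points $v_2,\dots,v_7$, which one checks to be $S_6$; since $G_{0,4}$ fixes $v_1$ and has the right order, it is exactly the stabiliser of $v_1$ in $G_4\cong S_7$.

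This makes the top-level intersection immediate: as $G_0=\langle\rho_1,\rho_2,\rho_3,\rho_4\rangle$ fixes $v_1$, we get $G_{0,4}\leq G_0\cap G_4\leq\mathrm{Stab}_{G_4}(v_1)=G_{0,4}$, forcing $G_0\cap G_4=G_{0,4}$. By Proposition~\ref{arp} it then remains only to prove that $G_0$ is a string C-group; after relabelling, $G_0$ fixes $v_1$ and is a rank-four group on $n-1$ points whose last parabolic is the fixed group $G_{0,4}\cong S_6$. I would establish this by applying Proposition~\ref{arp} to $G_0$ in turn, with the required equality of subgroups obtained from the orbit-intersection argument used repeatedly in this paper (as in the proofs of Theorem~\ref{evenr} and Proposition~\ref{Sym2}): the orbits of the two parabolics of $G_0$ force their intersection to be no larger than the common subgroup, the rank-three parabolics being handled by the known rank-three criteria, and the surviving finite check being $n$-independent so that it can be carried out at $n=10$. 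To identify $G$ itself, note that $\rho_0\rho_1=(v_1,v_3,v_5,v_4,v_2)$ is a $5$-cycle fixing the $n-5\geq 3$ points $v_6,\dots,v_n$; moreover $G_0$ is transitive on $\{v_2,\dots,v_n\}$ (its permutation graph is connected there), so the stabiliser of $v_1$ is transitive on the remaining points and $G$ is $2$-transitive, hence primitive. Theorem~\ref{GJ} then gives $G\geq A_n$, and with $G\leq A_n$ from the parity remark we conclude $G\cong A_n$; the Schl\"afli type $\{5,5,6,n-5\}$ is read directly off the graph.

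The one genuine obstacle is the reduction step, namely showing that $G_0$ is itself a string C-group. Unlike $G_4$, the group $G_0$ grows with $n$, so no single fixed computation settles it: the clean stabiliser argument that worked for $G_0\cap G_4$ does not recurse cleanly, because the relevant parabolics of $G_0$ are intransitive on the overlapping points. The crux is therefore to justify, through the orbit structure of these parabolics, that the intersection determining the intersection property of $G_0$ is independent of $n$, so that one verification at small degree suffices. Granting this, the structure of $G_4$, the equality $G_0\cap G_4=G_{0,4}$, and the primitivity argument are all routine.
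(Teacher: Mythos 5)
Your proposal is correct, and its skeleton is the paper's own: realise $G_4$ (and $G_{0,4}$) as sesqui-extensions, with respect to the odd fixed part $\tau$ of $\rho_3$, of $S_7$ on $\{v_1,\dots,v_7\}$ (and $S_6$ on $\{v_2,\dots,v_7\}$), apply Proposition~\ref{arp}, and reduce the intersection property of $G_0$ to an $n$-independent verification at $n=10$. In particular, the step you single out as the crux is treated in the paper at exactly the level you propose: it records $G_{0,1}\cong S_{n-4}$, $G_{0,4}\cong S_6$, $G_{0,1,4}\cong D_{12}$, asserts that increasing $n$ does not change the intersection $G_{0,1}\cap G_{0,4}$, and verifies $G_{0,1}\cap G_{0,4}=G_{0,1,4}$ with {\sc Magma} for $n=10$; so your conditional clause is no weaker than the published argument. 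Where you genuinely differ is in the remaining identifications, and your variants are arguably cleaner. The paper first gets $G_0\cong A_{n-1}$ from Proposition~\ref{0101tail} and then extracts both $G\cong A_n$ and the top-level equality $G_0\cap G_4=G_{0,4}$ from the resulting fact that $G_0$ is the full stabiliser of $v_1$ in $G$; you instead obtain $G_0\cap G_4=G_{0,4}$ from the sandwich $G_{0,4}\leq G_0\cap G_4\leq\mathrm{Stab}_{G_4}(v_1)=G_{0,4}$, which needs only that $G_0$ fixes $v_1$ plus an order count in $G_4\cong S_7$, and you identify $G\cong A_n$ from the $5$-cycle $\rho_0\rho_1$ fixing $n-5\geq 3$ points, $2$-transitivity, Theorem~\ref{GJ}, and the evenness of the generators. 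Both routes are sound; yours trades the citation of Proposition~\ref{0101tail} for a short direct argument and never needs to identify $G_0$, whereas the paper's route yields the extra structural fact $G_0\cong A_{n-1}$ along the way.
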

\begin{proof}
Let $G$ be a group having one of the permutation representation graphs above.
In this case, $G_4$ is  a sesqui-extension of one of the string C-groups representations for $(S_7\times 2)^+$ given in  Table~2 of~\cite{flm2}. Hence it is a string C-group.
By Propositon~\ref{0101tail} the group $G_0\cong A_{n-1}$.
The group $G_4\cong S_7$. Finally the group $G_{0,4}\cong S_6$.
One can easily check using {\sc Magma} or the atlas of regular polytopes~\cite{atlasl} that $G_4$ and $G_{0,4}$ are string C-group representations. So it remains to prove that $G_0$ is a string C-group to finish proving that all permutation representation graphs of $(F_2)$ give string C-groups.
The subgroup $G_{0,4}\cong S_6$ as stated above, $G_{0,1,4}\cong  D_{12}$ and $G_{0,1}\cong S_{n-4}$. 
Increasing $n$ will not change the intersection between $G_{0,1}$ and $G_{0,4}$. Hence we can check with {\sc Magma} that $G_{0,1} \cap G_{0,4} = G_{0,1,4}$ for $n=10$. Thus $G_{0,1}$ is a string C-group representation and so is $G_0$ and so is $G$ as $G_0\cong A_{n-1}$ and $G$ is transitive. Moreover $G\cong A_n$ since it is transitive on $n$ points and the stabilizer of a point in $G$ contains $G_0 \cong A_{n-1}$.

The Schl\"afli type is obvious from the permutation representation graph.
\end{proof}

\begin{theorem}\label{04}
Let $n\equiv 0 \mod 4$ with $n\geq 16$.	
The group $A_n$ admits a string C-group representation of rank $4$, with Schl\"afli type $\{3,12,LCM(n-8,6)\}$, with the following CPR-graph.
\begin{center}
\begin{tabular}{cc}
$(F_3)$
&$\xymatrix@-1.7pc{
&& && &&*+[o][F]{g}  \ar@{-}[rr]^3 &&  *+[o][F]{a}  \ar@{-}[rr]^2&& *+[o][F]{d}\\
 && && && && && & && && &&\\
&& && &&*+[o][F]{h}  \ar@{-}[rr]_3\ar@{-}[uu]^{0} &&  *+[o][F]{c} \ar@{-}[rr]_2\ar@{-}[uu]^{0} && *+[o][F]{e}  \ar@<-.5ex>@{-}[uu]_{0, 1, 3} \ar@<.5ex>@{-}[uu]^{}\ar@{-}[uu]^{}\\
 && && && && && & && && &&\\
*+[o][F]{l}  \ar@{-}[rr]_0  && *+[o][F]{n} \ar@{-}[rr]_1 && *+[o][F]{m}  \ar@{-}[rr]_{2} &&*+[o][F]{i}  \ar@{-}[rr]_{3}\ar@{-}[uu]^{1} &&*+[o][F]{b}  \ar@{-}[rr]_{2}\ar@{-}[uu]^{1} && *+[o][F]{j}  \ar@{-}[rr]_{3}  && *+[o][F]{k}  \ar@{-}[rr]_{2} && *+[o][F]{f}  \ar@{.}[rr]  && *+[o][F]{}  \ar@{-}[rr]_{3}  && *+[o][F]{}  \ar@{-}[rr]_{2} && *+[o][F]{}
} $\\
\end{tabular}
\end{center}
\end{theorem}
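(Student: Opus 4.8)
The plan is to follow the three-step template of Theorems~\ref{24} and~\ref{25}. Reading the four generators off the CPR-graph $(F_3)$, the vertices $d$ and $e$ form a coincidence gadget on which $\rho_0,\rho_1,\rho_3$ all act as $(d,e)$ while $\rho_2$ does not; combined with the $0$-$1$ rungs on the left and the alternating $2$-$3$ tail on the right, this makes $\rho_0\rho_1$ a product of three disjoint $3$-cycles, $\rho_1\rho_2$ an element of order $12$, and $\rho_2\rho_3$ the product of a $6$-cycle on the six vertices of the top two rows (fixing $l$ and $n$) with an $(n-8)$-cycle running along the tail. This immediately gives the Schl\"afli type $\{3,12,LCM(n-8,6)\}$. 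It then remains to identify $G$ as $A_n$ and to prove that $(G,\{\rho_0,\rho_1,\rho_2,\rho_3\})$ is a string C-group.

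For the isomorphism type, the graph is connected so $G$ is transitive. I would obtain primitivity as in the proof of Proposition~\ref{Sym2}: choosing a base point and conjugating $\rho_2$ by powers of $\rho_0\rho_1$ produces elements fixing the base point that fuse the orbits of its stabiliser, yielding $2$-transitivity. Since $\rho_0\rho_1$ and $\rho_2\rho_3$ are products of several cycles rather than single cycles, I cannot simply quote a prime cycle as in Theorem~\ref{24}; instead I would extract a single $3$-cycle---a cycle of prime length fixing $n-3\geq 13$ points---by combining conjugates of $\rho_0\rho_1$, exactly as $(4,6,5)$ is produced in the proof of Proposition~\ref{Sym2(1)}. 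Theorem~\ref{GJ} then gives $G\geq A_n$, and because every generator is a product of an even number of transpositions, $G\leq A_n$; hence $G\cong A_n$. (Alternatively, one can show that $G_0=\langle\rho_1,\rho_2,\rho_3\rangle$ fixes $l$ and acts as $A_{n-1}$ on the remaining points, so that transitivity of $G$ forces $G\cong A_n$.)

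For the string C-group property I would use Proposition~\ref{arp}, verifying that the outer subgroups $G_0$ and $G_3=\langle\rho_0,\rho_1,\rho_2\rangle$ are string C-groups and that $G_0\cap G_3=G_{0,3}$. The subgroup $G_{0,3}=\langle\rho_1,\rho_2\rangle$ is dihedral of order $24$. The subgroup $G_3$ has a bounded main orbit together with two-point orbits along the tail, so up to a sesqui-extension (Proposition~\ref{sesqui}) it is a fixed string C-group that does not depend on $n$; the subgroup $G_0$ is a string C-group by the inductive description of the $2$-$3$ tail already used for the symmetric groups, again via Propositions~\ref{sesqui} and~\ref{Sym2}. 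The decisive point, exactly as in Theorems~\ref{24} and~\ref{25}, is that the long tail plays no role in the intersection: the orbits of $G_0\cap G_3$ must simultaneously refine those of $G_0$ and of $G_3$, and $G_{0,3}$ is already the largest subgroup with those common orbits, so the intersection is the same for every admissible $n$. It therefore suffices to confirm $G_0\cap G_3=G_{0,3}$ for the smallest case $n=16$, a finite check that can be carried out in {\sc Magma}, after which Proposition~\ref{arp} completes the proof.

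I expect the main obstacle to be the identification of $G$ as $A_n$. In Theorem~\ref{24} a single $5$-cycle made both primitivity and the hypothesis of Theorem~\ref{GJ} transparent, whereas here the natural elements split into several cycles, so primitivity must be argued through $2$-transitivity and a genuine single $3$-cycle must be manufactured; doing this carefully around the coincidence gadget at $d,e$ is the delicate part, the intersection condition being routine once its independence from $n$ is observed.
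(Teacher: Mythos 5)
Your computation of the Schl\"afli type is correct (and more explicit than the paper's), and your overall architecture --- Proposition~\ref{arp} with outer subgroups $G_0$ and $G_3$, a sesqui-extension reduction of $G_3$ to a fixed group independent of $n$, a finite {\sc Magma} check for the intersection, and Theorem~\ref{GJ} for the identification of the group --- matches the paper's. But there is a genuine gap at the single most important step: your justification that $G_0=\langle\rho_1,\rho_2,\rho_3\rangle$ is a string C-group. You invoke ``the inductive description of the $2$-$3$ tail \dots via Propositions~\ref{sesqui} and~\ref{Sym2}'', but Proposition~\ref{Sym2} describes CPR graphs of \emph{symmetric} groups of a completely different shape (two merging $0$-$1$ rows followed by a tail with increasing labels $2,3,\ldots,r-1$), whereas the graph of $G_0$ here has labels $1,2,3$, contains the double edge $\{1,3\}$ at $d,e$, and generates $A_{n-1}$ --- a simple group, which is neither a symmetric group nor a sesqui-extension of one (a sesqui-extension of $\Phi$ is isomorphic to $\Phi$ or to $\Phi\times 2$, and $A_{n-1}$ is neither for any $S_m$). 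So the cited proposition does not apply; and since $G_0$ is exactly the subgroup whose order grows with $n$, its intersection property is the one thing that cannot be delegated to a finite computation. The paper closes this step with an idea your proposal lacks: it first proves $G_0\cong A_{n-1}$ (it is $2$-transitive on the $n-1$ points other than $l$, and $(\rho_1\rho_2\rho_3\rho_2)^{12}$ is a $5$-cycle fixing more than three points, so Theorem~\ref{GJ} applies), and then invokes \cite[Theorem 4.1]{CO}: a \emph{simple} group generated by three involutions, two of which commute, is automatically a string C-group. Your parenthetical remark shows you suspected $G_0\cong A_{n-1}$, but without the Conder--Oliveros theorem (or an explicit proof, valid for all $n$, that $\langle\rho_1,\rho_2\rangle\cap\langle\rho_2,\rho_3\rangle=\langle\rho_2\rangle$) the intersection property for $G_0$ remains unproved.

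A secondary weakness is your plan to manufacture a single $3$-cycle ``exactly as'' in Proposition~\ref{Sym2(1)}. That trick does not transfer literally: there $(\rho_1\rho_2)^2$ is a product of \emph{two} $3$-cycles and a conjugate of it can be arranged to share one $3$-cycle and overlap the other in exactly one point, while here $\rho_0\rho_1=(g,i,h)(a,b,c)(l,m,n)$ is a product of \emph{three} $3$-cycles, and the natural products of its conjugates (with $\rho_2$, $\rho_3$, or with powers of $\rho_1\rho_2$) keep yielding elements that split into several $3$-cycles or disjoint transpositions, so isolating a single prime cycle needs a genuinely different combination. The paper sidesteps this entirely with the explicit element $(\rho_1\rho_2\rho_3\rho_2)^{12}$, a $5$-cycle fixing more than three points, which feeds Theorem~\ref{GJ} applied to $G_0$; the conclusion $G\cong A_n$ then follows from transitivity of $G$ together with the evenness of the generators, which is also the cleanest way to realize the alternative you sketch.
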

\begin{proof}
Let $G$ be a group having one of the permutation representation graphs above.
In this case, $G_3 \cong 2^2:S_3\times S_3$ and $G_{03}\cong D_{24}$ no matter what the value of $n$ is thanks to the shape of the graph. Observe that the left connected component of the graph, obtained when removing the 3-edges, gives the CPR graph of the octahedron. Thus it can easily be checked with {\sc Magma} that $G_3$ is a string C-group with type $\{3,12\}$.
The group $G_0$ is transitive on $n-1$ point, namely all vertices of the graph except $l$. Moreover, the stabilizer of $l$ and $n$ in $G$ has at most two more orbits thanks to the connected components of the permutation representation graph obtained by removing edges labelled 0 and 1. The element $(\rho_1\rho_2\rho_3\rho_2)^3$ moves point $i$ to point $d$ while fixing both $l$ and $n$. Hence $G_0$ is 2-transitive on $n-1$ vertices (all but $l$). Therefore $G_0$ is primitive on these points. Now, the element  $(\rho_1\rho_2\rho_3\rho_2) = (l)(n, j, m)(i,e,g,d,h)(a,c,f,b)...$ is such that the cycles we did not write are transpositions. Indeed, $\rho_1$ does not do anything on these points and so the action on these points is given by $\rho_2\rho_3\rho_2=\rho_3^{\rho_2}$ which is an involution. Hence $(\rho_1\rho_2\rho_3\rho_2)^{12} \in G_0$ is a 5-cycle fixing more than three points. By Theorem~\ref{GJ}, we can therefore conclude that $G_0\cong A_{n-1}$.
As $G_0$ is a simple group, since it is generated by three involutions (namely $\rho_1,\rho_2,\rho_3$), two of which commute, it is a string C-group by~\cite[Theorem 4.1]{CO}.
It remains to check that $G_{0,3} = G_0\cap G_3$ to prove that these graphs give indeed string C-groups. This can be checked with {\sc Magma} for $n=12$ and the result can be extended for any $n$.

The Schl\"afli type is obvious from the permutation representation graph.
\end{proof}
\begin{theorem}\label{05}
Let $n\equiv 0 \mod 4$ with $n\geq 12$.	
The group $A_n$ admits a string C-group representation of rank $5$, with Schl\"afli type $\{3,4,6,n-7\}$, with the following CPR-graph.
\begin{center}
\begin{tabular}{cc}
$(F_4)$
&$\xymatrix@-1.7pc{&& && *+[o][F]{}  \ar@{-}[rr]^{2} &&   *+[o][F]{}  \ar@{-}[rr]^{1}  && *+[o][F]{}  \ar@{-}[rr]^{2} && *+[o][F]{}  \ar@{-}[rr]^{3}  && *+[o][F]{}  \ar@{-}[rr]^{4} && *+[o][F]{}\ar@{-}[rr]^{3}  && *+[o][F]{}  \ar@{-}[rr]^{4} && *+[o][F]{}   \ar@{.}[rr]&& *+[o][F]{}\ar@{-}[rr]^{3}  && *+[o][F]{}  \ar@{-}[rr]^{4} && *+[o][F]{}\ar@{-}[rr]^{3}  && *+[o][F]{}  \ar@{-}[rr]^{4} && *+[o][F]{}\\
&& && && && && && & && && &&\\
*+[o][F]{} \ar@{-}[rr]_2 && *+[o][F]{}  \ar@{-}[rr]_{1} &&*+[o][F]{}  \ar@{-}[rr]_{2}\ar@{-}[uu]^{0} &&*+[o][F]{} \ar@{-}[uu]^{0}  } $\\
&\\
\end{tabular}
\end{center}
\end{theorem}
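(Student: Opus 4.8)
The plan is to apply Proposition~\ref{arp} (or its sharpening, Proposition~\ref{max}): it suffices to show that $G_4$ and $G_0$ are string C-groups and that $G_0\cap G_4=G_{0,4}$, and then, separately, to identify $G$ with $A_n$ by means of Theorem~\ref{GJ}. Label the generators $\rho_0,\ldots,\rho_4$ as read off from $(F_4)$: the two $0$-edges join the short lower component (carrying only labels $2,1,2$) to the core, while the long tail on the right alternates labels $3$ and $4$ and accounts for the last entry $n-7$ of the Schl\"afli type. A direct inspection of the $0$-$1$, $1$-$2$, $2$-$3$ and $3$-$4$ components confirms the type $\{3,4,6,n-7\}$, and since $\rho_0,\rho_1$ are products of two transpositions, $\rho_2$ of four, and $\rho_3,\rho_4$ of $(n-8)/2$ transpositions each (an even number because $4\mid n$), every generator is even, so $G\leq A_n$.

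First I would treat $G_4=\langle\rho_0,\rho_1,\rho_2,\rho_3\rangle$. Deleting the $4$-edges breaks the tail into isolated $3$-edges which are fixed by $\rho_0,\rho_1,\rho_2$ and merely transposed by $\rho_3$; hence $\rho_3=\alpha_3\tau$, where $\tau$ is the product of these transpositions and commutes with $\alpha_0=\rho_0,\alpha_1=\rho_1,\alpha_2=\rho_2$ and with $\alpha_3$. Thus $G_4$ is the sesqui-extension with respect to $\alpha_3$ and $\tau$ of the group $\Phi=\langle\alpha_0,\alpha_1,\alpha_2,\alpha_3\rangle$ acting on the $9$-vertex core, whose size does not depend on $n$. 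By Proposition~\ref{sesqui}, $G_4$ is a string C-group if and only if $\Phi$ is, and this is a single finite verification with {\sc Magma}. Since $G_4$ fixes the last vertex of the tail we also have $\rho_0\notin G_0$ (because $G_0$ preserves the top/bottom orbit decomposition while $\rho_0$ does not), which makes Proposition~\ref{max} available as an alternative to Proposition~\ref{arp} for the intersection step.

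Next I would treat $G_0=\langle\rho_1,\rho_2,\rho_3,\rho_4\rangle$. Deleting the two $0$-edges disconnects the diagram into the long top path on $n-4$ vertices and the four lower vertices. On the top path the four involutions generate a symmetric group $S_{n-4}$, exactly as in the chains of Section~\ref{sectioncpr}, while on the lower four points only $\rho_1,\rho_2$ act, generating a dihedral group; matching the parities of the generators on the two blocks shows that $G_0$ is the index-two subgroup of $S_{n-4}\times \mathrm{D}_8$ respecting both signs. That $G_0$ is a string C-group follows by the same inductive reduction used in Propositions~\ref{Sym2(1)}--\ref{Sym4}, the relevant subgroup intersections being independent of $n$.

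Finally, to see $G\cong A_n$, note that the diagram is connected, so $G$ is transitive; conjugating $\rho_3$ (resp.\ $\rho_2$) by suitable powers of $\rho_0\rho_1$ fuses the orbits of a point stabilizer while fixing a chosen point, so $G$ is primitive, exactly as in the proof of Proposition~\ref{Sym2}. Extracting from a suitable product a cycle of prime length fixing at least three points and applying Theorem~\ref{GJ} gives $G\geq A_n$, whence $G\cong A_n$ by the parity count above. It then remains to verify $G_0\cap G_4=G_{0,4}$; since the orbit shapes of $G_0$, $G_4$ and $G_{0,4}$ stabilize as $n$ grows, this reduces to the single case $n=12$, checked with {\sc Magma}, and the conclusion follows from Proposition~\ref{arp}. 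I expect the main obstacle to be precisely this intersection condition --- in particular justifying that the smallest-case computation propagates to all $n\equiv 0\bmod 4$ --- together with the correct identification of $G_0$; the identifications of $G_4$ and of $G$ itself are comparatively routine.
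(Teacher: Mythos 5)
Your skeleton matches the paper's: you handle $G_4$ exactly as the published proof does (it is a sesqui-extension, with respect to the restriction of $\rho_3$ and the product $\tau$ of the isolated $3$-edges, of a group acting on the $9$-vertex core, which the paper identifies as a string C-group isomorphic to $S_9$ found in the atlas), you invoke Proposition~\ref{arp} for the intersection condition, and you identify $G$ via Theorem~\ref{GJ}. But two of your steps have genuine gaps. The first is your treatment of $G_0$. The assertion that its string C-property ``follows by the same inductive reduction used in Propositions~\ref{Sym2(1)}--\ref{Sym4}'' is not a proof: those propositions concern graphs with long alternating $0$-$1$ paths or the specific shapes pictured there, whereas here $G_0$ acts on a top path labelled $2,1,2,3,4,3,4,\ldots$ together with a disjoint $4$-vertex path labelled $2,1,2$; none of the cited inductions applies to this shape and you never set one up. The paper's argument is direct and non-inductive: $G_{0,1}=\langle\rho_2,\rho_3,\rho_4\rangle\cong S_{n-6}$ and $G_{0,4}=\langle\rho_1,\rho_2,\rho_3\rangle\cong S_5\times D_8$ are rank-3 string C-groups, $\rho_2\rho_3$ has order $6$ so $G_{0,1,4}\cong D_{12}$, the equality $G_{0,1}\cap G_{0,4}=G_{0,1,4}$ is read off the graph, and Proposition~\ref{arp} then yields $G_0$. (Your structural identification of $G_0$ as an index-two subgroup of $S_{n-4}\times D_8$ is plausible but also unproven, and it is not needed for the argument.)

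The second gap is your primitivity mechanism, which fails as stated. In Proposition~\ref{Sym2} the fusion argument works because the graph has a long alternating $0$-$1$ path, so powers of $\rho_0\rho_1$ sweep a conjugate of $\rho_2$ across the graph. In the graph $(F_4)$ the $0$-$1$-components are two components of $3$ vertices each, so $\rho_0\rho_1$ is a product of two $3$-cycles supported on the six points near the junction; conjugating $\rho_2$ or $\rho_3$ by its powers cannot fuse orbits along the long $3$-$4$ tail, so ``exactly as in the proof of Proposition~\ref{Sym2}'' is not available. Primitivity (which the paper asserts) must be obtained differently, and the prime-length cycle you promise but never exhibit is the explicit $3$-cycle $(\rho_2\rho_3)^2$, which the paper uses for Theorem~\ref{GJ}. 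Finally, your reduction of $G_0\cap G_4=G_{0,4}$ to a \textsc{Magma} check at $n=12$ is acceptable in spirit (the paper uses such reductions in Theorems~\ref{24}, \ref{25} and~\ref{34}), but here the paper replaces it by an orbit argument valid uniformly in $n$: the orbits of $G_0\cap G_4$ must be suborbits of those of both $G_0$ and $G_4$, which confines the intersection to $G_{0,4}$.
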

\begin{proof}
Let $G$ be a group having one of the permutation representation graphs above.
In this case, $G_4$ is a sesqui-extension of a string C-group isomorphic to $S_9$, that can be found for instance in the atlas~\cite{atlasl}. The group $G_{0,1}$ is a string C-group isomorphic to $S_{n-6}$ and  $G_{0,4}\cong S_5\times D_8$.  Now, as $\rho_2\rho_3$ is of order 6, $G_{0,1,4}\cong D_{12}$ and it is obvious from the permutation representation graph that $G_{0,4} \cap G_{0,1} = G_{0,1,4}$  and $G_{0,4} \cap G_{1,4} = G_{0,1,4}$. Hence $G_0$ and $G_4$ are string C-group by Proposition~\ref{arp}. As $G_0 \cap G_4$ must have orbits that are suborbits of those of $G_0$ and of those of $G_4$, we readily see that $G_0 \cap G_4 = G_{0,4}$. This concludes the proof that every graph of shape $(F_4)$ gives a string C-group. As $G$ is an even primitive group and  $(\rho_2\rho_3)^2$ is a $3$-cycle, we have that $G$ is isomorphic to $A_n$ by~Theorem~\ref{GJ}.

The Schl\"afli type is obvious from the permutation representation graph.
\end{proof}
\subsection{The odd case}

%
%
\begin{theorem}\label{1r}
Let $n\geq 12$ be an integer with $n\equiv 1 \mod 4$.
Let $4\leq r \leq (n-1)/2$.
The group $A_n$ admits a string C-group representation of rank $r$, with Schl\"afli type
$\{10, 3^{  \frac{n-1}{2}-2}\}$ when $r=  \frac{n-1}{2}$ and $\{10, 3^{r-4}, 6, \frac{n-1}{2}-r+3\}$ when $r<  \frac{n-1}{2}$, and with the following CPR graph.

$$ \xymatrix@-1.7pc{&& *+[o][F]{}  \ar@{-}[rr]^1 && *+[o][F]{}  \ar@{-}[rr]^2 && *+[o][F]{}  \ar@{-}[rr]^3 && *+[o][F]{}   \ar@{.}[rr] && *+[o][F]{}  \ar@{-}[rr]^{r-2} && *+[o][F]{}  \ar@{-}[rr]^{r-1} && *+[o][F]{}   \ar@{-}[rr]^{r-2} && *+[o][F]{} \ar@{.}[rr] && *+[o][F]{}  \ar@{-}[rr]^{r-2} && *+[o][F]{}  \ar@{-}[rr]^{r-1} && *+[o][F]{}   \ar@{-}[rr]^{r-2} && *+[o][F]{}\\
&& && && && && && && && && && && &&\\
 *+[o][F]{}  \ar@{-}[rr]_0 && *+[o][F]{}  \ar@{-}[rr]_1 && *+[o][F]{}   \ar@{-}[uu]^0   \ar@{-}[rr]_2 && *+[o][F]{}  \ar@{-}[uu]^0   \ar@{-}[rr]_3 && *+[o][F]{}  \ar@{-}[uu]^0     \ar@{.}[rr] && *+[o][F]{}  \ar@{-}[uu]_0   \ar@{-}[rr]_{r-2} && *+[o][F]{}  \ar@{-}[uu]_0   \ar@{-}[rr]_{r-1} &&  *+[o][F]{} \ar@{-}[uu]_0  \ar@{-}[rr]_{r-2} && *+[o][F]{}   \ar@{-}[uu]_0  \ar@{.}[rr] && *+[o][F]{}  \ar@{-}[uu]_0   \ar@{-}[rr]_{r-2} && *+[o][F]{}  \ar@{-}[uu]_0   \ar@{-}[rr]_{r-1} &&  *+[o][F]{} \ar@{-}[uu]_0  \ar@{-}[rr]_{r-2} && *+[o][F]{}   \ar@{-}[uu]_0  }$$
\end{theorem}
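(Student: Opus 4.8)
The plan is to verify the string C-group property through Proposition~\ref{arp}, after an induction that takes the alternating-group graphs of Table~\ref{TT} as base cases. Note first that at the maximal rank $r=(n-1)/2$, which forces $r$ even since $n\equiv 1\pmod 4$, the displayed graph is exactly the CPR graph of $A_{2r+1}$ of type $\{10,3^{r-2}\}$ in Table~\ref{TT}, while at $n=2r+3$, which forces $r$ odd, it is the CPR graph of $A_{2r+3}$ of type $\{10,3^{r-4},6,4\}$ also in that table; both are already known to be string C-group representations by Theorem~\ref{maintheorem}. These furnish the base cases of a double induction, primary on the rank $r$ (which will supply $G_{r-1}$) and secondary on the degree $n$ at fixed $r$. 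In the secondary step, passing from $n$ to $n+4$ appends one zigzag block of labels $r-2,\,r-1,\,r-2$ to the two rows, lengthening only the cycle of $\rho_{r-2}\rho_{r-1}$ and hence raising the last entry of the Schl\"afli symbol, so that the symbol reads $\{10,3^{r-4},6,(n-1)/2-r+3\}$ as claimed; the type is read directly off the labels of the two rows.

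First I would identify the group. The graph is connected, so $G$ is transitive on the $n$ points, and counting the edges of each label shows that every $\rho_i$ is a product of an even number of transpositions, giving $G\leq A_n$. For the reverse inclusion I would establish primitivity exactly as in the proof of Proposition~\ref{Sym2}: the stabiliser of the leftmost vertex has the orbits of $G_0$, and suitable conjugates of the generators by powers of $\rho_0\rho_1$ fuse these orbits while fixing that vertex, so $G$ is $2$-transitive and hence primitive. Since the graph contains an explicit $3$-cycle, of the kind exhibited in the proof of Proposition~\ref{Sym2(1)}, which is a cycle of prime length fixing at least three points when $n\geq 12$, Theorem~\ref{GJ} gives $G\geq A_n$, whence $G\cong A_n$.

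Next I would check the hypotheses of Proposition~\ref{arp} for the pair $(G_0,G_{r-1})$. Deleting the $0$-edges (the vertical rungs together with the leftmost horizontal $0$-edge) disconnects the two rows, so $G_0$ embeds in a product of two symmetric groups acting on the top and bottom paths; by the analysis of such graphs in Proposition~\ref{Sym2(1)} together with Proposition~\ref{sesqui} it is a string C-group. Deleting the $(r-1)$-edges returns a graph of the same shape at rank $r-1$, so $G_{r-1}$ is a string C-group by the primary induction on $r$ (the shortest tails being matched by hand against the $A_{2r+1}$ and $A_{2r+3}$ entries of Table~\ref{TT}). Finally I would run the orbit argument used repeatedly in this section: $G_0\cap G_{r-1}$ must have orbits refining both those of $G_0$ and those of $G_{r-1}$, and $G_{0,r-1}$ is the largest subgroup compatible with that orbit structure, forcing $G_0\cap G_{r-1}=G_{0,r-1}$. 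Proposition~\ref{arp} then yields that $(G,S)$ is a string C-group.

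The main obstacle I anticipate is the precise identification of $G_0$, $G_{r-1}$ and $G_{0,r-1}$ together with the proof that their orbit structures force the intersection to be exactly $G_{0,r-1}$; in particular, the removal of the $(r-1)$-edges must genuinely return a graph of the same family at lower rank so that the induction closes, and the degenerate shortest-tail cases (where the generic description of $G_{r-1}$ degenerates, and where the $r$ even maximal-rank member has no tail at all) have to be reconciled with Table~\ref{TT}, if necessary by a direct {\sc Magma} check for the smallest admissible $n$. A secondary technical point is exhibiting a genuine prime-length cycle fixing at least three points uniformly in $n$, which I would handle by producing a fixed short word in the generators, independent of the tail length, that acts as a $3$-cycle.
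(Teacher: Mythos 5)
Your global strategy---identify $G\cong A_n$ via Theorem~\ref{GJ}, then verify the intersection property through Proposition~\ref{arp} applied to the pair $(G_0,G_{r-1})$, with sesqui-extensions and Table~\ref{TT} anchoring an induction---is the same as the paper's, and your identification of the two extremal members of the family with the $A_{2r+1}$ and $A_{2r+3}$ entries of Table~\ref{TT} is correct. However, the inductive step on which everything rests fails as stated. Deleting the $(r-1)$-edges does \emph{not} return a graph of the same shape at rank $r-1$: since the whole tail consists of alternating $(r-2)$- and $(r-1)$-edges, removing the $(r-1)$-edges always chops the tail off completely, leaving the \emph{tail-less} rank-$(r-1)$ graph on $2r-1$ points together with disconnected squares with alternating labels $0$ and $r-2$ (and, when $r$ is even, one extra isolated $0$-edge). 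The squares are exactly where Proposition~\ref{sesqui} is needed---not for $G_0$: the two rows of $G_0$ carry identical label sequences, so $G_0$ is a diagonal subgroup isomorphic to the string C-group of the first line of Table~\ref{TT}, whereas Proposition~\ref{Sym2(1)}, which concerns two rows of \emph{different} lengths generating $(S_a\times S_b)^+$, does not apply to it. Much more seriously, when $r$ is even the main component is the tail-less graph of odd rank $r-1$ on $2r-1\equiv 3\pmod 4$ points; there $\rho_0$ is a product of $r-1$ (an odd number of) transpositions, so its group is a symmetric, not an alternating, group. That graph is neither a member of the family your primary induction quantifies over (all of whose rank-$(r-1)$ members have degree $\equiv 1\pmod 4$ and, for odd rank, a nonempty tail), nor an entry of Table~\ref{TT}. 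So the induction does not close for even $r$; the obstacle you flagged at the end is real and is not resolved by a small-case {\sc Magma} check alone, because it recurs at every even rank.

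What is missing is precisely the ingredient the paper's (admittedly terse) proof leans on: one must know that the tail-less two-row graphs of odd rank are themselves CPR graphs (of symmetric groups)---this comes from Table~2 of \cite{flm2}, and it is what the phrase ``$G_{r-1}$ is a sesqui-extension of a string C-group'' rests on, together with the {\sc Magma}-checked base cases $n=13$, $r\in\{4,5\}$ (rank $6$ being known). Either cite those symmetric-group CPR graphs or run a parallel induction through them; an induction confined to the $n\equiv 1\pmod 4$ family cannot work. A smaller point: your Jones-theorem step can be made uniform in $n$ at once by observing, as the paper does, that $(\rho_0\rho_1)^2$ is a genuine $5$-cycle fixing all remaining points; by contrast $(\rho_1\rho_2)^2$ in this graph is a product of \emph{two} $3$-cycles, one in each row, so extracting a $3$-cycle in the style of Proposition~\ref{Sym2(1)} would require the conjugation trick used there, which is avoidable here.
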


\begin{proof}
Let $G$ be a group having one of the permutation representation graphs above.
Clearly $G$ is an even group and it must be primitive as $\rho_0$ cannot preserve a non-trivial block system. Let us prove that $G$ is isomorphic to $A_n$.
We have that $(\rho_0\rho_1)^2$ is a $5$-cycle, hence by Theorem~\ref{GJ}, the group $G\cong A_n$.
It remains to prove that $G$ satisfies the intersection property. 
We know that  for $n=13$, the gorup $G$ is a string C-group of rank 6 and Schl\"afli type $\{10,3,3,3,3\}$. It can be checked with {\sc Magma} that $G$ is also string C-group for $n=13$ and $r\in\{4,5\}$.
By induction we may assume that $G_{r-1}$ is a sesqui-extension of a string C-group. Hence by Proposition~\ref{sesqui}, the group $G_{r-1}$ satisfies the intersection property.
By the first line of Table~\ref{TT}, it is straightforward that $G_0$ is a string C-group. Finally, $G_{0,r-1}=G_0 \cap G_{r-1}\cong S_{r-1}$.
Using this technique, we have just constructed string C-group representations of rank $r$ for every $4\leq r\leq  \frac{n-1}{2}$. Their Schl\"afli types are $\{10, 3^{  \frac{n-1}{2}-2}\}$ when $r=  \frac{n-1}{2}$ and $\{10, 3^{r-4}, 6, \frac{n-1}{2}-r+3\}$ when $r<  \frac{n-1}{2}$.
\end{proof}

%
%
The following theorem gives the string C-groups of rank $r=(n-1)/2$ in the case where $n\equiv 3 \mod 4$.
\begin{theorem}\label{3rmax}~\cite{flm2}
Let $n\geq 15$ be an integer with $n\equiv 3 \mod 4$.
Let $r = (n-1)/2$.
The group $A_n$ admits a string C-group representation of rank $r$, with Schl\"afli type
$\{5, 5, 6, 3^{r-7},6,6,3\}$, and with the following CPR graph.

$$\xymatrix@-0.7pc{ *+[o][F]{}  \ar@{-}[r]^0 & *+[o][F]{}  \ar@{-}[r]^1 &*+[o][F]{}  \ar@{=}[r]^0_2 & *+[o][F]{} \ar@{-}[r]^1 & *+[o][F]{}  \ar@{-}[r]^2 & *+[o][F]{} \ar@{-}[r]^3 & *+[o][F]{} \ar@{.}[r] & *+[o][F]{}  \ar@{-}[r]^{r-3}& *+[o][F]{}  \ar@{-}[r]^{r-2}  & *+[o][F]{}  \ar@{-}[r]^{r-1}  & *+[o][F]{}\\
& & & & &*+[o][F]{}  \ar@{-}[r]_3 & *+[o][F]{}  \ar@{.}[r] & *+[o][F]{}  \ar@{-}[r]_{r-3} & *+[o][F]{}  \ar@{-}[r]_{r-2} & *+[o][F]{}  \ar@{-}[r]_{r-1}\ar@{-}[u]_{r-3}  & *+[o][F]{}\ar@{-}[u]_{r-3}} $$
\end{theorem}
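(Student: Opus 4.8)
The plan is to verify the intersection property with Proposition~\ref{arp}, to establish $G\cong A_n$ separately by means of Theorem~\ref{GJ}, and to read the Schl\"afli type straight off the CPR graph. Throughout I label the top row $v_1,v_2,\ldots$ from the left, so that $\rho_0=(v_1,v_2)(v_3,v_4)$ and $\rho_1=(v_2,v_3)(v_4,v_5)$, the double edge accounting for the second occurrence of label $0$ (and of label $2$) on the pair $\{v_3,v_4\}$. Every generator is a product of an even number of transpositions, so $G\le A_n$. The graph is connected, hence $G$ is transitive; and since $\rho_1,\ldots,\rho_{r-1}$ all fix $v_1$ while deleting the $0$-edges leaves $v_1$ isolated and the remaining $n-1$ vertices connected, the stabiliser $G_{v_1}\supseteq G_0$ is transitive on those $n-1$ points, so $G$ is $2$-transitive and in particular primitive. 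A direct computation gives $\rho_0\rho_1=(v_1,v_3,v_5,v_4,v_2)$, a $5$-cycle fixing every vertex from $v_6$ onwards and every bottom vertex, hence at least three points as $n\ge 15$. Theorem~\ref{GJ} then forces $G\ge A_n$, so $G=A_n$.

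The substantial part is the intersection property, which I would handle by induction on $n$ within the class $n\equiv 3\bmod 4$, the base case $n=15$ (so $r=7$) being settled with {\sc Magma} or the atlas~\cite{atlasl}. Deleting the $(r-1)$-edges isolates a single rung $\{c,c'\}$ joined by an $(r-3)$-edge and leaves a connected configuration of the same type on $n-2$ vertices; hence $G_{r-1}$ is the sesqui-extension of that smaller string C-group, supplied by the induction hypothesis, with respect to $\rho_{r-3}$ and the transposition $(c,c')$, and is a string C-group by Proposition~\ref{sesqui}. The subgroup $G_0$ fixes $v_1$; being even, transitive on the other $n-1$ points, primitive and containing the $5$-cycle $\rho_1\rho_2=(v_2,v_4,v_6,v_5,v_3)$, it is isomorphic to $A_{n-1}$ by Theorem~\ref{GJ}, and one checks it is a string C-group by the same inductive mechanism applied to this lower-rank representation. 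It then remains to verify $G_0\cap G_{r-1}=G_{0,r-1}$, which I would obtain by the orbit-comparison argument used repeatedly in Section~\ref{sectioncpr}: any common element has orbits refining both those of $G_0$ and those of $G_{r-1}$, and $G_{0,r-1}$ is the largest subgroup with that property. Proposition~\ref{arp} then yields the string C-group property, and the type $\{5,5,6,3^{r-7},6,6,3\}$ is immediate from the graph, the two leading $5$'s being the orders of the two five-cycles forced by the head with its $\{0,2\}$ double edge.

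I expect the main obstacle to be the inductive identification of $G_{r-1}$: one must confirm that the isolated rung $\tau=(c,c')$ does \emph{not} already lie in $G_{r-1}$, so that the non-split case of Proposition~\ref{sesqui}(1) holds and Proposition~\ref{sesqui}(2) genuinely transfers the string C-group property rather than only the abstract isomorphism. Closing the recursion is the delicate bookkeeping, since $G_{r-1}$ drops the degree into the residue class $n-2\equiv 1\bmod 4$ and $G_0\cong A_{n-1}$ is a non-maximal-rank representation in the class $n-1\equiv 2\bmod 4$, so the induction for Theorem~\ref{3rmax} is really interlocked with the constructions for the other residues; once both subgroups are pinned down, the orbit-comparison computation for the intersection is routine.
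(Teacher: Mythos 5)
A preliminary remark: the paper itself gives no proof of this statement --- the theorem carries the citation \cite{flm2} and is imported as a known result (the paper only proves the theorems that build on it, such as Theorem~\ref{3r}). So your attempt must stand on its own. Its first paragraph does: evenness of the generators, connectivity, the fact that deleting the $0$-edges isolates $v_1$ while leaving the other $n-1$ vertices connected (hence $2$-transitivity and primitivity of $G$), and the $5$-cycle $\rho_0\rho_1$ fixing at least three points give $G\cong A_n$ via Theorem~\ref{GJ}; this is correct and matches the style of argument used throughout Section~\ref{proofMainT}.

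The genuine gap is in the intersection property: your induction does not close. Deleting the $(r-1)$-edges does \emph{not} leave ``a connected configuration of the same type on $n-2$ vertices''. In the residual graph the square of alternating $(r-1)$- and $(r-3)$-edges degenerates to a single $(r-3)$-rung joining the right-hand ends of the two chains, and the degree $n-2\equiv 1\bmod 4$ lies outside the hypothesis of the theorem being proved; so the inductive hypothesis says nothing about the group $\Phi$ of which $G_{r-1}$ is a sesqui-extension. The same happens for $G_0$: it lives on $n-1\equiv 2\bmod 4$ points with $r-1$ generators (incidentally this \emph{is} the maximal rank $\lfloor ((n-1)-1)/2\rfloor$ for $A_{n-1}$, not a non-maximal one as you state), again outside your induction class; moreover your assertion that $G_0$ is primitive is itself unproved, since $G_{0,1}$ has $\{v_3,v_4\}$ as an orbit separate from the large orbit, so $2$-transitivity of $G_0$ needs an explicit fusing element. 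You do notice the mismatch of residue classes, but calling it ``delicate bookkeeping \ldots\ interlocked with the constructions for the other residues'' is naming the missing proof, not giving it. Conversely, the point you single out as the expected main obstacle --- that $\tau=(c,c')\notin G_{r-1}$ --- is trivial: $\tau$ is an odd permutation while $G_{r-1}\leq G\leq A_n$. A workable repair for $G_0$ is to observe that its permutation representation graph, with all labels shifted down by one, is precisely the graph from \cite{flm2} that opens the proof of Theorem~\ref{evenr} (maximal rank for even degree $\equiv 2\bmod 4$), so its string C-group property can be quoted; the group $\Phi$ underlying $G_{r-1}$ then still needs a separate argument (in the spirit of Propositions~\ref{Sym2} and~\ref{Sym3}, or of the group $K$ in the proof of Theorem~\ref{3r}) --- or one simply cites \cite{flm2} for the whole theorem, which is exactly what the paper does.
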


From these examples, we construct examples of the same rank but for groups of degree $n+4k$ where $k$ is an integer by adding a sequence of alternating 0- and 1-edges of length $4k$ between the first and the second 2-edge (counting from the left).
\begin{theorem}\label{3r}
Let $n\geq 15$ be an integer with $n\equiv 3 \mod 4$.
Let $7 \leq r < (n-1)/2$.
The group $A_n$ admits a string C-group representation of odd rank $r$, with Schl\"afli type
$\{n-2(r-2), 12, 6, 3^{r-7},6,6,3\}$, and with the following CPR graph.

$$\xymatrix@-0.7pc{ *+[o][F]{}  \ar@{-}[r]^0 & *+[o][F]{}  \ar@{-}[r]^1 &*+[o][F]{}  \ar@{=}[r]^0_2 & *+[o][F]{} \ar@{-}[r]^1 &  *+[o][F]{}  \ar@{-}[r]^0 & *+[o][F]{}  \ar@{-}[r]^1 & *+[o][F]{}  \ar@{.}[r] & *+[o][F]{}  \ar@{-}[r]^1 &*+[o][F]{}  \ar@{-}[r]^2 & *+[o][F]{} \ar@{-}[r]^3 & *+[o][F]{} \ar@{.}[r] & *+[o][F]{}  \ar@{-}[r]^{r-3}& *+[o][F]{}  \ar@{-}[r]^{r-2}  & *+[o][F]{}  \ar@{-}[r]^{r-1}  & *+[o][F]{}\\
& & & & & & & & &*+[o][F]{}  \ar@{-}[r]_3 & *+[o][F]{}  \ar@{.}[r] & *+[o][F]{}  \ar@{-}[r]_{r-3} & *+[o][F]{}  \ar@{-}[r]_{r-2} & *+[o][F]{}  \ar@{-}[r]_{r-1}\ar@{-}[u]_{r-3}  & *+[o][F]{}\ar@{-}[u]_{r-3}} $$
\end{theorem}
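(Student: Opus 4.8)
The plan is to follow the template established in the proof of Theorem~\ref{evenr}: first identify the permutation group $G=\langle\rho_0,\ldots,\rho_{r-1}\rangle$ defined by the displayed CPR graph as $A_n$, and then verify the intersection property through the two one-generator-removed subgroups $G_0$ and $G_{r-1}$ together with Proposition~\ref{arp}. The structural feature I would exploit throughout is that the entire right-hand portion of the graph (all edges of label $\geq 2$, together with the double-up structure at the far right) is identical to that of Theorem~\ref{3rmax} and is untouched by the inserted alternating $0$--$1$ chain; only the left-hand chain grows with $n$. This is what makes the tail $6,3^{r-7},6,6,3$ of the Schl\"afli symbol coincide with the maximal-rank case, while the first entry changes to the order $n-2(r-2)$ of $\rho_0\rho_1$, which is read directly off the length of the $0$--$1$ chain (the double edge of labels $0$ and $2$ contributing one extra $0$), and the second changes to $12$.

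To prove $G\cong A_n$, I would first note that the graph is connected, so $G$ is transitive, and that each $\rho_i$ is an even permutation, so $G\leq A_n$. Primitivity follows as in the proof of Theorem~\ref{1r}: $\rho_0$ preserves no nontrivial block system. Since the right-hand part of the graph is unchanged, a suitable power of a product of the right-hand generators (supported entirely on the fixed right block) yields a cycle of prime length $p\in\{3,5\}$ fixing at least three points, exactly as the order-$3$ element witnessed by the last entry of the Schl\"afli type. Theorem~\ref{GJ} then gives $G\geq A_n$, and combined with $G\leq A_n$ this forces $G\cong A_n$.

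For the intersection property I would analyse $G_0$ and $G_{r-1}$ separately. Deleting the $(r-1)$-edges isolates the two right-most vertices as a single edge of label $r-3$ on which only $\rho_{r-3}$ acts; hence $G_{r-1}$ is a sesqui-extension, in the sense of Proposition~\ref{sesqui}, of the group $\Phi$ acting on the remaining main body, and $G_{r-1}$ is a string C-group precisely when $\Phi$ is. The graph of $\Phi$ is of the two-row type handled in Propositions~\ref{Sym2(1)} and~\ref{Sym2}, so $\Phi$ is a symmetric group (or a group $(S_a\times S_b)^+$), and therefore $G_{r-1}$ is a string C-group. Deleting instead the $0$-edges collapses the long left chain and leaves essentially the fixed right-hand structure, whose string C-group property follows from Table~\ref{TT} and Proposition~\ref{sesqui}; this identifies $G_0$.

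The final and hardest step is to establish $G_0\cap G_{r-1}=G_{0,r-1}$. Since the reverse inclusion is automatic, it suffices to bound $G_0\cap G_{r-1}$ from above: its orbits must simultaneously refine those of $G_0$ and those of $G_{r-1}$, and $G_{0,r-1}$ is the largest subgroup compatible with both orbit partitions. Because neither these orbit partitions nor the local structure around the relevant generators change once $n$ is large, the equality need only be confirmed at the smallest admissible degree, namely the base case $n=2r+1$ of Theorem~\ref{3rmax} (with a direct {\sc Magma} check for the first few values of $k$), and then extends to all $n\equiv 3 \mod 4$; Proposition~\ref{arp} then completes the proof. I expect the genuine difficulty to lie precisely here: the double edge of labels $0$ and $2$ couples $\rho_0$ and $\rho_2$, so both the identification of $G_0$ and the orbit bookkeeping for $G_0\cap G_{r-1}$ are more delicate than in the even case of Theorem~\ref{evenr}.
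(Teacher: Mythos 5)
Your high-level skeleton (identify the group, prove $G_0$ and $G_{r-1}$ are string C-groups, then apply Proposition~\ref{arp}) matches the paper's, and your observation that deleting the $(r-1)$-edges isolates a single $(r-3)$-edge, so that $G_{r-1}$ is a sesqui-extension of the group $\Phi$ acting on the main body, is correct. The genuine gap is in how you support the two pillars. First, $\Phi$ is \emph{not} of the type handled by Propositions~\ref{Sym2(1)} and~\ref{Sym2}: those graphs consist of two plain alternating $0$--$1$ chains merging into a common tail, with no multiple edges, whereas the graph of $\Phi$ retains the double edge with labels $0,2$, a bottom row that only starts at label $3$, and a vertical $(r-3)$-edge. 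The double edge is exactly the feature those propositions cannot see, and it is why the paper takes a different route: it descends one level further to $G_{r-2,r-1}$, writes that as a sesqui-extension of a group $K$, proves $K\cong(S_a\times S_b)^+$ is a string C-group by induction on the rank with base case Proposition~\ref{Sym4} (the proposition designed precisely for chains containing the $0$--$2$ double edge), and then reassembles $G_{r-1}$ via Proposition~\ref{arp} using $G_{0,r-1}\cap G_{r-2,r-1}=G_{0,r-2,r-1}$. Without Proposition~\ref{Sym4} or an equivalent treatment of the double edge, your argument for $G_{r-1}$ does not go through.

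Your treatment of $G_0$ fails for the same underlying reason: deleting the $0$-edges does not leave ``essentially the fixed right-hand structure''. It also leaves a path on four vertices with labels $1,2,1$ (the $2$ coming from the double edge), on which $\langle\rho_1,\rho_2\rangle$ acts as $D_8$, plus isolated $1$-edges. Since \emph{two} generators, $\rho_1$ and $\rho_2$, act nontrivially outside the big orbit, $G_0$ is not a sesqui-extension of any group from Table~\ref{TT} and Proposition~\ref{sesqui} cannot be invoked; indeed the paper computes $G_0\cong A_{2(r-1)}{:}D_8$, which is neither symmetric nor alternating. The paper instead proves $G_0$ is a string C-group through its subgroups $G_{0,1}$ (identified with the corresponding subgroup in Theorem~\ref{3rmax}) and $G_{0,r-1}$ (handled by Propositions~\ref{Sym3} and~\ref{sesqui} together with Proposition 2E17 of~\cite{arp}). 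This structural computation also buys the key intersection for free: $G_{0,r-1}\cong S_{2(r-2)}{:}D_8$ is \emph{maximal} in $G_0$, whence $G_0\cap G_{r-1}=G_{0,r-1}$ uniformly in $r$ and $n$ --- avoiding your fallback to {\sc Magma} checks ``for the first few values of $k$'', which cannot cover the infinitely many ranks $r$ that occur as $n$ grows. A smaller quibble: the natural order-$3$ element $\rho_{r-2}\rho_{r-1}$ is a product of \emph{two} $3$-cycles, so no power of it is a cycle of prime length fixing three points; your appeal to Theorem~\ref{GJ} needs the conjugation trick of Proposition~\ref{Sym2(1)} or a different witness before it can be applied.
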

\begin{proof}
Let $G$ be a group having the permutation representation graph above.
The group $G_0$ is acting as $S_{2(r-1)}$ on the orbit of size $2(r-1)$ and as $D_8$ on the orbit of size 4, making it isomorphic to  $A_{2(r-1)}:D_8$. Observe that $G_{0}$ has a structure that only depends on the rank, not on the degree of $G$.


The group $G_{0,r-1}$ is isomorphic to $S_{2(r-2)}:D_8$. It is a maximal subgroup of $G_0$. Hence
$G_0\cap G_{r-1} = G_{0,r-1}$. 

Let us now prove that $G_0$ and $G_{r-1}$ are string C-groups.
We start with $G_0$. The group $G_{0,1}$ is the same (up to removing the fixed points) as the one of Theorem~\ref{3rmax}. Hence it is a string C-group.
The group $G_{0,r-1}$ has the following permutation representation graph, where there might be more than one 1-edge disconnected from the rest of the graph.

$$\xymatrix@-0.7pc{ *+[o][F]{}  \ar@{}[r] & *+[o][F]{}  \ar@{-}[r]^1 &*+[o][F]{}  \ar@{-}[r]^2 & *+[o][F]{} \ar@{-}[r]^1 &  *+[o][F]{}  \ar@{}[r] & *+[o][F]{}  \ar@{-}[r]^1 & *+[o][F]{}  \ar@{}[r] & *+[o][F]{}  \ar@{-}[r]^1 &*+[o][F]{}  \ar@{-}[r]^2 & *+[o][F]{} \ar@{-}[r]^3 & *+[o][F]{} \ar@{.}[r] & *+[o][F]{}  \ar@{-}[r]^{r-3}& *+[o][F]{}  \ar@{-}[r]^{r-2}  & *+[o][F]{}  \ar@{}[r]^{}  & *+[o][F]{}\\
& & & & & & & & &*+[o][F]{}  \ar@{-}[r]_3 & *+[o][F]{}  \ar@{.}[r] & *+[o][F]{}  \ar@{-}[r]_{r-3} & *+[o][F]{}  \ar@{-}[r]_{r-2} & *+[o][F]{}  \ar@{}[r]_{}\ar@{-}[u]_{r-3}  & *+[o][F]{}\ar@{-}[u]_{r-3}} $$

By Proposition~\ref{sesqui}, it suffices to prove that the following graph gives a string C-group to get that $G_{0,r-1}$ is also a string C-group. 

$$\xymatrix@-0.7pc{ *+[o][F]{}  \ar@{-}[r]^1 &*+[o][F]{}  \ar@{-}[r]^2 & *+[o][F]{} \ar@{-}[r]^1 &  *+[o][F]{}  \ar@{}[r] & *+[o][F]{}  \ar@{-}[r]^1 &*+[o][F]{}  \ar@{-}[r]^2 & *+[o][F]{} \ar@{-}[r]^3 & *+[o][F]{} \ar@{.}[r] & *+[o][F]{}  \ar@{-}[r]^{r-3}& *+[o][F]{}  \ar@{-}[r]^{r-2}  & *+[o][F]{}  \ar@{}^{}  \\
& & & & & &*+[o][F]{}  \ar@{-}[r]_3 & *+[o][F]{}  \ar@{.}[r] & *+[o][F]{}  \ar@{-}[r]_{r-3} & *+[o][F]{}  \ar@{-}[r]_{r-2} & *+[o][F]{}  \ar@{}\ar@{-}[u]_{r-3}  } $$

Let us call $H$ the group generated by this permutation representation graph. 
By Proposition~\ref{Sym3} the connected  component, of the graph above, on the right is a string C-group. By Proposition~\ref{sesqui} the graph that we obtain from the graph pictured above by removing the 2-edge on the left is a CPR graph.
Since removing the 2-edge on the left does not change the order of the group $H_1$, by~\cite[Proposition 2E17]{arp} we get that $H$ is a string C-group. Hence $G_0$ is a string C-group.

Let us now prove that $G_{r-1}$ is a string C-group.

The group $G_{r-2, r-1}$  is a sesqui-extension of the group $K$ having  the following permutation representation graph.
$$\xymatrix@-0.7pc{ *+[o][F]{}  \ar@{-}[r]^0 & *+[o][F]{}  \ar@{-}[r]^1 &*+[o][F]{}  \ar@{=}[r]^0_2 & *+[o][F]{} \ar@{-}[r]^1 &  *+[o][F]{}  \ar@{-}[r]^0 & *+[o][F]{}  \ar@{-}[r]^1 & *+[o][F]{}  \ar@{.}[r] & *+[o][F]{}  \ar@{-}[r]^1 &*+[o][F]{}  \ar@{-}[r]^2 & *+[o][F]{} \ar@{-}[r]^3 & *+[o][F]{} \ar@{.}[r] & *+[o][F]{}  \ar@{-}[r]^{r-3}& *+[o][F]{}  \\
& & & & & & & & &*+[o][F]{}  \ar@{-}[r]_3 & *+[o][F]{}  \ar@{.}[r] & *+[o][F]{}  \ar@{-}[r]_{r-3} & *+[o][F]{} } $$
Let  $a$ and $b$ be the sizes of the connected components of the graph above. For $r=6$, $K$ is a sesqui-extension of the C-group of Proposition~\ref{Sym4}, hence by Proposition~\ref{sesqui} it is a C-group isomorphic to $S_a\cong (S_a\times 2)^+$. By induction we may assume that $K_{r-3}$ is a C-group isomorphic to $(S_{a-1}\times S_{b-1})^+$. As $K_0$ is a string C-group and $K_0\cap K_{r-3}=K_{0,r-3}$ we have that 
$K$ is itself a string C-group. Moreover it is clearly isomorphic to $(S_a\times S_b)^+$. With this, using Proposition~\ref{sesqui}, we have that $G_{r-2, r-1}$ is a string C-group. Finally  $G_{0,r-1}\cap G_{r-2,r-1}\leq (D_8\times S_{2(r-3)}\times 2)^+\cong G_{0,r-2,r-1}$.


Hence we have proved that $G_{r-1}$ is a string C-group and therefore that $G$ itself is a string C-group.

It is easy to see from the permutation representation graph in the theorem that the Schl\"afli type of the string C-group of rank $r$ for $A_n$ obtained by this construction is
$\{n-2(r-2), 12, 6, 3^{r-7},6,6,3\}$.
\end{proof}

The previous two theorems permit us to construct examples of all possible odd ranks at least 7 for $A_n$ with $n\equiv 3 \mod 4$ and $n\geq 15$. We now construct an example of rank $(n-3)/2$ for $A_n$ from the example of rank $(n-1)/2$, that we will use to construct all examples of even rank at least 8.

\begin{theorem}\label{3rmax2}
Let $n\geq 19$ be an integer with $n\equiv 3 \mod 4$.
Let $r = (n-1)/2-1$.
The group $A_n$ admits a string C-group representation of rank $r$, with Schl\"afli type
$\{5, 5, 6, 3^{r-8},6,6,6,4\}$, and with the following CPR graph.

$$\xymatrix@-0.7pc{ *+[o][F]{}  \ar@{-}[r]^0 & *+[o][F]{}  \ar@{-}[r]^1 &*+[o][F]{}  \ar@{=}[r]^0_2 & *+[o][F]{} \ar@{-}[r]^1 & *+[o][F]{}  \ar@{-}[r]^2 & *+[o][F]{} \ar@{-}[r]^3 & *+[o][F]{} \ar@{.}[r] & *+[o][F]{}  \ar@{-}[r]^{r-4}& *+[o][F]{}  \ar@{-}[r]^{r-3}  & *+[o][F]{}  \ar@{-}[r]^{r-2}  & *+[o][F]{}  \ar@{-}[r]^{r-1}  & *+[o][F]{}  \ar@{-}[r]^{r-2}  & *+[o][F]{}  \\
& & & & &*+[o][F]{}  \ar@{-}[r]_3 & *+[o][F]{}  \ar@{.}[r] & *+[o][F]{}  \ar@{-}[r]_{r-4} & *+[o][F]{}  \ar@{-}[r]_{r-3} &  *+[o][F]{}  \ar@{-}[r]_{r-2}\ar@{-}[u]_{r-4}  &*+[o][F]{}  \ar@{-}[r]_{r-1}\ar@{-}[u]_{r-4} &  *+[o][F]{}  \ar@{-}[r]_{r-2}\ar@{-}[u]_{r-4}  & *+[o][F]{}\ar@{-}[u]_{r-4}} $$
\end{theorem}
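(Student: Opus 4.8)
The plan is to show that the displayed graph is a CPR graph for $A_n$ by verifying the two ingredients of a string C-group representation: that the group $G$ generated by the labelled involutions is $A_n$, and that the intersection property holds. The construction is modelled on the rank-$((n-1)/2)$ seed of Theorem~\ref{3rmax}, from which it differs only in the right-hand ``tail'': the two-rung ladder ending in a $3$ in the Schl\"afli type is replaced by the longer four-rung gadget ending in a $4$, which is exactly the end-gadget responsible for the terminal $4$ in types such as $\{10,3^{q-4},6,4\}$ in Table~\ref{TT}. I would first record, as in every proof of this section, that the graph is connected (so $G$ is transitive) and that every generator is an even permutation (so $G\leq A_n$). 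The leftmost part of the graph, the double $0,2$-edge together with its prefix, is untouched from Theorem~\ref{3rmax}, so $\rho_0\rho_1$ is again a $5$-cycle, i.e. a cycle of prime length fixing $n-5\geq 3$ points. Since $\rho_0$ preserves no nontrivial block system, $G$ is primitive, and Theorem~\ref{GJ} then forces $G\geq A_n$; together with evenness this gives $G\cong A_n$. This part is routine.

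For the intersection property I would use Proposition~\ref{arp}, so the real work is to identify $G_0$ and $G_{r-1}$, prove each is a string C-group, and check $G_0\cap G_{r-1}=G_{0,r-1}$. The subgroup $G_{r-1}$, obtained by deleting the single $(r-1)$-edge from each row, is the one carrying both special features (the $0,2$-prefix on the left and the modified ladder on the right); I would analyse it exactly as $G_{r-1}$ was analysed in the proof of Theorem~\ref{3r}, peeling off the tail as a sesqui-extension via Proposition~\ref{sesqui} and identifying the remaining connected pieces through Propositions~\ref{Sym3} and~\ref{Sym4} for the ladder-with-double-edge blocks and Proposition~\ref{Sym2(1)} for the resulting $(S_a\times S_b)^+$ factor, concluding by induction on the length of the ladder that $G_{r-1}$ is a string C-group of the expected product-type shape. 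The subgroup $G_0$, obtained by deleting the $0$-edges on the left, has the same shape as the corresponding subgroup in Theorem~\ref{3rmax}; I would again reduce it to a known CPR graph via Proposition~\ref{sesqui} together with the fact, used already in Theorem~\ref{3r} via~\cite[Proposition 2E17]{arp}, that removing a $2$-edge on the left does not change the order of the relevant parabolic, so that $G_0$ is a string C-group.

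With $G_0$ and $G_{r-1}$ in hand, I would pin down $G_0\cap G_{r-1}=G_{0,r-1}$ by the orbit argument used repeatedly above: any element of $G_0\cap G_{r-1}$ must have orbits refining simultaneously those of $G_0$ and of $G_{r-1}$, and $G_{0,r-1}$ is visibly the largest subgroup with that property, so the two coincide (alternatively one verifies that $G_{0,r-1}$ is maximal in $G_0$ and applies Proposition~\ref{max}). Finally the Schl\"afli type is read directly off the graph: the $0,2$-prefix yields the two leading $5$'s, the junctions of the ladder give the $6$'s, the long central chain of $3$-edges gives $3^{r-8}$, and the four-rung end-gadget produces the terminal $6,6,6,4$, so the type is $\{5,5,6,3^{r-8},6,6,6,4\}$. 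The main obstacle is the middle paragraph: correctly bookkeeping the sesqui-extensions and the inductive identification of $G_{r-1}$ when both the left double-edge and the lengthened right tail are present at once, since this is the place where the argument genuinely extends, rather than merely repeats, the proof of Theorem~\ref{3r}; everything else is either inherited from Theorem~\ref{3rmax} or is a routine application of Theorem~\ref{GJ} and of the orbit computation.
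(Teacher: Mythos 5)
Your overall skeleton (evenness and transitivity, a $5$-cycle plus primitivity and Theorem~\ref{GJ} to get $G\cong A_n$, then Proposition~\ref{arp} applied to $G_0$ and $G_{r-1}$) agrees with the paper, but the step you yourself flag as ``the main obstacle'' --- identifying $G_{r-1}$ --- is left unresolved, and it is exactly there that you miss the observation that makes the paper's proof short. Deleting the two $(r-1)$-edges does not leave a graph ``carrying both special features'' that must be attacked by a fresh induction: it disconnects the modified ladder, leaving precisely the CPR graph of Theorem~\ref{3rmax} (for degree $n-4\equiv 3\bmod 4$ and rank $r-1=((n-4)-1)/2$; note the end square of that graph has labels $r-2$ and $r-4$, matching what remains here) together with one detached square with alternating labels $r-2$ and $r-4$. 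On that detached square $\rho_{r-2}$ and $\rho_{r-4}$ act as commuting double transpositions, so $G_{r-1}$ is a sesqui-extension (applied twice, once for each of these two generators) of the already-established string C-group of Theorem~\ref{3rmax}, and Proposition~\ref{sesqui} then gives at once that $G_{r-1}$ is a string C-group --- no induction on the length of the ladder and no appeal to Propositions~\ref{Sym3}, \ref{Sym4} or \ref{Sym2(1)} is needed. This identification is also what the paper uses to obtain $G_0\cap G_{r-1}=G_{0,r-1}$ (sesqui-extension structure plus comparison of orbits), so missing it undermines that step of your plan as well.

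Moreover, your proposal to analyse $G_{r-1}$ ``exactly as $G_{r-1}$ was analysed in the proof of Theorem~\ref{3r}'' would in any case require genuine modification, since the tail in Theorem~\ref{3r} ends in a two-rung square with labels $r-1$, $r-3$, whereas here it ends in the four-rung gadget; the propositions you cite were tailored to the former situation, and you do not supply the adapted induction. A smaller inaccuracy: $G_0$ here is \emph{not} ``the same shape as the corresponding subgroup in Theorem~\ref{3rmax}'', because it retains the lengthened four-rung tail, which the $G_0$ of Theorem~\ref{3rmax} does not have. The paper handles $G_0$ by the techniques of Theorem~\ref{3r} (reduction via Proposition~\ref{sesqui} and \cite[Proposition 2E17]{arp}), which is also what you propose, so that part of your plan is sound; but as written, your proposal establishes neither $G_{r-1}$ nor $G_0$ as a string C-group, and the missing idea for the former is the direct sesqui-extension identification described above.
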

\begin{proof}
Let $G$ be the group having the permutation representation graph in the statement of the theorem.
The group $G_{r-1}$ is a sesqui-extension of the group given in Theorem~\ref{3rmax}.
Hence it is a string C-group.
The group $G_0$ can be proved to be a string C-group using similar techniques as in the proof of the previous theorem.
The fact that $G_0 \cap G_{r-1} = G_{0,r-1}$ follows from the fact that $G_{r-1}$ is a sesqui-extension of the group given in Theorem~\ref{3rmax} and the orbits of the respective subgroups.
\end{proof}

As in the case of odd ranks, from these examples, we construct examples of the same rank but for groups of degree $n+4k$ where $k$ is an integer by adding a sequence of alternating 0- and 1-edges of length $4k$ between the 1-edge and the second 2-edge (counting from the left).

\begin{theorem}\label{3rmax3}
Let $n\geq 19$ be an integer with $n\equiv 3 \mod 4$.
Let $8 \leq r < (n-1)/2-1$.
The group $A_n$ admits a string C-group representation of even rank $r$, with Schl\"afli type
$\{n-2(r-1), 12, 6, 3^{r-8},6,6,6,4\}$, and with the following CPR graph.

$$\xymatrix@-0.7pc{ *+[o][F]{}  \ar@{-}[r]^0 & *+[o][F]{}  \ar@{-}[r]^1 &*+[o][F]{}  \ar@{=}[r]^0_2 & *+[o][F]{} \ar@{-}[r]^1 &  *+[o][F]{}  \ar@{-}[r]^0 & *+[o][F]{}  \ar@{-}[r]^1 & *+[o][F]{}  \ar@{.}[r] & *+[o][F]{} \ar@{-}[r]^1 & *+[o][F]{}  \ar@{-}[r]^2 & *+[o][F]{} \ar@{-}[r]^3 & *+[o][F]{} \ar@{.}[r] & *+[o][F]{}  \ar@{-}[r]^{r-4}& *+[o][F]{}  \ar@{-}[r]^{r-3}  & *+[o][F]{}  \ar@{-}[r]^{r-2}  & *+[o][F]{}  \ar@{-}[r]^{r-1}  & *+[o][F]{}  \ar@{-}[r]^{r-2}  & *+[o][F]{}  \\
& & & & & & & & &*+[o][F]{}  \ar@{-}[r]_3 & *+[o][F]{}  \ar@{.}[r] & *+[o][F]{}  \ar@{-}[r]_{r-4} & *+[o][F]{}  \ar@{-}[r]_{r-3} &  *+[o][F]{}  \ar@{-}[r]_{r-2}\ar@{-}[u]_{r-4}  &*+[o][F]{}  \ar@{-}[r]_{r-1}\ar@{-}[u]_{r-4} &  *+[o][F]{}  \ar@{-}[r]_{r-2}\ar@{-}[u]_{r-4}  & *+[o][F]{}\ar@{-}[u]_{r-4}} $$
\end{theorem}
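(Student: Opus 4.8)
The plan is to run the argument of Theorem~\ref{3r} almost verbatim, with Theorem~\ref{3rmax2} now playing the role that Theorem~\ref{3rmax} played there: the present graphs are obtained from those of Theorem~\ref{3rmax2} by splicing in a block of alternating $0$- and $1$-edges of length $4k$, which is exactly the even-rank analogue of how Theorem~\ref{3r} was built from Theorem~\ref{3rmax}. As in all the companion results, the goal is to invoke Proposition~\ref{arp}, so it suffices to check that $G_0$ and $G_{r-1}$ are string C-groups and that $G_0\cap G_{r-1}=G_{0,r-1}$ (the reverse inclusion being automatic). The identification $G\cong A_n$ follows as in the neighbouring theorems: $G$ is even and transitive (its graph is connected), $\rho_0$ preserves no nontrivial block system so $G$ is primitive, and Theorem~\ref{GJ} applies once one exhibits a cycle of prime length fixing at least three points, an element of the type used in the sibling constructions.

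For $G_{r-1}$ I would argue as in the second half of the proof of Theorem~\ref{3r}. Deleting $\rho_{r-1}$ detaches the right-hand fold, and I would treat $G_{r-1}$ through its own subgroups $G_{0,r-1}$ and $G_{r-2,r-1}$ by applying Proposition~\ref{arp} recursively: $G_{r-2,r-1}$ is a sesqui-extension (Proposition~\ref{sesqui}) of a group $K$ whose CPR graph splits into two components, which an induction on the degree together with Proposition~\ref{Sym4} identifies as $(S_a\times S_b)^+$, while $G_{0,r-1}\cap G_{r-2,r-1}=G_{0,r-2,r-1}$ is read off the orbits. This shows $G_{r-1}$ is a string C-group, and since the graph of Theorem~\ref{3rmax2} is itself the base case $k=0$ of the family, the induction bottoms out on a group already proved to be a string C-group there.

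For $G_0$ I would delete $\rho_0$: the double $0,2$-edge contributes a $D_8$ acting on four points while the $1$-$2$-$3$-$\cdots$ part acts as a symmetric group on the remaining points, giving $G_0\cong A_m:D_8$ for the appropriate $m$, with $G_{0,r-1}\cong S_{m-2}:D_8$ a maximal subgroup; crucially this structure depends only on $r$ and not on the inserted length $4k$. To see that $G_0$ is a string C-group I would again follow Theorem~\ref{3r}: $G_{0,1}$ coincides, after discarding fixed points, with the string C-group of Theorem~\ref{3rmax2}, and the remaining component analysis reduces via Propositions~\ref{Sym3} and~\ref{sesqui} together with \cite[Proposition 2E17]{arp} to a smaller connected graph. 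Maximality of $G_{0,r-1}$ in $G_0$ then forces $G_0\cap G_{r-1}=G_{0,r-1}$, completing the verification, and the Schl\"afli type $\{n-2(r-1),12,6,3^{r-8},6,6,6,4\}$ is read directly off the edge labels.

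The main obstacle I expect is the uniformity of the $G_0$-analysis: one must confirm that inserting the $0$-$1$ block changes only the length of the first entry of the Schl\"afli symbol and the degree $m$, while leaving the isomorphism type $A_m:D_8$, the maximality of $G_{0,r-1}$, and the component structure feeding Propositions~\ref{Sym3} and~\ref{Sym4} intact. This independence of $k$ is precisely what lets a single induction on the degree cover every even rank $8\le r<(n-1)/2-1$, and it is where the bookkeeping is most delicate.
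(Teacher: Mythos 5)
Your proposal is correct and matches the paper's own treatment: the paper gives no detailed proof of this theorem at all, stating only that it can be proved ``either by a proof similar to that of Theorem~\ref{3r} or by a proof similar to that of Theorem~\ref{3rmax2}'' and leaving the details to the reader. Your route --- running the argument of Theorem~\ref{3r} nearly verbatim with Theorem~\ref{3rmax2} serving as the base case of the induction, verifying $G_0$ and $G_{r-1}$ via Propositions~\ref{sesqui}, \ref{Sym3}, \ref{Sym4} and concluding with Proposition~\ref{arp} and Theorem~\ref{GJ} --- is precisely the first of the two options the paper prescribes, worked out in more detail than the paper itself provides.
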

There are two ways to prove this theorem, either by a proof similar to that of Theorem~\ref{3r} or by a proof similar to that of Theorem~\ref{3rmax2}. We therefore leave the details to the interested reader.


\begin{theorem}\label{34}
Let $n\equiv 3 \mod 4$ with $n\geq 15$.	
The group $A_n$ admits a string C-group representation of rank $4$, with Schl\"afli type $\{10,7,4\}$ for $n=15$ and $\{2(n-10),14,4\}$ for $n>15$, with the following CPR-graph.
\begin{center}
\begin{tabular}{cc}
$$\xymatrix@-0.6pc{*+[o][F]{}  \ar@{-}[r]^0 &*+[o][F]{}  \ar@{-}[r]^1 &*+[o][F]{}  \ar@{-}[r]^0 &*+[o][F]{}  \ar@{-}[r]^1 &*+[o][F]{}  \ar@{.}[r] &*+[o][F]{}  \ar@{-}[r]^0 & *+[o][F]{}  \ar@{-}[r]^1 &*+[o][F]{}  \ar@{=}[r]^0_2 & *+[o][F]{}  \ar@{-}[r]^1 & *+[o][F]{}  \ar@{-}[r]^2 & *+[o][F]{}  \ar@{-}[r]^{1} & *+[o][F]{}  \ar@{-}[r]^2 & *+[o][F]{}&\\
&&&& & & & & & *+[o][F]{}  \ar@{-}[r]_{2} & *+[o][F]{}  \ar@{-}[r]_{1}\ar@{-}[u]_{3}  & *+[o][F]{}\ar@{-}[r]_{2}\ar@{-}[u]_{3} & *+[o][F]{}  \ar@{-}[r]_1& *+[o][F]{}  \ar@{-}[r]_2& *+[o][F]{}  \ar@{-}[r]_1& *+[o][F]{}  }$$
\end{tabular}
\end{center}
\end{theorem}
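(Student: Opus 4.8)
The plan is to exhibit $(G,\{\rho_0,\rho_1,\rho_2,\rho_3\})$ as a string C-group representation of $A_n$ by proving two things: that the permutation group $G$ defined by the graph is isomorphic to $A_n$, and that the intersection property holds, via Proposition~\ref{arp} or Proposition~\ref{max}. First I would settle the isomorphism type. The graph is connected, so $G$ is transitive, and each generator is a matching, hence a product of disjoint transpositions; reading the edge counts off the graph (taking care of the double $0/2$ edge, which contributes the same transposition to both $\rho_0$ and $\rho_2$) shows every generator is even, so $G\le A_n$. To apply Theorem~\ref{GJ} I need primitivity together with a cycle of prime length fixing at least three points. The right-hand portion of the graph (the double edge followed by the alternating $1/2$ pattern on the two rows) produces a single $7$-cycle in $\rho_1\rho_2$: for $n=15$ this is $\rho_1\rho_2$ itself, of order $7$, while for $n>15$ the extra $0/1$ edges on the left add a $2$-cycle, raising the order of $\rho_1\rho_2$ to $14$, so that $(\rho_1\rho_2)^2$ is the desired $7$-cycle. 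In either case it fixes $n-7\ge 8$ points. For primitivity I would argue, exactly as in the proof of Theorem~\ref{24}, that $\rho_0$ cannot respect any nontrivial block system because of the long $0/1$ chain together with the double $0/2$ edge. Theorem~\ref{GJ} then forces $G\ge A_n$, and combined with $G\le A_n$ this gives $G\cong A_n$.

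Next I would verify the intersection property. The relevant subgroups are $G_3=\langle\rho_0,\rho_1,\rho_2\rangle$ and $G_0=\langle\rho_1,\rho_2,\rho_3\rangle$, each of rank three, with $G_{0,3}=\langle\rho_1,\rho_2\rangle$. Since $\rho_0$ moves only vertices of the top row while $\rho_1$ and $\rho_2$ act on both rows, deleting the $3$-edges disconnects the graph into a top component (carrying labels $0,1,2$) and a bottom component (carrying labels $1,2$); thus $G_3$ is a rank-three group acting on these two pieces, and $G_0$ is the rank-three group in which $\rho_3$ reconnects them. Both are generated by three involutions two of which commute, so their string C-group property can be confirmed directly, either from the known rank-three criteria or by a {\sc Magma} check. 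I would then test whether $G_{0,3}$ is maximal in $G_0$ with $\rho_3\notin G_3$ (or, symmetrically, maximal in $G_3$ with $\rho_0\notin G_0$); if such maximality holds, Proposition~\ref{max} finishes the argument at once, and otherwise I fall back on establishing $G_0\cap G_3=G_{0,3}$ and invoking Proposition~\ref{arp}.

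The main obstacle will be carrying the intersection check uniformly over all admissible $n$. Because extending the alternating $0/1$ chain on the left only lengthens the top component and leaves untouched the way $\rho_2$ and $\rho_3$ interact on the right, the subgroup $G_0\cap G_3$ is controlled entirely by the fixed right-hand portion of the graph and is therefore independent of $n$. I would make this stabilization precise, reduce the verification to the smallest case $n=15$, confirm $G_0\cap G_3=G_{0,3}$ there with {\sc Magma}, and conclude that the equality persists for every $n\equiv 3\bmod 4$ with $n\ge 15$. The Schl\"afli types $\{10,7,4\}$ for $n=15$ and $\{2(n-10),14,4\}$ for $n>15$ are then read directly from the orders of $\rho_0\rho_1$, $\rho_1\rho_2$, and $\rho_2\rho_3$ in the graph, the first entry growing linearly with $n$ because it measures the length of the lengthening $0/1$ chain.
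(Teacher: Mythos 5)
Your overall architecture (identify the group, then verify the intersection property via Proposition~\ref{arp} with {\sc Magma} checks stabilized over $n$) matches the paper's, but your route to $G\cong A_n$ has a fatal error. The top and bottom rows of the graph are joined only by $3$-edges, so in the subgraph formed by the $1$- and $2$-edges they lie in \emph{disjoint} components, and each of these components is a path on $7$ vertices: on top, the path starts at the last $1$-edge of the $0/1$-chain, passes through the $2$-half of the double edge and runs along the terminal $1,2,1,2$ string; on the bottom, it is the whole $7$-vertex row. Consequently $\rho_1\rho_2$ is a product of \emph{two} disjoint $7$-cycles (together with $2$-cycles from the isolated $1$-edges when $n>15$), and $(\rho_1\rho_2)^2$ is again a product of two $7$-cycles. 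The orders $7$ and $14$ you read off are correct, but no power of $\rho_1\rho_2$ is a single cycle of prime length, so Theorem~\ref{GJ} cannot be applied to it, and the identification $G\cong A_n$ is unproven. (Your primitivity claim, imported from Theorem~\ref{24}, is also just an assertion for this rather different graph.) The paper instead proceeds structurally: it first determines $G_3$, which preserves the two rows and contains $A_{n-7}$ acting on the top $n-7$ vertices while fixing the bottom $7$; this produces an honest $3$-cycle of $G$ fixing at least three points, then $2$-transitivity of $G$ (hence primitivity) is deduced from that $A_{n-7}$, and only then is Theorem~\ref{GJ} invoked, with evenness of the generators giving $G\cong A_n$.

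There is a second, smaller gap: your reduction of the check $G_0\cap G_3=G_{0,3}$ to the single case $n=15$ rests on the premise that the lengthening $0/1$-chain does not affect the intersection, and that premise is false. Passing from $n=15$ to $n\geq 19$ creates isolated $1$-edges in the subgraphs defining $G_0$ and $G_{0,3}$, and their isomorphism types change: $G_0\cong 2^6:A_7:2$ for $n=15$ but $2^6:A_7:2\times 2$ for $n\geq 19$, and $G_{0,3}\cong D_{14}$ for $n=15$ but $C_2\times D_{14}$ for $n\geq 19$. This is precisely why the paper runs the {\sc Magma} verification for both $n=15$ and $n=19$ and argues stability only from $n=19$ onward; checking $n=15$ alone does not suffice. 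Also note that your suggested shortcut of certifying $G_0$ as a string C-group by general rank-three criteria for groups generated by three involutions is not available here, since $G_0$ is far from simple; the paper settles this subgroup by direct computation as well.
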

\begin{proof}
The group $G_0\cong 2^6:A_7:2$ for $n=15$ and $2^6:A_7:2\times 2$ for $n\geq 19$, no matter how big $n$ is.
It can easily be checked with {\sc Magma} that $G_0$ is a string C-group for $n=15$ and $n=19$ and since adding more points to the graph will not change the structure of $G_0$, we can conclude that $G_0$ is a string C-group for every $n\geq 15$.
The group $G_3$ acts as $S_{n-7}$ on the vertices of the top of the graph and acts as $D_{14}$ on the remaining vertices and it is a subgroup of $(A_{n-7}\times D_{14})^+$. We can thus conclude that $G_3\cong A_{n-7}\times D_{14}$. The group $G_{0,3}\cong D_{14}$ for $n=15$ and $C_2\times D_{14}$ when $n\geq 19$ (as there are extra 1-edges in the graph).
The group $G_{2,3}\cong D_{2(n-10)}$. It is obvious from the permutation representation graph that $G_{0,3}\cap G_{2,3} \cong C_2$.
Hence, by Proposition~\ref{arp}, the group $G_3$ is a string C-group.
Now, the intersection $G_{0}\cap G_{3} = G_{0,3}$ need only to be checked in the cases $n\in \{15,19\}$, which can be done with {\sc Magma}. Hence, again, by Proposition~\ref{arp}, we have that $G$ is a string C-group. 

It remains to show that $G\cong A_n$. The structure of $G_3$ shows that the action of $G_3$ on the $(n-7)$ vertices at the top of the graph is $A_{n-7}$. Hence there exists a cycle of order 3 in $G_0$ acting on those vertices. 
This cycle necessarily fixes the 7 other vertices, so it is a cycle of $G$. 
Moreover, that action is $(n-9)$-transitive on the top vertices. Hence the stabilizer, in $G$, of the leftmost vertex of the graph must be transitive on the remaining vertices and $G$ is 2-transitive, therefore primitive.
Then, by Theorem~\ref{GJ}, we can conclude that $G\geq A_n$. Since all generators of $G$ are even permutations, we must have $G\cong A_n$.

The Schl\"afli type follows immediately from the permutation representation graph.
\end{proof}

\begin{theorem}\label{35}
Let $n\equiv 3 \mod 4$ with $n\geq 15$.	
The group $A_n$ admits a string C-group representation of rank $5$, with Schl\"afli type $\{n-10,6,6,5\}$, with the following CPR-graph.
\begin{center}
\begin{tabular}{cc}
$$\xymatrix@-0.5pc{*+[o][F]{}  \ar@{-}[r]^0 &*+[o][F]{}  \ar@{-}[r]^1 &*+[o][F]{}  \ar@{-}[r]^0 &*+[o][F]{}  \ar@{-}[r]^1 &*+[o][F]{}  \ar@{.}[r] &*+[o][F]{}  \ar@{-}[r]^0 & *+[o][F]{}  \ar@{-}[r]^1 &*+[o][F]{}  \ar@{-}[r]^0 & *+[o][F]{}  \ar@{-}[r]^1 & *+[o][F]{}  \ar@{-}[r]^2 & *+[o][F]{}  \ar@{-}[r]^3 & *+[o][F]{_{}}  \ar@{-}[r]^{4}& *+[o][F]{_{}}  \ar@{-}[r]^{3}  & *+[o][F]{}  \ar@{-}[r]^{4}  & *+[o][F]{}\\
&&&& & & & & & & *+[o][F]{}  \ar@{-}[r]_3 & *+[o][F]{}  \ar@{=}[r]_4^2 & *+[o][F]{}  \ar@{-}[r]_{3} & *+[o][F]{}  \ar@{-}[r]_{4}\ar@{-}[u]_{2}  & *+[o][F]{}\ar@{-}[u]_{2}} $$

\end{tabular}
\end{center}
\end{theorem}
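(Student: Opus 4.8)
The plan is to establish the two defining properties of a string C-group representation separately: first that $G:=\langle\rho_0,\ldots,\rho_4\rangle$ is isomorphic to $A_n$, and then that $G$ satisfies the intersection property, the latter through Proposition~\ref{arp} applied to the subgroups $G_0$ and $G_4$. Throughout I would exploit, as in Theorems~\ref{34} and~\ref{3r}, the fact that the relevant subgroup structures and intersections do not depend on the length of the alternating $0/1$ tail on the left of the graph, so that they can be pinned down on the smallest admissible degrees with {\sc Magma} and then read off for all $n\equiv 3\bmod 4$.

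To identify $G$, note that the CPR graph is connected, hence $G$ is transitive on the $n$ points. For primitivity I would argue exactly as in Proposition~\ref{Sym2} and Theorem~\ref{34}: the stabiliser of the leftmost vertex has orbits refining those of $G_0$, and suitable conjugates of $\rho_2$ (equivalently $\rho_3$) by powers of $\rho_0\rho_1$ fuse these suborbits while fixing the leftmost vertex, so $G$ is $2$-transitive and therefore primitive. Next I would produce a cycle of prime length fixing at least three points. The cleanest route, mirroring Theorem~\ref{34}, is to first show that $G_4$ contains a large alternating factor acting on one of its orbits; a $3$-cycle of that factor fixes all remaining points and is thus a genuine $3$-cycle of $G$. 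Alternatively one uses that $\rho_3\rho_4$ has order $5$. Since every generator is an even permutation (which is checked directly on the graph, the double $2/4$-edge being accounted for inside $\rho_2$ and $\rho_4$), we have $G\leq A_n$, and Theorem~\ref{GJ} forces $G\cong A_n$.

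For the intersection property I would first check that $G_0=\langle\rho_1,\rho_2,\rho_3,\rho_4\rangle$ and $G_4=\langle\rho_0,\rho_1,\rho_2,\rho_3\rangle$ are themselves string C-groups. Deleting the $0$-edges breaks the long alternating tail into a matching of isolated $1$-edges, leaving a single core component carrying the $\{1,2,3,4\}$-action; by Proposition~\ref{sesqui} this reduces to the core graph, whose group is recognised via Propositions~\ref{Sym2(1)}--\ref{Sym4} as a string C-group. Deleting the $4$-edges similarly identifies $G_4$ as a string C-group of a product type through the same propositions together with Proposition~\ref{sesqui}. Because these structures are insensitive to the tail length, it suffices to verify the base cases with {\sc Magma}.

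The final and most delicate step is the equality $G_0\cap G_4=G_{0,4}$. The inclusion $G_{0,4}\leq G_0\cap G_4$ is automatic, and for the reverse one argues that any element of $G_0\cap G_4$ must act with orbits simultaneously refining the orbit structures of $G_0$ and of $G_4$, so the largest subgroup compatible with both is exactly $G_{0,4}$. I expect this to be the main obstacle: turning the orbit-refinement heuristic into a rigorous identification and confirming that lengthening the $0/1$ tail genuinely leaves the intersection unchanged, which again reduces to a finite check with {\sc Magma} on the smallest degrees. Once $G_0\cap G_4=G_{0,4}$ is established, Proposition~\ref{arp} yields that $(G,\{\rho_0,\ldots,\rho_4\})$ is a string C-group representation of $A_n$, and the Schl\"afli type $\{n-10,6,6,5\}$ is read directly off the orders of the consecutive products $\rho_{i-1}\rho_i$ in the graph.
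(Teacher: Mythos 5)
Your overall skeleton --- identify $G\cong A_n$ via Theorem~\ref{GJ}, certify $G_0$ and $G_4$ as string C-groups, reduce the intersection property to $G_0\cap G_4=G_{0,4}$ via Proposition~\ref{arp}, and settle degree-independent data by a {\sc Magma} check at the smallest degree --- is exactly the paper's. The gaps are in how you certify $G_0$ and, especially, $G_4$. For $G_0$: after deleting the $0$-edges, the core component is a specific $12$-vertex graph containing the double $2/4$-edge and the two vertical $2$-edges; it belongs to none of the families of Propositions~\ref{Sym2(1)}--\ref{Sym4} (two disjoint or nearly disjoint parallel paths, or a single path with one double edge), so ``recognised via'' those propositions is not a proof. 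This gap is repairable, and is in fact how the paper proceeds: $G_0\cong S_{12}$ independently of $n$, and the C-group property is verified computationally once.

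The serious failure is $G_4$. Contrary to your premise, $G_4$ is \emph{not} insensitive to the tail length: deleting the $4$-edges leaves a path on $n-8$ vertices with labels $0,1,\ldots,0,1,2,3$ (indeed $G_4\cong A_{n-8}\times 2{:}S_3$), so no finite {\sc Magma} check at small degrees can establish its intersection property for all $n$; an $n$-independent argument is mandatory here. Moreover, Proposition~\ref{sesqui} cannot absorb the leftover components: the six-vertex component with labels $3,2,3,2,3$ attaches non-commuting pieces to \emph{both} $\rho_2$ and $\rho_3$, whereas a sesqui-extension adds a single involution, commuting with the whole group, to exactly one generator; and in any case a subdirect product of groups that are string C-groups on each component need not satisfy the intersection property. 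The paper's argument for $G_4$ is the missing idea: apply Proposition~\ref{arp} inside $G_4$, using that $G_{3,4}\cong S_{n-9}$ (identified via Theorem~\ref{GJ}) is a string C-group because it is generated by three involutions two of which commute \cite[Theorem 4.1]{CO}, that $G_{0,4}\cong 2^3{:}S_3\times S_3$ and $G_{0,3,4}\cong D_{12}$ are independent of $n$, and that comparing orbits forces $G_{0,4}\cap G_{3,4}=G_{0,3,4}$. Finally, your alternative route to a prime cycle is also flawed: $\rho_3\rho_4$ is a product of \emph{two disjoint} $5$-cycles (one in each row of the right-hand part of the graph), not a $5$-cycle, so it cannot be fed to Theorem~\ref{GJ}; you must instead extract a $3$-cycle from the alternating factor of $G_4$, as in the proof of Theorem~\ref{34}, which again requires knowing the structure of $G_4$ first.
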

\begin{proof}
The group $G_0\cong S_{12}$ no matter how large $n$ is. One can easily check with {\sc Magma} that the permutation representation graph corresponding to $G_0$ is a CPR graph.
The group $G_{0,4}\cong 2^3:S_3\times S_3$ no matter how large $n$ is. 
 The group $G_{3,4}\cong S_{n-9}$ by Theorem~\ref{GJ}, as it contains a cycle of length 3, namely $(\rho_1\rho_2)^2$ and is obviously 2-transitive on $n-9$ vertices. Moreover, by \cite[Theorem 4.1]{CO}, it a string C-group as it is generated by three involutions, two of which commute.
 The group $G_{0,3,4}\cong D_{12}$. Looking at the respective orbits of $G_{0,4}$ and $G_{3,4}$ we can conclude that $G_{0,4}\cap G_{3,4} = G_{034}$ and therefore $G_4$ is a string C-group.
 Moreover, one can check that the group $G_4\cong A_{n-8}\times 2:S_{3}$ but this is not needed to finish the proof.
 Now, it is easy to check with {\sc Magma} that $G_0\cap G_4 = G_{0,4}$ for $n=15$ and this intersection does not depend on the degree of $G$. Therefore, by Proposition~\ref{arp}, we may conclude that $G$ is a string C-group with the given permutation representation graph.
A similar argument as in the proof of Theorem~\ref{34} permits to show that $G\cong A_n$.
The Schl\"afli type follows immediately from the permutation representation graph.
\end{proof}

\begin{theorem}\label{36}
Let $n\equiv 3 \mod 4$ with $n\geq 15$.	
The group $A_n$ admits a string C-group representation of rank $6$, with Schl\"afli type $\{n-10,6,3,5,3\}$, with the following CPR-graph.
\begin{center}
\begin{tabular}{cc}
$$\xymatrix@-0.5pc{*+[o][F]{}  \ar@{-}[r]^0 &*+[o][F]{}  \ar@{-}[r]^1 &*+[o][F]{}  \ar@{-}[r]^0 &*+[o][F]{}  \ar@{-}[r]^1 &*+[o][F]{}  \ar@{.}[r] &*+[o][F]{}  \ar@{-}[r]^0 & *+[o][F]{}  \ar@{-}[r]^1 &*+[o][F]{}  \ar@{-}[r]^0 & *+[o][F]{}  \ar@{-}[r]^1 & *+[o][F]{}  \ar@{-}[r]^2 & *+[o][F]{}  \ar@{-}[r]^3 & *+[o][F]{_{}}  \ar@{-}[r]^{4}& *+[o][F]{_{}}  \ar@{-}[r]^{5} & *+[o][F]{}\\
& & &&&& & & & & & &*+[o][F]{} \ar@{-}[u]_{3} \ar@{-}[r]_5 & *+[o][F]{}  \ar@{-}[u]_{3}\ar@{-}[r]_4 & *+[o][F]{} \\
& & & &&&& & & & & &*+[o][F]{} \ar@{=}[u]_{2}^4 \ar@{-}[r]_5 & *+[o][F]{}  \ar@{-}[u]_{2}\ar@{-}[r]_4 & *+[o][F]{} \ar@{=}[u]^{2}\ar@{-}[u]_{3,5}
} $$

\end{tabular}
\end{center}
\end{theorem}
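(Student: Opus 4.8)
The plan is to apply Proposition~\ref{arp} with $r=6$: once we know that $G_0=\langle\rho_1,\ldots,\rho_5\rangle$ and $G_5=\langle\rho_0,\ldots,\rho_4\rangle$ are string C-groups, it suffices to verify the single condition $G_0\cap G_5=G_{0,5}$. As in Theorems~\ref{34} and~\ref{35}, the two halves of the graph play asymmetric roles: deleting $\rho_0$ destroys the long alternating $0$--$1$ tail on the left, so $G_0$ moves only a bounded number of points and has a structure independent of $n$, whereas $G_5$ still contains the whole tail (through $\rho_0$ and $\rho_1$) and is the ``large'' factor whose degree grows with $n$. First I would read off the orbits of $G_0$ from the graph with all $0$-edges removed and identify $G_0$ explicitly; since this subgroup does not depend on $n$, its being a string C-group can be certified with {\sc Magma}~\cite{magma} at the smallest admissible degree $n=15$.

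For $G_5$ I would use Proposition~\ref{arp} internally, analysing the subgroups $G_{0,5}=\langle\rho_1,\rho_2,\rho_3,\rho_4\rangle$ and $G_{4,5}=\langle\rho_0,\rho_1,\rho_2,\rho_3\rangle$ of $G_5$. The subgroup $G_{4,5}$ is transitive, contains a $3$-cycle (of the kind $(\rho_1\rho_2)^2$ used in the proof of Theorem~\ref{35}) and is $2$-transitive on its support, so Theorem~\ref{GJ} identifies it as a symmetric or alternating group on the appropriate number of points; being generated by three involutions two of which commute, it is a string C-group by~\cite[Theorem 4.1]{CO}. The subgroup $G_{0,5}$ has a fixed structure and $G_{0,4,5}=\langle\rho_1,\rho_2,\rho_3\rangle$ is small, so comparing the orbits of $G_{0,5}$ and $G_{4,5}$ yields $G_{0,5}\cap G_{4,5}=G_{0,4,5}$, whence $G_5$ is a string C-group. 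Alternatively one could recognise $G_5$ as a sesqui-extension of a group already handled in Theorem~\ref{3rmax} or in Propositions~\ref{Sym3}--\ref{Sym4} and invoke Proposition~\ref{sesqui}.

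With both $G_0$ and $G_5$ established, the decisive step is the equality $G_0\cap G_5=G_{0,5}$. Here I would argue, exactly as before, that any element of $G_0\cap G_5$ must have orbits that are simultaneously suborbits of $G_0$ and of $G_5$; since adding further $0$--$1$ edges to the tail changes neither $G_0$ nor the relevant suborbit pattern, the intersection is independent of $n$ and can be checked once with {\sc Magma} for $n=15$ (and, if the tail parity requires it, also for $n=19$, as in Theorem~\ref{34}). Finally, to see that $G\cong A_n$ I would fuse the orbits of a point stabiliser by suitable conjugates of the short generators to make $G$ $2$-transitive, hence primitive, exhibit a cycle of prime length fixing at least three points supported inside the right-hand gadget, and apply Theorem~\ref{GJ} to obtain $G\geq A_n$; since all generators are even, $G\cong A_n$. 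The Schl\"afli type $\{n-10,6,3,5,3\}$ is then read directly off the graph. I expect the genuine obstacle to be pinning down the intricate three-row gadget on the right: the double edges labelled $2,4$ and the extra $3,5$ incidences make the local structure of $G_0$ and $G_{0,5}$ delicate, and it is precisely this configuration that must be controlled before the $n$-independence of the intersection can be trusted.
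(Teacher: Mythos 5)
Your outer scaffolding --- Proposition~\ref{arp} applied to $G_0$ and $G_5$, {\sc Magma} verification at the smallest admissible degree, $n$-independence of the intersections, and Theorem~\ref{GJ} together with evenness to conclude $G\cong A_n$ --- coincides with the paper's proof. The genuine gap is in your inner step for $G_5$. You assert that $G_{4,5}=\langle\rho_0,\rho_1,\rho_2,\rho_3\rangle$ is transitive, that Theorem~\ref{GJ} identifies it as a symmetric or alternating group, and that it is a string C-group by~\cite[Theorem 4.1]{CO} because it is ``generated by three involutions two of which commute''. None of these claims holds: $G_{4,5}$ is generated by \emph{four} involutions, so the Conder--Oliveros criterion does not apply to it, and it is not even transitive --- deleting the $4$- and $5$-edges from the CPR graph leaves four connected components (one of size $n-8$ containing the whole $0$--$1$ tail, two of size $3$, and one of size $2$), so Theorem~\ref{GJ}, which presupposes primitivity, cannot be invoked. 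The subgroup that actually has the properties you describe sits one level deeper: $G_{3,4,5}=\langle\rho_0,\rho_1,\rho_2\rangle$, which is exactly the group $G_{3,4}$ of Theorem~\ref{35}; it is generated by three involutions two of which commute, contains the $3$-cycle $(\rho_1\rho_2)^2$, and acts $2$-transitively on its orbit of size $n-9$, giving $S_{n-9}$.

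This is precisely how the paper proceeds: it first proves that $G_{4,5}$ is a string C-group by Proposition~\ref{arp} applied to $G_{3,4,5}\cong S_{n-9}$ and $G_{0,4,5}\cong 2^2{:}S_3\times S_3$ (their intersection being $G_{0,3,4,5}$, independently of $n$), then climbs up to $G_5$ using $G_{4,5}$ and $G_{0,5}\cong S_7\times A_5$, and only then applies Proposition~\ref{arp} a third time, to $G_0\cong S_{12}$ and $G_5$, to get $G$. Your fallback --- recognising $G_5$ as a sesqui-extension of a group from Theorem~\ref{3rmax} or Propositions~\ref{Sym3}--\ref{Sym4} --- does not repair the step: with the $5$-edges removed, the graph of $G_5$ splits into a component of size $n-5$ (containing a $\{2,4\}$-double edge) and a component of size $5$ on which three different generators act nontrivially, so it matches none of those graphs and is not a sesqui-extension with respect to a single generator. (You also never argue that $G_{0,5}$ is a string C-group, which Proposition~\ref{arp} requires; in the paper this is covered by its $n$-independent structure $S_7\times A_5$ and the {\sc Magma} check.) The missing idea, in short, is the extra level of decomposition down to the rank-three subgroup $\langle\rho_0,\rho_1,\rho_2\rangle$ before Proposition~\ref{arp} can be iterated back up to $G$.
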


\begin{proof}
The group $G_0\cong S_{12}$ no matter how big $n$ is. One can easily check with {\sc Magma} that the permutation representation graph corresponding to $G_0$ is a CPR graph.
The group $G_{0,5}\cong S_7\times A_5$ no matter how big $n$ is. 
The group $G_{3,4,5}\cong S_{n-9}$ as proven in the previous theorem (for $G_{34}$ in the previous theorem is the same group as $G_{3,4,5}$ here).
Similarly, $G_{0,4,5} \cong 2^2:S_3\times S_3$.
As $G_{3,4,5}\cap G_{0,4,5}=G_{0,3,4,5}$ independently on how big $n$ is, we can conclude by Proposition~\ref{arp} that $G_{4,5}$ is a string C-group.
Similarly, as $G_{0,5}\cap G_{4,5} = G_{0,4,5}$ no matter how big $n$ is, we can conclude by Proposition~\ref{arp} that $G_5$ is a string C-group.
Finally, as $G_0\cap G_5 =  G_{0,5}$ no matter how big $n$ is, we conclude that $G$ is a string C-group.

It remains to show that $G\cong A_n$. Similar arguments as in the proof of the previous two theorems lead to that conclusion.
The Schl\"afli type follows immediately from the permutation representation graph.
\end{proof}
Observe that this last family of string C-groups of rank 6 gives, using the same general construction we used in Theorems~\ref{evenr} and~\ref{1r}, a family of string C-groups of rank 5 with Schl\"afli type $\{n-10,6,5,3\}$. 

\begin{theorem}\label{35bis}
Let $n\equiv 3 \mod 4$ with $n\geq 15$.	
The group $A_n$ admits a string C-group representation of rank $5$, with Schl\"afli type $\{n-9,6,5,3\}$, with the following CPR-graph.
\begin{center}
\begin{tabular}{cc}
$$\xymatrix@-0.5pc{*+[o][F]{}  \ar@{-}[r]^2 &*+[o][F]{}  \ar@{-}[r]^1 &*+[o][F]{}  \ar@{-}[r]^2 &*+[o][F]{}  \ar@{-}[r]^1 &*+[o][F]{}  \ar@{.}[r] &*+[o][F]{}  \ar@{-}[r]^2 & *+[o][F]{}  \ar@{-}[r]^1 &*+[o][F]{}  \ar@{-}[r]^2 & *+[o][F]{}  \ar@{-}[r]^1 & *+[o][F]{}  \ar@{-}[r]^2 & *+[o][F]{}  \ar@{-}[r]^3 & *+[o][F]{_{}}  \ar@{-}[r]^{4}& *+[o][F]{_{}}  \ar@{-}[r]^{5} & *+[o][F]{}\\
& & &&&& & & & & & &*+[o][F]{} \ar@{-}[u]_{3} \ar@{-}[r]_5 & *+[o][F]{}  \ar@{-}[u]_{3}\ar@{-}[r]_4 & *+[o][F]{} \\
& & & &&&& & & & & &*+[o][F]{} \ar@{=}[u]_{2}^4 \ar@{-}[r]_5 & *+[o][F]{}  \ar@{-}[u]_{2}\ar@{-}[r]_4 & *+[o][F]{} \ar@{=}[u]^{2}\ar@{-}[u]_{3,5}
} $$

\end{tabular}
\end{center}
\end{theorem}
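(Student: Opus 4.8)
The plan is to follow the template of Theorems~\ref{35} and~\ref{36}, exploiting the fact that this CPR graph is obtained from the rank-$6$ graph of Theorem~\ref{36} by the general construction already used in Theorems~\ref{evenr} and~\ref{1r}: the leading alternating $0$--$1$ tail has been relabelled into an alternating $2$--$1$ tail, which amounts to absorbing the old generator $\rho_0$ into $\rho_2$ and discarding it, leaving the generating set $\{\rho_1,\ldots,\rho_5\}$. Everything to the right of the tail is unchanged, so the techniques and subgroups supported on that block transfer directly from Theorem~\ref{36}. I would check the intersection property through Proposition~\ref{arp}, applied with the end subgroups $G_1=\langle\rho_2,\rho_3,\rho_4,\rho_5\rangle$ and $G_5=\langle\rho_1,\rho_2,\rho_3,\rho_4\rangle$ (which here play the roles of $G_0$ and $G_{r-1}$), and then identify $G$ as $A_n$.

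First I would settle the two end subgroups. For $G_1$, deleting the $1$-edges disconnects the tail into pairs that are all interchanged by the single involution $\rho_2$, so $G_1$ has one orbit of bounded size (the right block together with the last two tail vertices) and a growing family of $2$-point orbits contributing at most a direct factor $C_2$; hence the isomorphism type of $G_1$ is independent of the tail length up to this factor, and it can be read off, and shown to be a string C-group, from the base case $n=15$ with {\sc Magma}. For $G_5$, deleting the $5$-edges splits the graph into two $G_5$-invariant components, one containing the whole $2$--$1$ tail and one of fixed size, so $G_5$ embeds in a direct product of two symmetric groups with only the first factor depending on $n$; I would identify it, exactly as in Proposition~\ref{Sym2(1)}, as a group of the form $(S_a\times S_b)^+$ and prove it is a string C-group by the inductive use of Proposition~\ref{arp} employed there and in Theorem~\ref{36}. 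The remaining intersection $G_1\cap G_5=G_{1,5}=\langle\rho_2,\rho_3,\rho_4\rangle$ follows from the usual suborbit comparison: any element of the intersection must have orbits refining those of both factors, and $G_{1,5}$ is the largest subgroup with that property. As this comparison does not see the length of the tail, it reduces to the {\sc Magma} check at $n=15$, and Proposition~\ref{arp} then gives that $G$ is a string C-group.

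To see that $G\cong A_n$ I would argue as in Theorems~\ref{34}--\ref{36}. The graph is connected, so $G$ is transitive; arguing as in the proof of Proposition~\ref{Sym2}, one fuses the orbits of the stabiliser of the leftmost vertex by suitable conjugates of a generator along the $2$--$1$ tail, obtaining $2$-transitivity and hence primitivity. One checks directly from the $2$--$3$ part of the graph that $(\rho_2\rho_3)^2$ is a single $3$-cycle, and it fixes at least three of the $n\geq 15$ points; Theorem~\ref{GJ} then gives $G\geq A_n$, and since every generator is even we conclude $G\cong A_n$. The Schl\"afli type $\{n-9,6,5,3\}$ is read directly off the graph. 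The step I expect to be the main obstacle is the intersection property: although the right-hand subgroups are inherited from Theorem~\ref{36}, the merged generator $\rho_2$ now acts simultaneously on the tail and on the right block, so one must ensure that $G_1\cap G_5$ picks up no extra elements from the lengthened tail. This is precisely where the $n$-independence of the relevant orbit structure must be justified rather than asserted, which is why the {\sc Magma} verification is carried out on the smallest instance and then propagated.
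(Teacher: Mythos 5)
Your overall plan---Proposition~\ref{arp} applied to the end subgroups $G_1=\langle\rho_2,\rho_3,\rho_4,\rho_5\rangle$ and $G_5=\langle\rho_1,\rho_2,\rho_3,\rho_4\rangle$, a \textsc{Magma} verification at $n=15$ propagated by $n$-independence of the relevant structures, and Theorem~\ref{GJ} for the identification of the group---is exactly the template the paper intends (it gives no proof of this theorem, explicitly leaving it to the reader as being very similar to Theorems~\ref{34}--\ref{36}), and your treatment of $G_1$ is essentially the sesqui-extension mechanism of Proposition~\ref{sesqui} and is fine. However, two of your concrete claims are false, and both are load-bearing. First, $G_5$ cannot be of the form $(S_a\times S_b)^+$. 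Deleting the $5$-edges does leave a component of size $n-5$ (the tail, the next three top-row vertices, and the leftmost middle- and bottom-row vertices) and a component of size $5$, but every generator of $G_5$ restricts to an \emph{even} permutation of each component separately: $\rho_3$ and $\rho_4$ are double transpositions on each component; $\rho_1$ is a product of $(n-11)/2$ transpositions, and $(n-11)/2$ is even because $n\equiv 3\pmod 4$; and $\rho_2$ acts on the large component as the $(n-9)/2$ tail $2$-edges together with the $2$-edge of the double edge, an even number of transpositions since $(n-9)/2$ is odd, and on the small component as two transpositions. Hence $G_5\leq A_{n-5}\times A_5$, which has index $2$ in $(S_{n-5}\times S_5)^+$, so the inductive identification borrowed from Proposition~\ref{Sym2(1)} (whose generators act as \emph{odd} permutations on the components) cannot be transplanted; this misidentification also changes the group $G_{1,5}$ that your suborbit comparison is supposed to produce.

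Second, $(\rho_2\rho_3)^2$ is not a single $3$-cycle. The $\{2,3\}$-labelled subgraph consists of isolated $2$-edges in the tail, one $\{2,3\}$-double edge, and \emph{three} paths of length two (one where the tail meets the top row, and two joining a top-row vertex to a middle-row and then a bottom-row vertex). Thus $\rho_2\rho_3$ has cycle type $2^k3^3$ and its square is a product of three disjoint $3$-cycles, which does not satisfy the hypothesis of Theorem~\ref{GJ} (a cycle of \emph{prime length} fixing at least three points), so Jones' theorem cannot be applied to it. The natural repair follows Theorem~\ref{34}: show that the restrictions of $G_5$ to its two components are the full alternating groups $A_{n-5}$ and $A_5$; simplicity of $A_{n-5}$ then rules out a proper subdirect product, giving $G_5\cong A_{n-5}\times A_5$, whence $G$ contains genuine $3$-cycles fixing at least three points. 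Combined with your $2$-transitivity argument this yields $G\geq A_n$ by Theorem~\ref{GJ}, and $G\cong A_n$ since all generators are even. As written, though, both the identification of $G_5$ and the application of Jones' theorem fail.
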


We leave the proof of this last theorem to the interested reader as it is very similar to the previous proofs.

\section{ Acknowledgements}

The authors thank Mark Mixer for observing that there was a mistake somewhere in the case where $n\equiv 3 \mod 4$ in a previous version of this paper.
This research was supported by the Portuguese
Foundation for Science and Technology (FCT- Funda\c c\~ao para a Ci\^encia e a Tecnologia),
through CIDMA - Center for Research and Development in Mathematics and
Applications, within project UID/MAT/04106/2013.


\bibliographystyle{amsplain}

\begin{thebibliography}{9}

\bibitem{magma}
W. Bosma, J. Cannon, C. Playoust,
\newblock The Magma algebra system I: The user language.
\newblock {\em J. Symbolic Comput.} 24:235--265, 1997.

\bibitem{Brooksbank:2015}
P.~A. Brooksbank and D.~Leemans.
\newblock Polytopes of large rank for $\mathrm{PSL}(4,q)$.
\newblock {\em J. Algebra} 452:390--400, 2016.

\bibitem{Brooksbank:2018}
P.~A. Brooksbank, J.~T.~Ferrrara and D.~Leemans.
\newblock Orthogonal groups in characteristic 2 acting on polytopes of high rank.
\newblock Preprint, 2018.

\bibitem{Brooksbank:2010}
P.~A. Brooksbank and D.~A. Vicinsky.
\newblock Three-dimensional classical groups acting on polytopes.
\newblock {\em Discrete Comput. Geom.}, 44(3):654--659, 2010.

\bibitem{CC}
P.~J. Cameron and P.~Cara.
\newblock Independent generating sets and geometries for symmetric groups.
\newblock {\em J. Algebra} 258(2):641--650, 2002.

\bibitem{an}
P.~J. Cameron, M.-E.~Fernandes, Dimitri~Leemans and Mark~Mixer.
\newblock {\em Highest rank of a polytope for $A_n$}
\newblock Proc. London Math. Soc. 115:135--176, 2017.

\bibitem{Con80}
 M. Conder.
\newblock {\em Generators for alternating and symmetric groups.}
\newblock J. London Math. Soc. 22(2):75-86, 1980.

\bibitem{Con81}
M. Conder.
\newblock {\em More on generators for alternating and symmetric groups.}
\newblock Quart. J. Math. (Oxford) Ser.2  32:137-163, 1981.

\bibitem{CO}
M.~Conder and D.~Oliveros.
\newblock {\em The intersection condition for regular polytopes.}
\newblock {\em J. Combin. Theory Ser. A}, 120:1291--1304 2013.

\bibitem{DiJuTho}
T.~Connor, J.~De Saedeleer and D.~Leemans.
\newblock Almost simple groups with socle $\mathop{\mathrm{PSL}}(2,q)$ acting on abstract regular polytopes.
\newblock {\em J. Algebra} 423:550--558, 2015.

\bibitem{CLM2012}
T.~Connor, D.~Leemans, and M.~Mixer.
\newblock Abstract regular polytopes for the {O'N}an group.
\newblock {\em Int. J. Alg. Comput.} 24(1):59--68, 2014.


\bibitem{fl}
M.~E. Fernandes and D. Leemans,
Polytopes of high rank for the symmetric groups,
\textit{Adv. Math.} 228:3207--3222, 2011.

\bibitem{flm}
M.~E. Fernandes, D.~Leemans, and M.~Mixer.
\newblock {\em Polytopes of high rank for the alternating groups.}
\newblock {\em J. Combin. Theory Ser. A}, 119:42--56, 2012.

\bibitem{flm2}
M.~E. Fernandes, D.~Leemans, and M.~Mixer.
\newblock All alternating groups {$A_n$} with {$n\geq12$} have polytopes of
  rank {$\lfloor\frac{n-1}{2}\rfloor$}.
\newblock {\em SIAM J. Discrete Math.}, 26(2):482--498, 2012.

\bibitem{extension}
M.~E. Fernandes, D.~Leemans and M.~Mixer. 
\newblock {\em An extension of the classification of high rank regular polytopes}.
\newblock Trans. Amer. Math. Soc., 370:8833--8857, 2018.

\bibitem{Halg}
M.~I. Hartley.
\newblock An atlas of small regular abstract polytopes.
\newblock {\em Period. Math. Hungar.}, 53(1-2):149--156, 2006.

\bibitem{HHalg}
M.~I. Hartley and A. Hulpke.
\newblock Polytopes derived from sporadic simple groups.
\newblock {\em Contrib. Discrete Math.}, 5(2):106--118, 2010.

\bibitem{HL}
M.~I. Hartley and D.~Leemans.
\newblock A new {P}etrie-like construction for abstract polytopes.
\newblock {\em J. Combin. Theory Ser. A}, 115(6):997--1007, 2008.

\bibitem{GJ}
G. Jones.
\newblock Primitive permutation groups containing a cycle.
\newblock {\em Bull. Australian Math. Soc.}, 89(1):159--165, 2014.

\bibitem{Leemans:2006}
D.~Leemans.
\newblock Almost simple groups of {S}uzuki type acting on polytopes.
\newblock {\em Proc. Amer. Math. Soc.}, 134(12):3649--3651 (electronic), 2006.

\bibitem{LMalg} 
D.~Leemans and M.~Mixer.
\newblock Algorithms for classifying regular polytopes with a fixed
  automorphism group.
\newblock {\em Contr. Discrete Math.}, 7(2):105--118, 2012.

\bibitem{ls07}
D.~Leemans and E.~Schulte.
\newblock Groups of type {$L_2(q)$} acting on polytopes.
\newblock {\em Adv. Geom.}, 7(4):529--539, 2007.

\bibitem{ls09}
D.~Leemans and E.~Schulte.
\newblock Polytopes with groups of type {${\rm PGL}_2(q)$}.
\newblock {\em Ars Math. Contemp.}, 2(2):163--171, 2009.

\bibitem{Leemans:2015}
D.~Leemans, E.~Schulte and H. Van Maldeghem.
\newblock Groups of {R}ee type in characteristic 3 acting on polytopes
\newblock {\em Ars Math. Contemp.}, 14:209--226, 2018.

\bibitem{atlasl}
D. Leemans and L. Vauthier.
\newblock {\em An atlas of abstract regular polytopes for small groups,}
\newblock   http://homepages.ulb.ac.be/$\sim$dleemans/polytopes/.
\newblock Aequationes Math., 72(3):313--320, 2006.

\bibitem{arp}
P.~McMullen and E.~Schulte.
\newblock {\em Abstract Regular Polytopes}, volume~92 of Encyclopedia of
  Mathematics and its Applications.
\newblock Cambridge University Press, Cambridge, 2002.

\bibitem{pcpr}
D. Pellicer.
\newblock {\em C{PR} graphs and regular polytopes.}
\newblock European J. Combin., 29(1):59--71, 2008.

\bibitem{SC94}
D.~Sjerve and M.~Cherkassoff.
\newblock On groups generated by three involutions, two of which commute.
\newblock In {\em The Hilton Symposium 1993 (Montreal, PQ)}, volume~6 of {\em
  CRM Proc. Lecture Notes}, pages 169--185. Amer. Math. Soc., Providence, RI,
  1994.

\end{thebibliography}

\end{document}